\documentclass[reqno,11pt]{amsproc}
\usepackage{amssymb}
\usepackage{amsmath}
\usepackage{amsthm}
\usepackage{amsfonts}
\usepackage{microtype}
\usepackage[canadian]{babel}
\usepackage{xcolor}
\usepackage{geometry}
\usepackage{tikz-cd}
\usepackage{enumitem}
\usepackage{ifthen}
\usepackage{hyphenat}
\usepackage{mathtools}

\definecolor{myurlcolor}{rgb}{0,0,0.3}
\definecolor{mycitecolor}{rgb}{0,0.3,0}
\definecolor{myrefcolor}{rgb}{0.3,0,0}
\usepackage[pagebackref,draft=false]{hyperref}
\hypersetup{colorlinks,
linkcolor=myrefcolor,
citecolor=mycitecolor,
urlcolor=myurlcolor}

\usepackage[capitalize]{cleveref}
\usepackage{bookmark}

\newcommand{\beq}{\begin{equation}}
\newcommand{\eeq}{\end{equation}}
\newcommand{\N}{\mathbb{N}}

\newcommand{\Nplus}{\mathbb{N}_{> 0}}			
\newcommand{\Z}{\mathbb{Z}}
\newcommand{\Q}{\mathbb{Q}}
\newcommand{\Qplus}{\mathbb{Q}_{> 0}}			
\newcommand{\R}{\mathbb{R}}
\newcommand{\Rplus}{\mathbb{R}_{> 0}}			
\newcommand{\TR}{\mathbb{TR}}
\newcommand{\TZ}{\mathbb{TZ}}
\newcommand{\B}{\mathbb{B}}				

\newcommand{\op}{\mathrm{op}}
\newcommand{\eps}{\varepsilon}

\newcommand{\Sper}[1]{\mathsf{TSper}(#1)}		

\newcommand{\lev}{\mathrm{lev}}			
\DeclareMathOperator{\Frac}{\mathsf{Frac}}	
\DeclareMathOperator{\Newton}{\mathsf{Newton}}	
\DeclareMathOperator{\supp}{\mathsf{supp}}	


\hyphenation{Archi-me-dean}

\theoremstyle{plain}
\makeatletter
\newtheorem*{rep@theorem}{\rep@title}
\newcommand{\newreptheorem}[2]{%
\newenvironment{rep#1}[1]{%
 \def\rep@title{#2 \ref{##1}}%
 \begin{rep@theorem}}%
 {\end{rep@theorem}}}
\makeatother

\swapnumbers
\newtheorem{dummy}{Dummy}[section]
\newtheorem{thm}[dummy]{Theorem}\Crefname{thm}{Theorem}{Theorems}
\newreptheorem{thm}{Theorem}
\newtheorem{lem}[dummy]{Lemma}\Crefname{lem}{Lemma}{Lemmas}
\newtheorem{prop}[dummy]{Proposition}\Crefname{prop}{Proposition}{Propositions}
\newtheorem{cor}[dummy]{Corollary}\Crefname{cor}{Corollary}{Corollaries}
\Crefname{conj}{Conjecture}{Conjectures}
\Crefname{qstn}{Question}{Questions}
\newtheorem{defn}[dummy]{Definition}\Crefname{defn}{Definition}{Definitions}
\newtheorem{prob}[dummy]{Problem}\Crefname{prob}{Problem}{Problems}
\newtheorem{nota}[dummy]{Notation}\Crefname{nota}{Notation}{Notations}
\theoremstyle{remark}
\newtheorem{ex}[dummy]{Example}\Crefname{ex}{Example}{Examples}
\newtheorem{rem}[dummy]{Remark}\Crefname{rem}{Remark}{Remarks}
\Crefname{note}{Note}{Notes}
\numberwithin{equation}{section}
\Crefname{enumi}{}{}

\allowdisplaybreaks

\setlength{\jot}{6pt}

\usepackage{enumitem}
\setlist[enumerate]{label=(\alph*),itemsep=5pt,topsep=8pt}
\setlist[itemize]{label=$\triangleright$,itemsep=5pt,topsep=6pt}

\Crefformat{enumi}{#2#1#3}

\let\originalleft\left
\let\originalright\right
\renewcommand{\left}{\mathopen{}\mathclose\bgroup\originalleft}
\renewcommand{\right}{\aftergroup\egroup\originalright}

\usepackage[explicit]{titlesec}

\titleformat{\section}[block]{\bfseries\large\filcenter}{\thesection.}{6pt}{#1}
\titlespacing{\section}{0pt}{18pt}{12pt}
\titleformat{\subsection}[runin]{\bfseries}{\noindent}{0.4em}{#1.}

\usepackage{titletoc}
\titlecontents{part}[0em]
	{\vspace{1pc}}
        {\bfseries\normalsize\contentslabel[\thecontentslabel]{2em}}
	{\bfseries\underline}
	{}
\titlecontents{section}[2em]
	{\vspace{0pt}}
        {\normalfont\normalsize\contentslabel[\thecontentslabel]{2em}}
	{}
	{\titlerule*[.75em]{}\contentspage}

\setcounter{tocdepth}{1}

\newcommand{\newterm}[1]{\textbf{#1}}



\title[Abstract Vergleichsstellens\"atze I]{Abstract Vergleichsstellens\"atze\\ for preordered semifields and semirings I}

\author{Tobias Fritz}

\address{Department of Mathematics, University of Innsbruck}
\email{tobias.fritz@uibk.ac.at}

\keywords{}

\subjclass[2020]{Primary: 06F25; Secondary: 16W80, 16Y60, 12K10, 14P10}

\thanks{\textit{Acknowledgements.}
	We thank Erkka Theodor Haapasalo, Xiaosheng Mu, Tim Netzer and P\'eter Vrana for useful feedback, discussion and suggestions. We also thank Luciano Pomatto, Markus Schweighofer and Omer Tamuz for further discussions.}

\begin{document}

\begin{abstract}
	Real algebra is usually thought of as the study of certain kinds of preorders on fields and rings. Among its core themes are the separation theorems known as \emph{Positivstellens\"atze}.
	However, there is a nascent subfield of real algebra which studies preordered semirings and semifields, which is motivated by applications to probability, graph theory and theoretical computer science, among others.
	Here, we contribute to this subfield by developing a number of foundational results for it, with two abstract \emph{Vergleichsstellens\"atze} being our main theorems.

	Our first Vergleichsstellensatz states that every semifield preorder is the intersection of its total extensions. We apply this to derive our second main result, a Vergleichsstellensatz for certain non-Archimedean preordered semirings in which the homomorphisms to the tropical reals play an important role. We show how this result recovers the existing Vergleichsstellensatz of Strassen and (through the latter) the classical Positivstellensatz of Krivine--Kadison--Dubois.
\end{abstract}

\maketitle

\tableofcontents


\section{Introduction}

Traditionally, real algebra is considered to be the study of ordered fields and ordered rings, broadly construed. Among its central results are a number of separation theorems for such structures known as \emph{Positivstellens\"atze} (and some \emph{Nichtnegativstellens\"atze}).
However, recently there has been increasing interest in similar separation theorems for \newterm{preordered semirings}, in which negative inverses need not exist.\footnote{We assume throughout that the multiplication is commutative. Detailed definitions of all relevant concepts will be given in the main text.} This includes the works of Zuiddam~\cite{zuiddam_thesis}, Vrana~\cite{vrana} and ourselves~\cite{our_spss}. All of these extend an earlier separation theorem for preordered semirings proven by Strassen~\cite{strassen}, which he had developed jointly with a surprising application to the complexity of matrix multiplication, a major open problem in theoretical computer science. The mentioned recent works by Zuiddam, Vrana and ourselves have been motivated by similarly surprising applications to graph theory~\cite{zuiddam,vrana_noncommutative,vrana_probabilistic}, quantum information theory~\cite{PVW_dichotomies} and the complexity theory of tensors in general. We refer to~\cite{WZ} for a recent survey.

Motivated by these developments, the goal of the present paper and its sequel~\cite{partII} is to develop a number of foundational results for the theory of preordered semirings. These results have already found further applications in representation theory~\cite{rep_app}, probability~\cite{arw} and statistics~\cite{major}.
Our first foundational result is a separation theorem for \newterm{preordered semifields}:

\begin{repthm}{abstract_semifield_pss}
	Let $F$ be a preordered semifield. Then its preorder $\le$ is the intersection of all total semifield preorders on $F$ which extend it.
\end{repthm}

By analogy with Positivstellens\"atze and Nichtnegativstellens\"atze, we call this kind of theorem a \newterm{Vergleichsstellensatz} after the German ``\emph{vergleichen}'' (to compare), since it is concerned with the order relations between elements of $F$. Unlike in the case of preordered rings or fields, where the term \emph{Positivstellensatz} applies since the preorder is characterized by its positive cone, a preordered semifield or semiring does not permit the preorder to be characterized through a positive cone. Therefore a separation theorem for these kinds of structures naturally take the form of a Vergleichsstellensatz. For example, \emph{Strassen's Vergleichsstellensatz} refers to the separation theorem of Strassen mentioned above~\cite[Corollary~2.6]{strassen}.

\Cref{abstract_semifield_pss} is analogous to the classical theorem stating that the preorder on a preordered field is the intersection of all the total preorders which extend it. 
Despite the simplicity of the theorem statement, and although our proof is conceptually analogous to the classical one, there are technical intricacies that make \Cref{abstract_semifield_pss} an apparently deeper result than the classical one.
We will conduct the proof in \Cref{semifields} based on a peculiar polynomial identity (namely \Cref{curiouslem}).
It is also worth noting that \Cref{abstract_semifield_pss} is by no means a direct consequence of the classical result, as one can see by noting that a typical semifield does not embed into a field (even if addition is cancellative, see \Cref{function_semifield}).

A very basic result of real algebra is that every Archimedean ordered field embeds into $\R$. One of our easier results is a semifield analogue of this statement:

\begin{repthm}{real_or_tropical}
	Let $F$ be a totally preordered semifield. Then $F$ is multiplicatively Archimedean if and only if it order embeds into one of the following preordered semifields:
	\[
		\R_+, \qquad \R_+^\op, \qquad \TR_+, \qquad \TR_+^\op.
	\]
\end{repthm}

Here, $\TR_+$ denotes the semifield of tropical reals, and the exponent $^\op$ stands for reversing the preorder. (Reversing the preorder on any preordered semiring in our sense produces another preordered semiring.) For this result, our proof is not significantly more difficult than the classical one for $\R$.

\medskip

Before we state our second main result---a Vergleichsstellensatz for preordered semirings---it seems pertinent to give some more detailed motivation for the development of the approach to real algebra presented here.
We explain such motivations in the following two subsections.
The first argues for the intrinsic utility of preordered semifields in the context of real algebra. The second one provides evidence for the ubiquity of preordered semirings and for the rich diversity of situations in which Vergleichsstellens\"atze for preordered semirings can be expected to be applied.

\subsection*{The utility of preordered semifields}

We believe that preordered semifields deserve to be considered among the most fundamental objects of real algebra. The reason for this belief is that there are several distinct benefits to working with semifields rather than fields:

\begin{enumerate}
	\item Evaluation maps on a semifield of rational functions are typically still homomorphisms. For example, let $\R_+[X]$ denote the semiring of single-variable polynomials in one variable with nonnegative coefficients, and let $\R_+(X)$ be the resulting semifield of fractions. This is the semifield of rational functions with coefficients in $\R_+$. Then evaluating at any positive real number defines a semiring homomorphism $\R_+(X) \to \R_+$. Of course, the same does not work at the field level: there is no ring homomorphism $\R(X) \to \R$ at all.
	\item \Cref{abstract_semifield_pss} immediately implies that every \emph{strict} semifield, i.e.~every semifield that is not a field, can be totally preordered. This is in stark contrast to the situation for fields, where a total preorder exists if and only if $-1$ is not a sum of squares.
	\item Again in contrast to fields, semifields often have homomorphisms to the tropical reals $\TR_+$. These compactify the space of homomorphisms to $\R_+$ in many non-Archimedean situations. (See \Cref{chaus}.)
	\item Semifields can combine the convenience of invertibility with aspects of the fine structure often present in local rings, such as the existence of nilpotent elements. For example, let $\R_{(+)}[X]/(X^2)$ be the semiring of real linear functions $r + sX$ with strictly positive constant coefficient $r > 0$, modulo discarding quadratic terms in the formation of products. Then this semiring is a semifield with $(r + sX)^{-1} = r^{-2}(r - sX)$. At the same time, this semiring is the ``positive part'' of the ring of dual numbers $\R[X]/(X^2)$, which is a local ring with a nonzero nilpotent ideal.
	\item In every strict semifield, the set of nonzero elements is closed not only under multiplication, but also under addition (\Cref{strict_semifield}). Hence we can form arbitrary expressions involving nonzero elements, and we do not need to check for invertibility when inverting such an expression. This will be convenient in our proof of \Cref{abstract_semifield_pss} in \Cref{semifields}.
	\item Using semiring homomorphisms as the morphisms of semifields, the category of strict semifields has categorical products: if $F_1$ and $F_2$ are semifields, then their categorical product $F_1 \times F_2$ has underlying set given by a mere subset of the cartesian product, namely
		\[
			(F_1^\times \times F_2^\times) \: \cup \: \{(0,0)\},
		\]
	with the componentwise algebraic structure. There also is a terminal object given by the Boolean semifield $\B$. These observations suggest that the category of strict semifields is much better behaved and more interesting than the category of fields.
\end{enumerate}

Of course, there also are additional technical difficulties posed by working with semifields rather than fields, some of which are related to these advantages. For example, we do not expect there to be a theory of algebraic extensions of semifields that could mirror the elegance and power of Galois theory. But for the purposes of real algebra, we believe that the advantages clearly outweigh the disadvantages. We have therefore come to consider semifields to be our preferred structures for thinking about some of the standard problems that appear in real algebra.

\subsection*{The ubiquity of preordered semirings}

Since semirings are not usually considered in the context of real algebra, we give some further independent motivation for them from a more applied perspective.

Ordered algebraic structures occur in manifold ways throughout mathematics. For example, a very common type of question in all of mathematics is of the following form: does a given object $X$ contain an isomorphic copy of another object $Y$? Similarly, does $X$ have a quotient which is isomorphic to $Y$? Just to give one concrete example where this type of problem occurs, consider the classical problem in representation theory of determining the multiplicity with which some irreducible representation $Y$ occurs in a given representation $X$, say for finite-dimensional complex representations of a fixed compact Lie group. So this asks: what is the largest $n \in \N$ for which the representation $\oplus_{i=1}^n Y$ is a subobject of the representation $X$? Along similar lines, we may ask: is there a third representation $Z$ such that $X \otimes Z$ becomes a subrepresentation of $Y \otimes Z$? Or is there $n \in \N$ such that $X^{\otimes n}$ is a subrepresentation of $Y^{\otimes n}$? These are difficult questions in representation theory. They are quite obviously concerned with the semiring of isomorphism classes of representations, using direct sum as addition, tensor product as multiplication, preordered with respect to inclusion of representations. Vergleichsstellens\"atze for preordered semirings naturally apply here, and they can yield insight that goes far beyond what may be accessible by representation-theoretic methods only.

Of course, the idea of the previous paragraph is not at all specific to representation theory, and we expect that it can be applied in many different ways throughout mathematics.
Generally speaking, the set (or proper class) of isomorphism classes of mathematical objects of a given type forms a preordered semiring as soon as there are notions of ``direct sum'' and ``tensor product'', such that the latter distributes over the former (up to isomorphism), and such that the relevant notion of subobject or quotient object interacts well with these direct sums and tensor products. For example, this is the case for the category of finite-dimensional complex representations of a compact Lie group.
Another case of this type is a certain preordered semiring of graphs, to which Strassen's Vergleichsstellensatz has recently been applied~\cite{zuiddam,mine_graphs}. This has led to some progress in understanding the \emph{Shannon capacity of graphs}, a graph invariant notorious for being difficult to compute.

We aim at developing applications such as these in the future. However, the present work focuses on the real algebra of preordered semifields and preordered semirings itself.

\medskip

We now state our second main result, a new Vergleichsstellensatz for preordered semirings.
The interested reader may keep the above preordered semiring of representations in mind, and notice how the inequalities in \ref{intro_asymp} and \ref{intro_cat} are exactly such that they provide answers to the questions raised above in terms of the inequalities~\ref{intro_phi}.

\begin{repthm}{simpler1}
	Let $S$ be a preordered semiring with $1 \ge 0$ and a power universal element $u$, and let $x, y \in S$ nonzero.
	Then the following are equivalent:
	\begin{enumerate}
		\item\label{intro_phi} $\phi(x) \le \phi(y)$ for all monotone homomorphisms $\phi : S \to \R_+$ or $\phi : S \to \TR_+$.
		\item For every $\eps > 0$, we have
			\[
				x^n \le u^{\lfloor \eps n \rfloor} y^n
			\]
			for all $n \gg 1$.
	\end{enumerate}
	Moreover, suppose that $\phi(x) < \phi(y)$ for all such $\phi$. Then also the following hold:
	\begin{enumerate}[resume]
		\item There is $k \in \N$ such that
			\[
				u^k x^n \le u^k y^n \qquad \forall n \gg 1.
			\]
		\item\label{intro_asymp} If $y$ is power universal as well, then
			\[
				x^n \le y^n \qquad \forall n \gg 1.
			\]
		\item\label{intro_cat} There is nonzero $a \in S$ such that
			\[
				a x \le a y.
			\]
			Moreover, there is $k \in \N$ such that $a \coloneqq u^k \sum_{j=0}^n x^j y^{n-j}$ does the job for any $n \gg 1$.
	\end{enumerate}
\end{repthm}

Here, the assumption that a power universal element $u$ exists amounts to a certain type of \newterm{polynomial growth}: every element must be dominated by a polynomial in $u$. This is a substantial generalization of Archimedeanicity in the usual sense of real algebra.
A genuinely non-Archimedean instance of this theorem is its application to the semiring of Laurent polynomials with nonnegative coefficients (\Cref{ex_laurent}).
On the other hand, specializing to the Archimedean case corresponds to taking $u \coloneqq 2$, in which case the assumption is that every nonzero element $x \in S$ is both lower bounded and upper bounded by a constant. In this case, \Cref{simpler1} specializes to Strassen's Vergleichsstellensatz (\Cref{spss}). We also prove that Strassen's Vergleichsstellensatz in turn implies the classical Positivstellensatz of Krivine--Kadison--Dubois (\Cref{kkd}), which is therefore also contained in \Cref{simpler1}.

\subsection*{Summary}

We briefly summarize the content of each section.

\begin{itemize}
	\item \Cref{semistuff} recalls semirings, semifields and semidomains and gives a few examples in the spirit of our results. \Cref{strict_semifield} is among the core basic observations, stating that a semifield is either a field or zerosumfree. \Cref{quasicompl_defn} introduces a new Archimedeanicity property for semirings.
	\item \Cref{preorderstuff} starts gently by recalling basic order-theoretic definitions, mainly to establish our terminology and notation.
		We then introduce preordered semirings and preordered semifields together with some of their relevant properties. The concept of polynomial growth from \Cref{univ_defn} is a central idea here, as are the preordered semifields of fractions from \Cref{fractions_ordered}.
	\item \Cref{malt_total} defines \emph{multiplicatively Archimedean} preordered semifields, and shows that these all embed into the nonnegative reals $\R_+$ or into the tropical reals $\TR_+$ or their opposites (\Cref{real_or_tropical}). We then use this result to show that there are exactly eight Dedekind complete multiplicatively Archimedean preordered semifields (\Cref{dedekind_class}). This is a semifield analogue of the elementary fact that the real numbers are the only Dedekind complete Archimedean ordered field.

	\item The short \Cref{curioussec} states and proves a peculiar polynomial identity that may be of some independent interest.
	\item \Cref{semifields} uses this polynomial identity to prove \Cref{abstract_semifield_pss}, our abstract Vergleichsstellensatz for preordered semifields, stating that the preorder is the intersection of all its total preorder extensions.
	\item \Cref{n1} uses this result to derive a new Vergleichsstellensatz for preordered semirings (\Cref{simpler1}). The conclusion involving a comparison between large powers relies on our study of a version of the real spectrum for preordered semirings, proving that it is compact Hausdorff (\Cref{chaus}).

		We then develop some consequences of this result, including an extension theorem for monotone homomorphisms (\Cref{extensionthm}), as well as rederivations of existing separation theorems. The latter comprises Strassen's Vergleichsstellensatz, our generalization of it from~\cite{our_spss}, and the classical Positivstellensatz of Krivine--Kadison--Dubois.
\end{itemize}

We note that the material in this paper is largely self-contained and does not require any prior knowledge of real algebra or the theory of semirings, although clearly some familiarity with both will facilitate easier reading.

An important caveat with our definition of multiplicative Archimedeanicity is that the term ``Archimedean'' is used in a sense which matches its standard usage in functional analysis and the theory of ordered abelian groups, \emph{not} in a way which matches its standard usage in real algebra.

\subsection*{Our conventions}

The following conventions, mainly concerning our notation, may be relevant to keep in mind while reading.

\begin{itemize}
	\item All of our algebraic structures, like rings and semirings, are assumed commutative, and we omit further explicit mention of this assumption.
	\item In order to minimize clutter, we frequently state the assumptions on the object under investigation at the beginning of a section or subsection. In those cases, we do not repeat those assumptions in the statement of definitions and theorems except in our main results. Thus when these assumptions are unclear to the reader, taking a look at the beginning of the section or subsection should help.
	\item We usually denote a generic semiring by ``$S$'' (or ``$T$'' when we refer to a second one), and a generic semifield by ``$F$''.
\end{itemize}

\section{Semirings, semifields and semidomains}
\label{semistuff}

We begin by recalling some standard material on semirings, referring to~\cite{golan} for a detailed textbook treatment.

\subsection*{Semirings and homomorphisms}

We give a brief recap of the relevant definitions, emphasizing that we assume commutativity of multiplication without further mention.

\begin{defn}
	A \newterm{semiring} $S$ is a set together with binary operations
	\[
		+ , \, \cdot \: : \: S \times S \longrightarrow S,
	\]
	respectively called \emph{addition} and \emph{multiplication}, and elements $0,1 \in S$ such that both $(S,+,0)$ and $(S,\cdot,1)$ are commutative monoids, and such that multiplication distributes over addition.
\end{defn}

Thus a semiring is like a ring, except in that additive inverses need not exist.

\begin{ex}
	$\N$ with its usual algebraic structures is arguably the simplest interesting example of a semiring. It has no proper subsemirings.
\end{ex}

\begin{ex}
	\label{sos}
	Let $R$ be a ring. Then the set of sums of squares in $R$ is a semiring $\Sigma_R$, since it is closed under addition and multiplication and contains the two neutral elements.

	One can consider instead sums of squares which are ``strictly positive'' in the sense of being elements of the set
	\beq
		\label{sigmaR}
		\Sigma_{+,R} \coloneqq \left\{ 1 + \sum_i x_i^2 \:\bigg|\: x_i \in R \right\} \cup \{0\},
	\eeq
	Then $\Sigma_{+,R}$ is again a semiring.
	In some cases, this modified definition may be preferable since the \emph{polynomial growth} condition of \Cref{univ_defn} may hold for $\Sigma_{+,R}$ but not for $\Sigma_R$.

	Similarly if $A$ is an algebra (over $\R$), then it may be of interest to consider the subsemiring
	\beq
		\label{sigma}
		\Sigma_{+,A} \coloneqq \left\{ r + \sum_i x_i^2 \:\bigg|\: r > 0, \: x_i \in A \right\} \cup \{0\}.
	\eeq
	For example if $A = \R[X_1,\ldots,X_d]$, then this is the semiring of sums of squares polynomials plus a strictly positive constant, together with the zero polynomial.
\end{ex}

In those semirings that are of primary interest to us, no nonzero element has an additive inverse:

\begin{defn}[{\cite[p.~4]{golan}}]
	A semiring $S$ is \newterm{zerosumfree} if for all $x, y \in S$,
	\[
		x + y = 0 \quad \Longrightarrow \quad x = 0 \quad \land \quad y = 0.
	\]
\end{defn}

Equivalently, a semiring is zerosumfree if the set of nonzero elements is closed under addition.

There is a category of semirings and semiring homomorphisms, where the latter are defined in the obvious way as follows.

\begin{defn}
	If $S$ and $T$ are semirings, then a \newterm{semiring homomorphism} from $S$ to $T$ is a map $f : S \to T$ which preserves addition and multiplication, i.e.
	\[
		f(x + y) = f(x) + f(y), \qquad f(xy) = f(x) f(y)
	\]
	for all $x,y \in S$, as well as the neutral elements, $f(0) = 0$ and $f(1) = 1$.
\end{defn}

\begin{ex}
	\label{free_semiring}
	Let $\N[\underline{X}] = \N[X_1,\ldots,X_d]$ be the polynomial semiring in $d$ variables with natural number coefficients.
	Then for any semiring $S$, the homomorphisms $\N[\underline{X}] \to S$ are in bijection with the $d$-tuples $x \in S^d$, where the homomorphism associated to $x \in S^d$ maps a polynomial $p \in \N[\underline{X}]$ to its evaluation $p(x) \in S$.
	Hence $\N[\underline{X}]$ is the free semiring on $d$ generators.
\end{ex}

\subsection*{Semifields and semidomains}

As one would expect, there also is a ``semi-'' version of fields, which similarly are ``fields without negatives''.

\begin{defn}
	\label{semifield_defn}
	A semiring $S$ is a \newterm{semifield} if $S^\times = S \setminus \{0\}$, i.e.~every nonzero element is invertible and in addition $1 \neq 0$.
\end{defn}

We typically denote a semifield by $F$.

\begin{ex}
	\label{basic_semifields}
	Clearly every field is a semifield.
	The most basic examples of semifields that are not fields are the following:
	\begin{itemize} 
		\item The \newterm{rational semifield} $\Q_+$ and the \newterm{real semifield} $\R_+$, where the subscript indicates that these only contain the respective nonnegative numbers, and considering both sets equipped with their usual algebraic operations.
		\item The \newterm{Boolean semifield} $\B \coloneqq \{0,1\}$ with $1 + 1 = 1$, where the rest of the structure is determined uniquely by the semiring axioms.
		\item The \newterm{tropical semifield} $\TR_+ \coloneqq (\R_+,\max,\cdot)$, also known as the \newterm{tropical reals}. This semifield plays a central role in our upcoming results. Note that
			\[
				\log \: : \: \R_+ \longrightarrow \R \cup \{-\infty\}
			\]
			establishes an isomorphism between $\TR_+$ and $(\R\cup\{-\infty\},\max,+)$. We will try to leave it open as much as possible whether the tropical reals are considered in the \newterm{multiplicative picture} as $(\R_+,\max,\cdot)$ or in the \newterm{additive picture} as $(\R\cup\{-\infty\},\max,+)$. But sometimes a concrete choice needs to be made, and which choice is more convenient depend on the particular context. We will explain our choice whenever it matters.
		\item In the additive picture, the tropical semifield contains the subsemifield $\TZ_+ \coloneqq (\Z \cup \{-\infty\},\max,+)$.
	\end{itemize}
	There are some canonical homomorphisms between these semifields:
	\[
		\Q_+ \hookrightarrow \R_+ \twoheadrightarrow \B \hookrightarrow \TZ_+ \hookrightarrow \TR_+ \twoheadrightarrow \B,
	\]
	where $\hookrightarrow$ denotes an obvious inclusion, while $\twoheadrightarrow$ denotes the homomorphisms mapping every nonzero element to $1 \in \B$. 
\end{ex}

While every homomorphism of fields is injective, the same does not apply in the semifield case, as witnessed by the homomorphisms $\R_+ \twoheadrightarrow \B$ and $\TR_+ \twoheadrightarrow \B$.

\begin{ex}
	\label{newtonRplus}
	Let $\N[\underline{X}] = \N[X_1,\ldots,X_d]$ be the polynomial semiring. Then the homomorphisms $\N[\underline{X}] \to \TR_+$ correspond to the $d$-tuples $\alpha \in \TR_+^d$ by \Cref{free_semiring}. Using the additive picture of $\TR_+$ and a multi-index $n = (n_1,\ldots,n_d) \in \N^d$, we have
	\[
		\sum_n r_n \underline{X}^n \longmapsto \max_{n \: : \: r_n > 0} \sum_i \alpha_i n_i.
	\]
	This is exactly the maximal value of the linear function $n \mapsto \sum_i \alpha_i n_i$ over the \newterm{Newton polytope} of the given polynomial. Using the fact that the only homomorphism $\R_+ \to \TR_+$ is the one that factors through $\B$, it follows that the same holds for polynomials with coefficients in $\R_+$. Thus for both $\N$ and $\R_+$ coefficients, considering the value of a polynomial $p$ under all homomorphisms to $\TR_+$ detects the Newton polytope of $p$.
\end{ex}

\begin{ex}
	\label{sosvals}
	On a related note, for any semiring $S$ the homomorphisms $S \to \TR_+$ and $S \to \TZ_+$ bear some similarity to valuations on fields. For example if $\Sigma \subseteq \R[\underline{X}]$ is the semiring of sums of squares polynomials, then the map $\Sigma \to \TZ_+$ which assigns to a polynomial the negative of its degree of vanishing at a given point (or variety) is a homomorphism (using $\TZ_+$ in the additive picture).
\end{ex}

The next two examples of semifields are paradigmatic for our upcoming developments, in the sense that they provide good examples that the reader may want to keep in mind throughout the paper.

\begin{ex}
	\label{function_semifield}
	Let $X$ be a nonempty topological space, and $C(X)_{>0}$ the set of continuous functions $X \to \Rplus$. Then
	\[
		F \coloneqq C(X)_{>0} \cup \{0\}
	\]
	is a semifield with respect to pointwise addition and multiplication. If $X$ is not connected, then there is $f \in F$ with $f^2 + 2 = 3f$ with $f \neq 1,2$, namely a nonconstant function taking the values $1$ and $2$.
	This implies that $F$ does not embed into a field.
\end{ex}

Local rings often contain relatively large semifields as subsemirings, as in the following example.

\begin{ex}
	Let
	\[
		\R_{(+)}[[X]] \coloneqq \left\{ \sum_{i \in \N} r_i X^i \in \R[[X]] \:\Bigg|\: r_0 > 0 \right\} \cup \{0\}
	\]
	be the set of of formal power series with strictly positive constant coefficient together with zero. Since this set of power series is closed under addition and multiplication, and since every nonzero power series with strictly positive constant coefficient has an inverse with strictly positive constant coefficient, we are again dealing with a semifield. This works just as well with the positive cone of any ordered field in place of $\R_+$.
\end{ex}

\begin{lem}
	\label{strict_semifield}
	For a semifield $F$, the following are equivalent:
	\begin{enumerate}[label=(\roman*)]
		\item\label{Ftimesadd} $F$ is zerosumfree.
		\item\label{Funitnoinverse} The unit $1 \in F$ has no additive inverse.
		\item\label{Fnotfield} $F$ is not a field.
	\end{enumerate}
\end{lem}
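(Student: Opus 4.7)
The plan is to prove the implications in the cyclic order \ref{Ftimesadd} $\Rightarrow$ \ref{Funitnoinverse} $\Rightarrow$ \ref{Fnotfield} $\Rightarrow$ \ref{Ftimesadd}. The first two implications are essentially immediate, while the last one — showing that a non-zerosumfree semifield is automatically a field — is the content of the lemma and requires a short argument using invertibility.

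For \ref{Ftimesadd} $\Rightarrow$ \ref{Funitnoinverse}: if $1 + w = 0$ for some $w \in F$, then zerosumfreeness forces $1 = 0$, contradicting the semifield axiom $1 \neq 0$. For \ref{Funitnoinverse} $\Rightarrow$ \ref{Fnotfield}: this is just the contrapositive, since in any field the unit $1$ has the additive inverse $-1$.

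The main step is \ref{Fnotfield} $\Rightarrow$ \ref{Ftimesadd}, which I will prove by contrapositive: assuming $F$ is not zerosumfree, I will deduce that $F$ is a field. So pick $x, y \in F$ with $x + y = 0$ but at least one of them, say $x$, nonzero. Since $F$ is a semifield, $x$ is invertible, and multiplying $x + y = 0$ by $x^{-1}$ gives $1 + x^{-1} y = 0$. Hence $1$ itself has an additive inverse, call it $-1 := x^{-1} y$. Now the key observation is that $-1$ provides additive inverses for \emph{all} elements of $F$: for any $z \in F$ we have
\[
    z + (-1) z = z \bigl( 1 + (-1) \bigr) = z \cdot 0 = 0,
\]
using distributivity and the standard fact (valid in any semiring) that $z \cdot 0 = 0$. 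Thus every element of $F$ has an additive inverse, so $F$ is a commutative ring; combined with the semifield conditions $1 \neq 0$ and $F^\times = F \setminus \{0\}$, this makes $F$ a field.

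I do not expect any serious obstacles: the only slightly non-trivial step is converting a single instance of a nontrivial zero sum into additive inverses for every element, and this falls out immediately from distributivity together with the invertibility provided by the semifield structure.
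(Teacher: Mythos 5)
Your proof is correct and uses essentially the same ideas as the paper's: the key steps (a nontrivial zero sum yields an additive inverse of $1$ after multiplying by $x^{-1}$, and that inverse then produces additive inverses for all elements, making $F$ a field) are exactly the paper's arguments, merely arranged as a cycle of implications rather than as the two pairwise equivalences \ref{Ftimesadd} $\Leftrightarrow$ \ref{Funitnoinverse} and \ref{Funitnoinverse} $\Leftrightarrow$ \ref{Fnotfield}.
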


\begin{proof}
	Condition \ref{Ftimesadd} implies that the unit $1 \in F$ has no additive inverse, since otherwise $1 + (-1) = 0$, forcing $1 = 0$, which is excluded in the definition of semifield. Conversely, if $F$ is not zerosumfree, then we have $x, y\in F^\times$ with $x + y = 0$. Multiplying by $x^{-1}$ gives an additive inverse for $1$.

	Condition \ref{Funitnoinverse} trivially implies that $F$ is not a field. Conversely, if $1$ has an additive inverse $-1$, then multiplying any element $x \in F$ by $-1$ produces the additive inverse $-x$, making $F$ into a field.
\end{proof}

\begin{defn}
	If the equivalent conditions of \Cref{strict_semifield} hold, then we say that the semifield $F$ is \newterm{strict}.
\end{defn}

A convenient property of a strict semifield $F$ is that $F^\times$ is closed both under addition and under multiplication.

Since none of our examples of semifields considered so far is a field, they are all strict semifields. In fact, throughout the paper we will almost exclusively consider strict semifields only. 

\begin{defn}[{\normalfont{e.g.}~\cite{HL}}]
	\label{semidomain}
	A \newterm{semidomain} $S$ is a semiring with $1 \neq 0$ and without zero divisors: for all $x,y \in S$,
	\[
		xy = 0 \quad \Longrightarrow \quad x = 0 \; \lor \; y = 0.
	\]
\end{defn}

\begin{ex}
	\begin{enumerate}
		\item $\N$ is a semidomain, as is every polynomial semiring $\N[\underline{X}]$.
		\item For $n \in \N$, consider
			\[
				S \coloneqq \{x \in \Z^n \mid x = 0 \;\, \lor \;\, \forall i : \: x_i > 0 \}.
			\]
			Then $S$ is a semidomain with respect to the componentwise operations.
	\end{enumerate}
\end{ex}

Trivially every semifield is a semidomain, as is every subsemiring of a semifield. In fact every semidomain is a subsemiring of a semifield, due to the following construction.

\begin{rem}
	\label{fractions}
	If $S$ is a semidomain, then $S$ has a \newterm{semifield of fractions} into which it embeds~\cite[Example~11.7]{golan}.
	It is constructed as the set of equivalence classes of fractions $\frac{x}{y}$ for $x,y \in S$ with $y \neq 0$, where two fractions are considered equivalent if they become equal for some common denominator, and addition and multiplication of fractions is defined as usual. This produces a semifield that we denote by $\Frac(S)$.
	It is strict if and only if $S$ is zerosumfree.

	There is a canonical homomorphism
	\[
		S \longrightarrow \Frac(S), \qquad x \longmapsto \frac{x}{1},
	\]
	which has the obvious universal property with respect to semiring homomorphisms from $S$ into semifields: if $\phi : S \to F$ is a semiring homomorphism into a semifield $F$, then $\phi$ extends uniquely to a homomorphism $\Frac(S) \to F$. 
\end{rem}

\begin{ex}
	\label{semifield_product}
	Let $F_1$ and $F_2$ be strict semifields. Then their \newterm{categorical product} $F_1 \times F_2$ is a semifield with underlying set
	\[
		(F_1^\times \times F_2^\times) \: \cup \: \{(0,0)\},
	\]
	and carries the componentwise algebraic operations. It is easy to check that this again defines a semifield, where this set is closed under addition thanks to the strictness assumption.
	Moreover, it is straightforward to see that the canonical projection homomorphisms
	\[
		F_1 \times F_2 \longrightarrow F_1, \qquad
		F_1 \times F_2 \longrightarrow F_2
	\]
	indeed equip $F_1 \times F_2$ with the universal property of a product in the category of strict semifields and semiring homomorphisms.
	(There also is a coproduct of strict semifields given by the tensor product, but we will not go into the details here.)
\end{ex}

\subsection*{Quasi-complements}

While a semiring does typically not have any additive inverses, many semirings nevertheless satisfy a weaker property that we introduce next. For a semiring $S$, we consider every $n \in \N$ as an element of $S$ via the unique homomorphism $\N \to S$ which maps every $n$ to the $n$-fold sum $1 + \cdots + 1$. Note that $\N \to S$ does not need to be injective: we may have $n = m$ in $S$ even when $n \ne m$ in $\N$. For example in $\TR_+$ or $\B$, all nonzero natural numbers coincide.

\begin{defn}
	\label{quasicompl_defn}
	A semiring $S$ has \newterm{quasi-complements} if for every $x \in S$ there are $n \in \N$ and $y \in S$ with $x + y = n$.
\end{defn}

For example, obviously $\N$ itself has quasi-complements. Less trivially, the semirings $\N^n$ and $\R^n$ with the componentwise algebraic operations have quasi-complements for any $n \in \N$. Quasi-complements are closely related to Archimedeanicity in the usual sense of the term in real algebra (see the proof of \Cref{kkd}).

\section{Preordered semirings and semifields}
\label{preorderstuff}

\subsection{Basic order-theoretic concepts}

We now recall some basic order theory, mainly to introduce our notation and terminology.

\begin{defn}
	A \newterm{preorder relation} $\le$ on a set $X$ is a binary relation that is reflexive and transitive: for all $x,y,z \in X$, we have $x \le x$ and
	\[
		x \le y \quad \land \quad y \le z \quad \Longrightarrow \quad x \le z.
	\]
	It is a \newterm{partial order relation} if it is moreover antisymmetric,
	\[
		x \le y \quad \land \quad y \le x \quad \Longrightarrow \quad x = y.	
	\]
\end{defn}

For example, the \newterm{trivial preorder} on a set $X$ has $x \le y$ if and only if $x = y$.
In our context, the difference between preorders and partial orders has very little relevance: assuming the antisymmetry amounts to using the equality symbol to denote the property that two elements are ordered in both directions. Therefore the difference is merely notational. We usually do not impose antisymmetry, but work with preorders instead: these are notationally more convenient since we frequently extend a given preorder to a larger one, which may then no longer be antisymmetric even if the original preorder was. And instead of forcing antisymmetry in such a situation by taking the respective quotient, it is more convenient to retain the original set and merely extend the ordering relation, accepting that it may not be antisymmetric.

\begin{nota}
	\label{le_notation}
	Let $\le$ be a preorder. Then:
	\begin{enumerate}
		\item $x \ge y$ means $y \le x$.
		\item $x < y$ is shorthand for $x \le y \:\land\: y \not\le x$, and similarly for $x > y$.
		\item We write $\approx$ for the largest equivalence relation contained in $\le$, meaning that
			\[
				x \approx y \quad : \Longleftrightarrow \quad x \le y \quad \land \quad y \le x.	
			\]
		\item We write $\sim$ for the equivalence relation generated by $\le$, that is: $x \sim y$ holds if and only if there is a $z_0, \ldots, z_n$ such that
		$z_i \le z_{i+1}$ or $z_i \ge z_{i+1}$ for all $i$, as well as $x = z_0$ and $z_n = y$.
	\end{enumerate}
\end{nota}

Note that the last two pieces of notation are non-standard. We will use them in this paper and its sequel without further mention.
If $X$ is a preordered set, then the quotient $X / \!\approx$ is a partially ordered set with respect to the induced preorder on equivalence classes.

\begin{nota}
	\label{opposite}
	If $X$ is a preordered set, then we write $X^\op$ for the same set with the \newterm{opposite preorder}: $x \le y$ in $X$ if and only if $y \le x$ in $X^\op$.
\end{nota}

\begin{defn}
	\label{preorder_maps}
	If $X$ and $Y$ are preordered sets, then a map $f : X \to Y$ is:
	\begin{enumerate}
		\item \newterm{monotone} if for all $x,y \in X$,
			\[
				x \le y \quad \Longrightarrow \quad f(x) \le f(y).
			\]
		\item an \newterm{order embedding} if for all $x, y \in X$,
			\[
				x \le y \quad \Longleftrightarrow \quad f(x) \le f(y).
			\]
		\item an \newterm{equivalence} if it is monotone and there is monotone $g : Y \to X$ such that for all $x \in X$ and $y \in Y$,
			\[
				g(f(x)) \approx x, \qquad f(g(y)) \approx y.
			\]
	\end{enumerate}
\end{defn}

Note that an order embedding $X \to Y$ does not need to be injective: $f(x_1) = f(x_2)$ only implies $x_1 \approx x_2$, and the preorder on $X$ does not need to be antisymmetric. Our equivalences are a special case of equivalences in category theory, and the following simple observation is likewise a special case of the standard characterization of equivalence of categories~\cite[Theorem~1.5.9]{riehl}.

\begin{lem}
	\label{equivalence_char}
	For preordered sets $X$ and $Y$, the following are equivalent for a monotone map $f : X \to Y$:
	\begin{enumerate}
		\item $f$ is an equivalence.
		\item $f$ is an order embedding, and for every $y \in Y$ there is $x \in X$ with $f(x) \approx y$.
		\item\label{posetiso} The induced map
			\[
				f / \!\approx \; : \: X / \!\approx\: \longrightarrow Y / \!\approx
			\]
			is an isomorphism of partially ordered sets.
	\end{enumerate}
\end{lem}

In part \ref{posetiso}, monotonicity of $f$ is relevant only for showing that $f / \!\approx$ is well-defined. We could just as well relax the assumption and merely require that $f$ preserve $\approx$, and then monotonicity becomes a consequence of $f / \!\approx$ being an isomorphism.

\begin{defn}
	\label{totalfull}
	A \newterm{total preorder} is a preorder $\le$ such that
	\[
		x \le y \quad \lor \quad y \le x
	\]
	holds for all $x$ and $y$.
\end{defn}

\subsection*{Preordered semirings}

We now introduce the protagonists of this work.

\begin{defn}
	\label{preordered_semiring_defn}
	A \newterm{preordered semiring} is a semiring $S$ together with a preorder relation $\le$ such that addition and multiplication are monotone: for all $a, x, y \in S$,
	\[
		x\le y \qquad \Longrightarrow \qquad a + x \le a + y \quad \land \quad a x \le a y.
	\]
\end{defn}

These compatibility conditions are equivalent to saying that the preorder relation $\le \: \subseteq S \times S$ must be a subsemiring.

\begin{rem}
	We may or may not have $1 \ge 0$ in $S$.\footnote{Note that $1 \ge 0$ is assumed in the definition of preordered semiring in the work of Vrana~\cite{vrana}. We have found it more natural not to make this assumption, since in particular \Cref{abstract_semifield_pss} holds just as well without it, and there are interesting applications of our theory in which it does not hold.} If we do, then we necessarily have $x \ge 0$ for all $x\in S$, since $1 \ge 0$ implies $x \cdot 1 \ge x \cdot 0$. But it may also happen that $1 < 0$, as for example in $\N^\op$.
\end{rem}

\begin{rem}
	Following \Cref{opposite}, given a preordered semiring $S$, the \newterm{opposite preordered semiring} $S^\op$ is the same semiring carrying the opposite preorder. Since the conditions of \Cref{preordered_semiring_defn} are invariant under reversing the order, $S^\op$ is a preordered semiring again.
\end{rem}

\begin{rem}
	\label{generate_order}
	Given a semiring $S$, we can consider the set of all possible preorders on $S$ which make $S$ into a preordered semiring. Since the intersection of a family of semiring preorders on $F$ is again a semiring preorder, it follows that the semiring preorders form a complete lattice under inclusion. Moreover, for any two families of elements $(x_i)_{i \in I}$ and $(y_i)_{i\in I}$, there is a smallest semiring preorder on $S$ such that $x_i \le y_i$ for all $i \in I$. We call it the semiring preorder \newterm{generated by} $(x_i \le y_i)_{i \in I}$.
\end{rem}

\begin{defn}
	A preordered semiring $S$ is \newterm{order cancellative} if it satisfies
	\[
		a + x \le a + y \quad \Longrightarrow \quad x \le y
	\]
	for all $a,x,y \in S$.
\end{defn}

Note that the opposite implication always holds by the definition of preordered semiring.

\begin{lem}
	\label{strict_order_preserve}
	Let $S$ be a preordered semiring which is order cancellative. Then
	\[
		x < y \qquad \Longrightarrow \qquad a + x < a + y
	\]
	holds in $S$.
\end{lem}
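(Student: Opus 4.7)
The plan is to unfold the definition of strict inequality and then use order cancellativity for the nontrivial direction. By \Cref{le_notation}, the hypothesis $x < y$ means $x \le y$ together with $y \not\le x$, and the desired conclusion $a + x < a + y$ unpacks as $a + x \le a + y$ together with $a + y \not\le a + x$.

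For the first half, $a + x \le a + y$ is immediate from $x \le y$ and the monotonicity of addition in the definition of preordered semiring (\Cref{preordered_semiring_defn}). For the second half, I would argue by contradiction: suppose $a + y \le a + x$. Then order cancellativity of $S$ lets us cancel $a$ from both sides, yielding $y \le x$, which contradicts the assumption $y \not\le x$ coming from $x < y$. Hence $a + y \not\le a + x$, completing the proof.

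The argument is entirely formal and there is no real obstacle; it is a direct one-line consequence of the definitions, included presumably because it fixes the behaviour of strict inequality under addition that will be used repeatedly in later sections.
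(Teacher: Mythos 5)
Your argument is correct and coincides with the paper's own proof: both use monotonicity of addition for $a + x \le a + y$ and then order cancellativity to rule out $a + y \le a + x$, which would force $y \le x$ and contradict $x < y$. Nothing to add.
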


\begin{proof}
	Clearly $x < y$ implies $a + x \le a + y$ by monotonicity of addition. If $a + x \ge a + y$ was true as well, then the order cancellativity would give us $x \ge y$, contradicting the assumption.
\end{proof}

We now extend the definition of equivalence of preorders (\Cref{preorder_maps}) to semirings.

\begin{lem}
	\label{preordered_semiring_equivalence}
	For preordered semirings $S$ and $T$, the following are equivalent for a monotone homomorphism $f : S \to T$:
	\begin{enumerate}
		\item $f$ is an order embedding, and for every $y \in T$ there is $x \in S$ with $f(x) \approx y$.
		\item The induced homomorphism
			\[
				f / \! \approx \: \: : \: S / \! \approx \: \longrightarrow T / \! \approx
			\]
			is an isomorphism of partially ordered semirings.
	\end{enumerate}
\end{lem}

\begin{proof}
	Straightforward.
\end{proof}

\begin{defn}
	If $f : S \to T$ satisfies the equivalent conditions of \Cref{preordered_semiring_equivalence}, then we say that $F$ is an \newterm{equivalence}.
\end{defn}

The following result is an analogue of Mac Lane's strictification theorem for monoidal categories (but much simpler). As a technical tool it is not very useful, but it provides an important piece of intuition on how to think about preordered semirings up to equivalence. It also shows that the commonly encountered restriction to polynomial rings in real algebra is not a substantial restriction, though in general one needs to consider polynomials in infinitely many variables.

\begin{prop}
	\label{strictify}
	Every preordered semiring $S$ is equivalent to a polynomial preordered semiring, in finitely many variables if $S$ is finitely generated.
\end{prop}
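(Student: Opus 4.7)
The plan is to realize $S$ as a quotient of a free semiring and then transfer the preorder across this quotient in a way that makes the quotient homomorphism an equivalence. Concretely, choose any generating set $I \subseteq S$ for $S$ as a semiring, taking $I$ finite when $S$ is finitely generated and $I = S$ otherwise. Form the polynomial semiring $T := \N[X_i : i \in I]$, which is free on $I$, and let $\pi : T \to S$ be the surjective semiring homomorphism determined by $X_i \mapsto i$.

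Next, equip $T$ with the \emph{pullback preorder} along $\pi$, namely
\[
    p \le q \quad : \Longleftrightarrow \quad \pi(p) \le \pi(q) \text{ in } S.
\]
Since $\pi$ is a semiring homomorphism and $\le$ on $S$ is a semiring preorder, this preorder on $T$ is a semiring preorder: if $p \le q$ then $\pi(a + p) = \pi(a) + \pi(p) \le \pi(a) + \pi(q) = \pi(a + q)$, and similarly for multiplication. By definition, $\pi$ is now an order embedding, and it is surjective, so for every $y \in S$ there is $x \in T$ with $\pi(x) = y$ and in particular $\pi(x) \approx y$. Invoking the equivalence characterization of preordered semirings (the semiring analogue of \Cref{equivalence_char}) then gives that $\pi$ is an equivalence of preordered semirings.

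There is essentially no obstacle: the only point that requires any care is noting that one may just pull back the preorder along $\pi$, and that a surjective order embedding is automatically an equivalence. When $S$ is finitely generated, the chosen $I$ is finite, so $T$ is a polynomial semiring in finitely many variables, giving the strengthened conclusion.
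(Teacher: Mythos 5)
Your proof is correct and follows essentially the same route as the paper: present $S$ via a generating set, map the polynomial semiring $\N[X_i : i \in I]$ onto $S$, pull back the preorder so the surjection becomes an order embedding, and conclude it is an equivalence by surjectivity. No gaps.
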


\begin{proof}
	Let $G \subseteq S$ be any generating set, finite if $S$ is finitely generated, and consider the polynomial semiring $\N[(X_a)_{a \in G}]$. The mapping $X_a \mapsto a$ uniquely extends to a homomorphism $\N[(X_a)_{a \in G}] \to S$. Pulling back the preorder from $S$ to $\N[(X_a)_{a \in G}]$ along this homomorphism makes $\N[(X_a)_{a \in G}]$ into a preordered semiring and the homomorphism into an order embedding. It is an order embedding by construction, and hence an equivalence by \Cref{preordered_semiring_equivalence} and its surjectivity.
\end{proof}

\subsection*{Preordered semifields}

Moving on to preorders on semifields, it turns out to be useful to again add a mild nondegeneracy condition to the definition, analogous to $1 \neq 0$ in the definition of semifield itself (\Cref{semifield_defn}).

\begin{defn}
	\label{preordered_semifield}
	A \newterm{preordered semifield} is a strict semifield $F$ equipped with a semiring preorder $\le$ such that $1 \not\approx 0$. 
\end{defn}

\begin{ex}
	All the semifields from \Cref{basic_semifields}, when equipped with their usual order structures, become totally preordered semifields, and all the homomorphisms listed there are monotone, where the injective ones are order embeddings.
\end{ex}

\begin{rem}
	\label{approx_zero}
	If $F$ is a preordered semifield, then $x \approx 0$ in $F$ implies $x = 0$. For if $x \neq 0$, then $x$ would have to be invertible, and then $x \approx 0$ would imply $1 = x x^{-1} \approx 0$, which is assumed not to be the case by the definition of preordered semifield.

	The only semiring preorder on $F$ that is excluded by the requirement $1 \not\approx 0$ is the complete one: $1 \approx 0$ implies $x \approx 0 \approx y$ for all $x,y \in F$.
\end{rem}

\begin{ex}
	\label{preordered_semifield_product}
	Recall the categorical product of strict semifields from \Cref{semifield_product}. Using the componentwise preorder
	\[
		(x_1, x_2) \le (y_1, y_2) \qquad : \Longleftrightarrow \qquad x_1 \le y_1 \enspace \land \enspace x_2 \le y_2	
	\]
	turns this product semifield into a preordered semifield itself. This is the categorical product in the category of preordered semifields and monotone semiring homomorphisms.
\end{ex}

\begin{ex}
	\label{lexico_prod}
	Let $F_1$ be a totally preordered semifield which is order cancellative, such as $\R_+$, and let $F_2$ be any preordered semifield. Then their \newterm{lexicographic product} $F_1 \ltimes F_2$ also has the set
	\[
		(F_1^\times \times F_2^\times) \cup \{(0,0)\}
	\]
	with the componentwise algebraic structure as its underlying semiring, with preorder relation 
	\[
		(x_1, x_2) \le (y_1, y_2) \quad : \Longleftrightarrow \quad x_1 < y_1 \enspace \lor \enspace ( x_1 \approx y_1 \enspace \land \enspace x_2 \le y_2 ).
	\]
	It is straightforward to show that $F_1 \ltimes F_2$ is again a preordered semifield, where the monotonicity of addition makes use of \Cref{strict_order_preserve}. The identity map is a monotone homomorphism from the categorical product to the lexicographic product, $F_1 \times F_2 \to F_1 \ltimes F_2$.
\end{ex}

\begin{defn}
	\label{preord_semidomain}
	A \newterm{preordered semidomain} $S$ is a preordered semiring which is a semidomain (\Cref{semidomain}) and such that for all $x \in S$,
	\[
		x \approx 0 \quad \Longrightarrow \quad x = 0.
	\]
\end{defn}

Every preordered semifield is also a preordered semidomain (\Cref{approx_zero}).
Next, recall the semifield of fractions from \Cref{fractions}.

\begin{lem}
	\label{fractions_ordered}
	If $S$ is a zerosumfree preordered semidomain, then $\Frac(S)$ becomes a preordered semifield with respect to the preorder given by, for nonzero $a,b \in S$,
	\[
		\frac{x}{a} \le \frac{y}{b} \quad :\Longleftrightarrow \quad \exists r \in S \setminus \{0\}: \: xbr \le yar.
	\]
\end{lem}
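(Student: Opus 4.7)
The plan is to verify, in order, each of the conditions that the given relation must satisfy in order to make $\Frac(S)$ into a preordered semifield in the sense of \Cref{preordered_semifield}: well-definedness on fractions, reflexivity and transitivity, monotonicity under $+$ and $\cdot$, strictness of $\Frac(S)$, and finally the nondegeneracy $1 \not\approx 0$. The underlying strict semifield structure is already in place by \Cref{fractions} together with the zerosumfree assumption, so only the order-theoretic items genuinely need work.

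First I would check that the defining relation is independent of representatives: if $\tfrac{x}{a} = \tfrac{x'}{a'}$ in $\Frac(S)$, meaning $x a' = x' a$, then from any witness $xbr \le yar$ for $\tfrac{x}{a} \le \tfrac{y}{b}$ one multiplies by $a'$ and uses $xa' = x'a$ to obtain $a(x' b r) \le a(y a' r)$, so $ar$ is a valid new witness (nonzero because $S$ is a semidomain). A symmetric calculation handles the right-hand fraction. Reflexivity is immediate with $r = 1$. For transitivity, given witnesses $r,s \ne 0$ for $\tfrac{x}{a} \le \tfrac{y}{b}$ and $\tfrac{y}{b} \le \tfrac{z}{c}$, multiplying $xbr \le yar$ by $cs$ and $ycs \le zbs$ by $ar$ and chaining gives $xc \cdot brs \le za \cdot brs$, with $brs \ne 0$ by the semidomain property.

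Monotonicity is a direct computation after bringing sums to a common denominator. For addition, $\tfrac{x}{a} + \tfrac{z}{c} = \tfrac{xc + za}{ac}$ and $\tfrac{y}{b} + \tfrac{z}{c} = \tfrac{yc + zb}{bc}$, and the required inequality
\[
(xc + za) \, bc \, r \;\le\; (yc + zb) \, ac \, r
\]
reduces after cancelling the common $z a b c r$ term to $xbr \cdot c^2 \le yar \cdot c^2$, which is just the hypothesis multiplied by $c^2$. Multiplication is analogous and even simpler: $xbr \le yar$ multiplied by $zc$ gives exactly the witness needed for $\tfrac{xz}{ac} \le \tfrac{yz}{bc}$. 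Strictness of $\Frac(S)$ as a semifield is already recorded in \Cref{fractions} as a consequence of the zerosumfree hypothesis on $S$.

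The main subtlety, and the step I would leave for last, is the nondegeneracy $1 \not\approx 0$. Suppose for contradiction that $\tfrac{1}{1} \approx \tfrac{0}{1}$. Unpacking, there exist nonzero $r, s \in S$ with $r \le 0$ and $0 \le s$. Multiplying the first by $s$ and the second by $r$ and using monotonicity of multiplication yields $rs \le 0$ and $0 \le rs$, i.e.\ $rs \approx 0$ in $S$. By the preordered semidomain hypothesis (\Cref{preord_semidomain}) this forces $rs = 0$, contradicting the absence of zero divisors in the semidomain $S$. This combination of the two halves of an $\approx$-relation through multiplication, combined with the defining condition of a preordered semidomain, is the one place in the argument where the full strength of the hypotheses is used.
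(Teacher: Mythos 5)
Your proposal is correct and follows essentially the same route as the paper's proof: the same witness manipulations for well-definedness, transitivity and monotonicity, and the same final argument for $1 \not\approx 0$ via $0 \le rs \le 0$, the condition of \Cref{preord_semidomain}, and the absence of zero divisors. One small caveat: equality in $\Frac(S)$ means $x a' s = x' a s$ for \emph{some} nonzero $s$ (a semidomain need not be multiplicatively cancellative, so the extra factor cannot be dropped, and this is the form the paper uses); your well-definedness computation goes through verbatim once you multiply the witness inequality by $a's$ rather than $a'$, and likewise "cancelling the common $zabcr$ term" should be read in the forward direction, i.e.\ the desired inequality follows by adding that term to the hypothesis multiplied by $c^2$, since additive cancellation is not available.
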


\begin{proof}
	We already know by \Cref{fractions} that $\Frac(S)$ is a strict semifield, so we only need to show that the above defines a semiring preorder and that $1 \not \approx 0$. We do this in several steps.
	\begin{enumerate}
		\item The preorder relation is well-defined.
			
			Indeed suppose that $\frac{x_1}{a_1}$ and $\frac{x_2}{a_2}$ represent the same element of $\Frac(S)$, meaning that there is nonzero $s$ with $x_1 a_2 s = x_2 a_1 s$. Then if we have $\frac{x_1}{a_1} \le \frac{y}{b}$ by virtue of $x_1 br \le ya_1 r$, then also
			\[
				x_2 b (a_1 s r) = x_1 a_2 b s r \le y a_2 (a_1 s r),
			\]
			which gives $\frac{x_2}{a_2} \le \frac{y}{b}$ since $a_1 s r \neq 0$. Well-definedness with respect to the fraction on the right-hand side works analogously.
		\item The preorder relation is transitive.
	
			Thus suppose that $\frac{x}{a} \le \frac{y}{b} \le \frac{z}{c}$ for nonzero $a,b,c \in S$, meaning that there are nonzero $r,s \in S$ with
			\[
				xbr \le yar, \qquad ycs \le zbs.
			\]
			Then also $brs$ is nonzero, and
			\[
				xcbrs \le yacrs \le zabrs,
			\]
			which gives indeed $\frac{x}{a} \le \frac{z}{c}$.
		\item Multiplication is monotone.
			
			We multiply the assumed inequality $\frac{x}{a} \le \frac{y}{b}$, as witnessed by $xbr \le yar$, by $\frac{z}{c}$. This gives $\frac{xz}{ac} \le \frac{yz}{bc}$ thanks to
			\[
				(xz)(bc)r \le (yz)(ac)r,
			\]
			which is enough.
		\item Addition is monotone.
			
			We use the same assumptions and need to show $\frac{xc + za}{ac} \le \frac{yc + zb}{bc}$, which amounts to
			\[
				(xc + za)(bc)r \le (yc + zb)(ac)r,
			\]
			which indeed follows from the assumption $xbr \le yar$.
		\item We have $\frac{1}{1} \not\approx \frac{0}{1}$ in $\Frac(S)$. Indeed $\frac{1}{1} \ge \frac{0}{1}$ and $\frac{1}{1} \le \frac{0}{1}$ would mean that there are nonzero $r,s \in S$ such that $r \ge 0$ and $s \le 0$. But this gives $0 \le rs \le 0$, and hence $rs = 0$ by \Cref{preord_semidomain}. But then also $r = 0$ or $s = 0$ by the definition of semidomain, contradicting the assumption $r,s \neq 0$. \qedhere
	\end{enumerate}
\end{proof}

\begin{lem}
	\label{frac_order_embed}
	If $S$ is a preordered semidomain, then the canonical homomorphism
	\[
		S \longrightarrow \Frac(S), \qquad x \longmapsto \frac{x}{1}
	\]
	is monotone and satisfies
	\[
		\frac{x}{1} \le \frac{y}{1} \quad \Longleftrightarrow \quad \exists a \in S\setminus\{0\}: \: ax \le ay.
	\]
\end{lem}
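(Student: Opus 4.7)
The plan is to derive this lemma as an essentially immediate consequence of \Cref{fractions_ordered}, by simply specializing the preorder definition to the denominators $a = b = 1$.

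For the characterization, I would apply the defining condition of the preorder on $\Frac(S)$ from \Cref{fractions_ordered} directly: by that definition,
\[
\frac{x}{1} \le \frac{y}{1} \quad \Longleftrightarrow \quad \exists r \in S \setminus \{0\} : \: x \cdot 1 \cdot r \le y \cdot 1 \cdot r,
\]
which is literally the claimed criterion after canceling the factors of $1$. So the biconditional requires no work beyond unwinding the definition.

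For monotonicity of the canonical homomorphism, I would take any $x \le y$ in $S$ and choose the witness $r = 1$, which is nonzero because $S$ is a semidomain (so $1 \neq 0$ by \Cref{semidomain}). Then $x \cdot 1 \cdot 1 \le y \cdot 1 \cdot 1$, so by the criterion just established we have $\frac{x}{1} \le \frac{y}{1}$ in $\Frac(S)$.

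There is no real obstacle; the content of the result lies entirely in the construction of the preorder in \Cref{fractions_ordered}, and what remains here is purely bookkeeping. The one point worth noting explicitly in the writeup is that the use of $r = 1$ as a witness relies on the semidomain axiom $1 \neq 0$, which is available by hypothesis.
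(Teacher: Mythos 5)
Your proposal is correct and matches the paper's proof, which simply says ``by definition'': the biconditional is the defining condition of the preorder on $\Frac(S)$ from \Cref{fractions_ordered} with both denominators equal to $1$, and monotonicity follows by taking the witness $r = 1 \neq 0$.
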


\begin{proof}
	By definition.
\end{proof}

\subsection*{Magnifiable and shrinkable elements}

The following definition gives two relaxed notions of invertibility, which alternatively can be thought of as a type of lower boundedness and upper boundedness, respectively.

\begin{defn}
	Let $S$ be a preordered semiring. An element $x \in S$ is
	\begin{enumerate}
		\item \newterm{magnifiable} if there is $y \in S$ with $xy \ge 1$.
		\item \newterm{shrinkable} if there is $y \in S$ with $xy \le 1$.
	\end{enumerate}
\end{defn}

For example, in a trivially preordered semiring the magnifiable elements are exactly the invertible ones, as are the shrinkable ones.

\begin{lem}
	\label{magnifiable_subsemiring}
	Let $S$ be a preordered semiring with $1 \ge 0$. Then the set of magnifiable elements is closed under addition and multiplication and upwards closed.
\end{lem}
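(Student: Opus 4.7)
The plan is to exploit the standing assumption $1 \ge 0$ via the observation already recorded in the paper after \Cref{preordered_semiring_defn}: for every $a \in S$, monotonicity of multiplication applied to $1 \ge 0$ gives $a = a\cdot 1 \ge a \cdot 0 = 0$, so all elements of $S$ are nonnegative. With this in hand, each of the three closure properties reduces to a one-line monotonicity argument, and no real obstacle is expected.

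For \emph{upward closure}, suppose $x$ is magnifiable via $xy \ge 1$ and $x' \ge x$. Multiplying the inequality $x \le x'$ by $y$ (using monotonicity of multiplication together with commutativity) gives $xy \le x'y$, hence $x'y \ge 1$, so $x'$ is magnifiable.

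For \emph{closure under multiplication}, let $x_1 y_1 \ge 1$ and $x_2 y_2 \ge 1$. Multiplying the first inequality by $x_2 y_2$ yields $(x_1 x_2)(y_1 y_2) = (x_2 y_2)(x_1 y_1) \ge x_2 y_2 \ge 1$, so $x_1 x_2$ is magnifiable with witness $y_1 y_2$.

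For \emph{closure under addition}, given the same data, I would check that $y_1$ already serves as a witness for $x_1 + x_2$: distributivity gives $(x_1 + x_2)y_1 = x_1 y_1 + x_2 y_1$, and then $x_1 y_1 \ge 1$ together with the preliminary observation $x_2 y_1 \ge 0$ (applied via monotonicity of addition) yields
\[
    (x_1 + x_2) y_1 \ge 1 + x_2 y_1 \ge 1 + 0 = 1,
\]
so $x_1 + x_2$ is magnifiable. The only place where the hypothesis $1 \ge 0$ is genuinely used is precisely this step, through the induced nonnegativity of $x_2 y_1$.
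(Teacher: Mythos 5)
Your proof is correct and follows essentially the same elementary monotonicity argument as the paper, which for the sum uses the symmetric witness $y_1 + y_2$ via $(x_1+x_2)(y_1+y_2) \ge x_1y_1 + x_2y_2 \ge 2 \ge 1$, while you use $y_1$ alone together with $x_2 y_1 \ge 0$; both variants hinge on $1 \ge 0$ forcing all elements to be nonnegative. The multiplication and upward-closure steps coincide with the paper's, so there is nothing to fix.
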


Applying this to $S^\op$ yields a similar statement for shrinkable elements.

\begin{proof}
	Suppose that $x,y \in S$ are magnifiable, so that we have $a,b \in S$ with $xa \ge 1$ and $yb \ge 1$. Then also
	\[
		(xy)(ab) \ge 1,
	\]
	and
	\[
		(x + y)(a + b) \ge xa + yb \ge 2 \ge 1,
	\]
	as was to be shown. Upwards closure is trivial.
\end{proof}

\begin{lem}
	\label{semidomain_zerodivisor}
	Let $S$ be a preordered semiring with $1 > 0$ and such that every nonzero element is magnifiable. Then also:
	\begin{enumerate}
		\item For $x,y \in S$ with $y \neq 0$ there is $a \in S$ with $x \le ay$.
		\item For all $x,y \in S$, we have
			\[
				xy \le 0 \quad \Longrightarrow \quad x = 0 \quad \lor \quad y = 0,
			\]
			and
			\[
				x + y \le 0 \quad \Longrightarrow \quad x = y = 0.
			\]
	\end{enumerate}
\end{lem}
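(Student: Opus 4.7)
The plan is to exploit the preceding remark that $1 \ge 0$ forces $0 \le z$ for every $z \in S$ (by multiplying $1 \ge 0$ by $z$), together with the fact that under the hypothesis $S = S^\times \cup \{0\}$ every nonzero element admits some multiplier making it at least $1$. Together these let one convert nontrivial inequalities into contradictions against $1 > 0$ by magnifying.

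For part (a), I would simply unpack the definition of $S^\times$: since $y \neq 0$, pick $b$ with $yb \ge 1$; then by monotonicity of multiplication by $x$,
\[
    (xb) \, y \;=\; x(yb) \;\ge\; x \cdot 1 \;=\; x,
\]
so $a := xb$ does the job.

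For part (b), first handle the multiplicative statement. Suppose for contradiction that $xy \le 0$ while $x,y \neq 0$. Then $x,y \in S^\times$, so pick $a,b$ with $xa \ge 1$ and $yb \ge 1$. Multiplying $xy \le 0$ by $ab$ gives $(xa)(yb) \le 0$, while monotonicity from $xa \ge 1$ and $yb \ge 1$ (together with $0 \le xa$, which follows from $1 \ge 0$) yields $(xa)(yb) \ge 1$. Hence $1 \le 0$, contradicting $1 > 0$. For the additive statement, the standing consequence $z \ge 0$ for all $z$ gives
\[
    0 \;\le\; x \;\le\; x + y \;\le\; 0,
\]
so $x \approx 0$, and symmetrically $y \approx 0$. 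If $x \neq 0$, then $x \in S^\times$, so there is $a$ with $xa \ge 1$; but multiplying $x \approx 0$ by $a$ gives $xa \approx 0$, so $1 \le xa \le 0$, again contradicting $1 > 0$. Thus $x = 0$, and the same argument gives $y = 0$.

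No single step looks hard in isolation; the only subtlety to keep track of is that monotonicity of multiplication in a preordered semiring applies for any multiplier (not just a nonnegative one), so the magnification argument goes through without needing separate positivity checks. The mildly delicate point is the additive case, where one must first establish $x \approx 0$ from the universal positivity of $S$ before invoking magnifiability; this is why the hypothesis $1 > 0$ (strict) rather than merely $1 \ge 0$ is essential.
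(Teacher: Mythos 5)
Your proof is correct and follows essentially the same route as the paper: part (a) is the identical magnification $a := xb$, and part (b) is the same strategy of magnifying to contradict $1 > 0$. The only cosmetic difference is in the additive case, where you first extract $x \approx 0$ from universal positivity before magnifying, while the paper chains directly via $1 \le 1 + ay \le a(x+y) \le 0$; both are sound.
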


In particular, such $S$ is also a zerosumfree semidomain, and we can therefore form the preordered semifield $\Frac(S)$ by \Cref{fractions_ordered}.

\begin{proof}
	\begin{enumerate}
		\item Upon choosing $b \in S$ with $yb \ge 1$, we have $x \le xby$, so that $a \coloneqq xb$ works. 
		\item We start with the first implication. If $x$ and $y$ were both nonzero, then they would have to be magnifiable. But then so is $xy$, and we get $a$ with $1 \le xya \le 0$, contradicting the assumption $1 > 0$.

			Concerning the second implication, suppose $x + y \le 0$. If $x \neq 0$, then $x$ would have to be magnifiable, so that we have $ax \ge 1$ for suitable $a$. Together with $ay \ge 0$, this gives
			\[
				1 \le 1 + ay \le ax + ay = a(x + y) \le 0,
			\]
			contradicting the assumption $1 > 0$. \qedhere
	\end{enumerate}
\end{proof}

\subsection*{Polynomial growth}

We now consider a condition closely related to magnifiability. It was originally introduced in a more specific stronger form---close to the condition we will consider in \Cref{univ_old}---in~\cite{our_spss}.

\begin{defn}
	\label{univ_defn}
	Let $S$ be a preordered semiring.
	\begin{enumerate}
		\item A \newterm{power universal element} is nonzero $u \in S$ with $u \ge 1$ such that for every nonzero $x,y \in S$ with $x \le y$, there is $k \in \N$ such that
			\[
				y \le x u^k .
			\]
		\item A \newterm{power universal pair} is nonzero $u_-, u_+ \in S$ with $u_- \le u_+$ such that for every nonzero $x,y \in S$ with $x \le y$, there is $k \in \N$ such that
			\[
				y u_-^k \le x u_+^k.
			\]
		\item If $S$ has a power universal pair, then we say that $S$ is \newterm{of polynomial growth}.
	\end{enumerate}
\end{defn}

If $u$ is a power universal element, then taking $u_- \coloneqq 1$ and $u_+ \coloneqq u$ gives a power universal pair, and $S$ is of polynomial growth as well.
We will only work with power universal elements in this paper and reserve the letter ``$u$'' for them.
Power universal pairs plays a role in Part II~\cite{partII} only; we anticipate the definition already now in order to have a definition of polynomial growth consistent across both papers.

\begin{rem}
	Both versions of power universality apply not just for $x \le y$, but automatically under the weaker assumption $x \sim y$.
	This follows from the characterization of $x \sim y$ in terms of the existence of a sequence $z_0, \ldots, z_n$ such that
	$z_i \le z_{i+1}$ or $z_i \ge z_{i+1}$ for all $i$, as well as $x = z_0$ and $z_n = y$, and applying the power universality to every inequality in the sequence.
\end{rem}

The following more particular form of polynomial growth, as introduced in~\cite{our_spss}, covers a wide range of examples.

\begin{lem}
	\label{univ_old}
	If $1 > 0$ in $S$, then an element $u \ge 1$ is power universal if and only if for every nonzero $x \in S$ there is $k \in \N$ with
	\beq
		\label{eq_univ_old}
		x \le u^k , \qquad 1 \le x u^k.
	\eeq
\end{lem}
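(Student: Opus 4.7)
The plan is to prove both directions by elementary manipulations of the preorder relation, using the hypothesis $1 \ge 0$ (which, by the remark preceding \Cref{generate_order}, forces $x \ge 0$ for all $x \in S$) and the fact that $u \ge 1$ implies $u^m \le u^n$ for $m \le n$.

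For the forward direction, assume $u$ is power universal and fix nonzero $x \in S$. A mild degeneracy check comes first: if $1 \approx 0$, then every element satisfies $a = a \cdot 1 \approx a \cdot 0 = 0$, so all elements are $\approx$-equivalent and the desired two inequalities hold trivially (take $k=0$). Otherwise $1 \not\approx 0$, and I claim $1 + x \ne 0$: from $0 \le 1$ we get $x \le 1+x$, and from $0 \le x$ we get $1 \le 1+x$, so $1+x = 0$ would force $1 \le 0$ and hence $1 \approx 0$, a contradiction. With $1+x$ nonzero, power universality can be applied to the two pairs $(1, 1+x)$ and $(x, 1+x)$, producing $k_1, k_2 \in \N$ with
\[
    1 + x \le u^{k_1} \cdot 1 = u^{k_1}, \qquad 1 + x \le u^{k_2} x.
\]
Setting $k := \max(k_1, k_2)$ and using $u \ge 1$ to boost powers yields $1 + x \le u^k$ and $1 + x \le u^k x$. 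Dropping the harmless summand on the left via $x \le 1+x$ and $1 \le 1+x$ gives exactly $x \le u^k$ and $1 \le u^k x$.

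For the converse, assume the stated characterization and let $x \le y$ with $x, y$ nonzero. Applied to $y$, it produces $k_1$ with $y \le u^{k_1}$; applied to $x$, it produces $k_2$ with $1 \le u^{k_2} x$. Multiplying the latter by $u^{k_1}$ (monotone because $u^{k_1} \ge 0$) gives $u^{k_1} \le u^{k_1+k_2} x$, and chaining with the former produces $y \le u^{k_1+k_2} x$. So $u$ is power universal with exponent $k := k_1 + k_2$.

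The only mildly delicate point is the degenerate case $1 \approx 0$ in the forward direction, since otherwise the proof proceeds simply by applying power universality to a well-chosen auxiliary pair. Once the nonzero-ness of $1 + x$ is secured, the two inequalities fall out from a single application of power universality to $(1, 1+x)$ and $(x, 1+x)$.
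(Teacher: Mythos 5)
Your proof is correct and takes essentially the same route as the paper's: the converse direction by chaining $y \le u^{k_1} \le u^{k_1+k_2}x$, and the forward direction by two applications of power universality to pairs built from $x+1$ (the paper uses $(1,x+1)$ and then $(x,u^k)$, while you use $(1,1+x)$ and $(x,1+x)$ directly). Your explicit check that $1+x \neq 0$ unless $1 \approx 0$ is a small extra bit of care that the paper's proof leaves implicit, but it does not change the substance of the argument.
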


\begin{proof}
	For the ``if'' part, let $k \in \N$ be large enough so that $y \le u^k$ and $1 \le x u^k$ for given nonzero $x, y \in S$ with $x \le y$. Then also $y \le u^k \le x u^{2k}$, as desired.

	For the ``only if'', applying the assumption to the inequality $1 \le x + 1$, in which both sides are nonzero thanks to $1 \not\le 0$, gives $x+1 \le u^k$ for suitable $k$, which can be weakened to $x \le u^k$. Applying the assumption to this inequality itself gives $\ell \in \N$ with $u^k \le x u^\ell$, which in turn can be weakened to $1 \le x u^\ell$ by $u \ge 1$.
\end{proof}

\begin{ex}
	\label{univ_semifield}
	If $F$ is a preordered semifield, then $F$ is of polynomial growth if and only if there is a power universal element $u$, since a power universal pair can always be replaced by the power universal element $u \coloneqq u_+ u_-^{-1}$.
	Moreover, nonzero $u$ is power universal if and only if $u \ge 1$, and for every nonzero $x \ge 1$ there is $k \in \N$ with $x \le u^k$.
\end{ex}

\begin{ex}
	\label{poly_universal}
	Consider the polynomial semiring of natural number polynomials with strictly positive integer coefficient together with zero,
	\[
		\N_{(+)}[X] \coloneqq \left\{ \sum_i p_i X^i \in \N[X] \:\bigg|\: p_0 > 0 \right\} \cup \{0\},
	\]
	and equipped with the coefficientwise preorder. This is exactly the semiring preorder generated by $1 \ge 0$. Then $u \coloneqq 2 + X$ is a power universal element, as is easily seen by verifying the conditions of \Cref{univ_old}.
\end{ex}

\begin{ex}
	\label{laurent}
	For finitely many variables $\underline{X} = (X_1, \ldots, X_d)$, consider the semiring of Laurent polynomials $\N[\underline{X}, \underline{X}^{-1}]$ with respect to the coefficientwise preorder. Then
	\[
		u \coloneqq 1 + \sum_i (X_i^{-1} + X_i)
	\]
	is a power universal element. More generally, if $S$ is any preordered semiring with $1 > 0$ and having a power universal element $v$, then $S[X, X^{-1}]$ is a preordered semiring with respect to the coefficientwise preorder, and has a power universal element given by
	\[
		u \coloneqq v + X^{-1} + X.
	\]
\end{ex}

\begin{ex}
	\label{polysos}
	In the polynomial ring $\R[\underline{X}] = \R[X_1,\ldots,X_d]$, consider the subsemiring $\Sigma_+ \subseteq \R[\underline{X}]$ consisting of all sums of squares plus constants, as in \eqref{sigma}. For $p,q \in \Sigma_+$, we put $p \le q$ if $q - p$ is itself a sum of squares. This makes $\Sigma_+$ into a preordered semiring with $1 > 0$. It is of polynomial growth with respect to $u \coloneqq 2 + \sum_i X_i^2$.

	More generally, let $R$ be a ring and $\Sigma_{+,R}$ as in \eqref{sigmaR}. Then $\Sigma_{+,R}$ becomes a preordered semiring with respect to the preorder in which $x \le y$ if and only if $y - x$ is itself a sum of squares. If $R$ is finitely generated by $a_1,\ldots,a_n \in R$, then $u \coloneqq 2 + \sum_i a_i^2$ is a power universal element.
\end{ex}

In fact, this example is an instance of the following more general observation.

\begin{lem}[{\cite{our_spss}}]
	\label{polygrowth_2chars}
	Let $S$ be a preordered semiring with $1 > 0$. Suppose that there is $v \in S$ such that for every nonzero $x \in S$ there is $p \in \N[X]$ with
	\beq
		\label{boundp}
		x \le p(v), \qquad 1 \le p(v) x.
	\eeq
	Then $S$ is of polynomial growth with respect to the power universal element $u \coloneqq 2 + v$.
\end{lem}

\begin{proof}
	\Cref{poly_universal} shows that every such polynomial $p(v)$ can be upper bounded by a power of $2 + v$. Hence the claim follows by \Cref{univ_old}.
\end{proof}

\begin{rem}
	\label{univ_gen}
	As for magnifiable elements in \Cref{magnifiable_subsemiring}, the set of elements $x$ which satisfy the polynomial growth condition in the form \eqref{eq_univ_old} or \eqref{boundp} is closed under addition and multiplication. It follows that these conditions only need to be verified on some subset which generates $S$.
\end{rem}

\begin{rem}
	\label{nonzeros}
	If $S$ has a power universal element $u$ and satisfies $1 > 0$, then every nonzero element is magnifiable.
	Thus \Cref{semidomain_zerodivisor} applies, and we can conclude in particular that $S$ is a zerosumfree preordered semidomain. Moreover, $\Frac(S)$ is then a preordered semifield of polynomial growth.
\end{rem}

We end with an important example in which a power universal element does not exist, but a power universal pair does.

\begin{ex}
	Let $\N[\underline{X}]$ be the polynomial semiring in one variable equipped with the coefficientwise preorder. Then a power universal element does not exist already for one variable: there is clearly no polynomial $u$ and no $k \in \N$ such that $(1 + X) \le X u^k$.

	However, $\N[\underline{X}]$ does have a power universal pair given by
	\[
		u_- \coloneqq \prod_i X_i, \qquad u_+ \coloneqq u_- + \sum_i (1 + X_i^2) \prod_{j\neq i} X_j.
	\]
	The fact that these work follows from the case of Laurent polynomials considered in \Cref{laurent}, since the power universal pair $(1,u)$ given there produces the current one upon multiplication by $\prod_i X_i$, which makes sure that the negative exponents in $u$ are eliminated.
\end{ex}

\section{Multiplicatively Archimedean totally preordered semifields}
\label{malt_total}

The results of this section generalize the classical fact that the real numbers are the only Dedekind complete totally ordered Archimedean field. In our semifield setting, another such model object crops up: the tropical reals from \Cref{basic_semifields}.

Technically, we now discuss a notion of Archimedeanicity for preordered semifields. Although the following terminology unfortunately clashes somewhat with the established notion of Archimedeanicity in real algebra, we have chosen the term \emph{multiplicatively Archimedean} since our definition essentially says that the multiplicative group must be Archimedean in the sense in which the term is standardly used for preordered abelian groups (see e.g.~\cite[Chapter~4]{glass}). We routinely use that a preordered semifield is of polynomial growth if and only if it has a power universal element (\Cref{univ_semifield}).

\begin{defn}
	\label{march}
	A preordered semifield $F$ is \newterm{multiplicatively Archimedean} if $x^k \le y$ for all $k \in \N$ implies $x \le 1$ for all $x, y \in F^\times$.
\end{defn}

In the totally preordered case, this equivalently states that every $x \in F^\times$ with $x > 1$ is a power universal element: for every nonzero $y \ge 1$ in $F$ there is $k \in \N$ such that $y \le x^k$, or even $y < x^k$ by increasing $k$.

\begin{thm}
	\label{real_or_tropical}
	Let $F$ be a totally preordered semifield. Then $F$ is multiplicatively Archimedean if and only if it order embeds into one of the following preordered semifields:
	\[
		\R_+, \qquad \R_+^\op, \qquad \TR_+, \qquad \TR_+^\op.
	\]
\end{thm}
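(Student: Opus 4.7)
The \emph{if} direction is immediate: each of $\R_+, \R_+^\op, \TR_+, \TR_+^\op$ is multiplicatively Archimedean, and this property is preserved under order embeddings into any subsemifield. For the converse, by replacing $F$ with $F^\op$ if necessary (which preserves all hypotheses), we may assume $1 > 0$ in $F$: totality gives $1 > 0$ or $1 < 0$, and $1 \approx 0$ is excluded by the definition of preordered semifield.

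Since $\approx$ is a semiring congruence, the multiplicative quotient $F^\times / \approx$ is a totally ordered abelian group, and the multiplicative Archimedean hypothesis is precisely Archimedeanicity in the classical sense for this group. H\"older's theorem therefore produces an order embedding $F^\times / \approx \hookrightarrow (\R_{>0}, \cdot)$, unique up to the scaling automorphisms $c \mapsto c^s$ ($s > 0$). Composing with the quotient and extending by $\phi(0) := 0$ yields a monotone multiplicative $\phi : F \to \R_+$; the task is to normalize $\phi$ and to verify that it either respects addition (landing in $\R_+$) or sends addition to the maximum (landing in $\TR_+$).

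Suppose first that $1 + 1 \approx 1$. Then for any $x \le y$ in $F^\times$ the chain $y \le x + y \le 2y \approx y$ forces $x + y \approx \max(x,y)$, so $\phi$ is a monotone semiring homomorphism $F \to \TR_+$ under any normalization, and it is an order embedding because H\"older already makes it one on $F^\times / \approx$.

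In the remaining case $1 + 1 > 1$, the key preliminary is that $\N \to F$ is strictly order-preserving: if $n > m$ in $\N$ but $n \approx m$ in $F$, then dividing by $m$ yields $1 + k/m \approx 1$ with $k := n - m \ge 1$, and the expansion inequality $(1 + k/m)^m \ge 1 + k \ge 2$ combined with $(1 + k/m)^m \approx 1$ would give $2 \le 1$, contradicting $1 + 1 > 1$. Hence $\Q_+$ embeds order-faithfully into $F$, and after normalizing $\phi(1 + 1) = 2$ the H\"older uniqueness clause forces $\phi(n) = n$ for every $n \in \N$, since the comparisons $2^k \le m^\ell$ inside $F$ agree with those in $\N$ and the ratios extracted by H\"older's construction therefore recover $\log_2 m$. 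So $\phi|_{\Q_+}$ is the identity. For arbitrary $x, y \in F^\times$ and rationals $r < \phi(x)$, $s < \phi(y)$, monotonicity of $\phi$ forces $r < x$ and $s < y$ in $F$, hence $r + s \le x + y$ and $r + s = \phi(r + s) \le \phi(x + y)$; taking the supremum gives $\phi(x) + \phi(y) \le \phi(x + y)$, and the reverse inequality follows analogously by approximating from above. Thus $\phi$ is additive, and the resulting semiring homomorphism $F \to \R_+$ is an order embedding since H\"older is injective on $F^\times / \approx$. The main obstacle is this last case: pinning down $\phi|_{\Q_+}$ via H\"older's uniqueness and then executing the rational approximation of additivity.
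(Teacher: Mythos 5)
Your proof is correct and takes essentially the same route as the paper: H\"older's theorem applied to $F^\times/\!\approx$, a case split between idempotent addition (giving an embedding into $\TR_+$) and $1+1>1$ (giving an embedding into $\R_+$), normalization so that $\phi$ fixes $\Q_+$, and the rational sup/inf approximation to establish additivity. The only cosmetic differences are that you phrase the dichotomy via $1+1\approx 1$ rather than non-injectivity of $\N \to F$, and you pin down the normalization through order comparisons $2^k \le m^\ell$ (H\"older uniqueness) instead of the monotone Cauchy functional equation used in the paper.
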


These four cases are mutually exclusive: an embedding into $\R_+$ or $\TR_+$ requires $1 \ge 0$, while an embedding into their opposites requires $1 \le 0$. In the former case, an embedding into $\R_+$ requires $1 + 1 > 1$, while an embedding into $\TR_+$ requires $1 + 1 \approx 1$.

\begin{proof}
	We assume $1 \ge 0$ in $F$ without loss of generality, and then show that $F$ is multiplicatively Archimedean if and only if it order embeds into $\R_+$ or $\TR_+$. Since the latter preordered semifields are clearly multiplicatively Archimedean, the ``if'' direction is done.

	For the ``only if'' part, consider the multiplicative group $F^\times$ as a totally preordered group. Since it is Archimedean by assumption, a classical result of H\"older~\cite[Theorem~XIII.12]{birkhoff} implies that there is an order embedding of ordered abelian groups $\alpha : F^\times \to (\Rplus,\cdot)$. Here we use the multiplicative group $(\Rplus,\cdot)$ instead of the usual additive group $(\R,+)$ for the target of the embedding, which is possible since these two are isomorphic, and multiplicativity is what we want.

	We have $n \not\approx 0$ in $F$ for all $n \in \Nplus$ (\Cref{approx_zero}). We now distinguish two cases:
	\begin{itemize}
		\item We have $m \approx n$ in $F$ for two distinct natural numbers $m$ and $n$. 

			Assuming that $m > n$ in $\N$ without loss of generality, it follows that $n + 1 \approx n$ in $F$. An induction argument then shows that all natural numbers above $n$ are $\approx$-equivalent to $n$. We therefore obtain $1 = \frac{n}{n} \approx \frac{2n}{n} = 2$. But this makes addition in $F$ idempotent up to $\approx$, in the sense that $x + x \approx x$. Hence addition is equal to joins with respect to the underlying total preorder: we trivially have $x,y \le x+y$; and if $x,y \le z$, then also
			\[
				x + y \le z + z \approx z.
			\]
			Therefore $x + y$ is the join of $x$ and $y$, or equivalently $x + y \approx \max(x,y)$.
		
			Then since $\alpha(\max(x,y)) = \max(\alpha(x),\alpha(y))$ holds automatically, $\alpha$ is already a semiring homomorphism $F \to \TR_+$ when extended from $F^\times$ to $F$ via $\alpha(0) \coloneqq 0$. This is the desired order embedding.
		\item We have $m \not\approx n$ for all distinct natural numbers $m$ and $n$.

			Now the inclusion $\Q_+ \subseteq F$ is an order embedding.
			Restricting $\alpha$ to $\Qplus$ gives an order embedding $\Qplus \to \Rplus$ between multiplicative groups. By the monotone case of the Cauchy functional equation, it is therefore of the form $q \mapsto q^s$ for some exponent $s \in [0,\infty)$, where in our case $s \neq 0$ since it is an order embedding. Upon replacing $\alpha$ by $\alpha^{1/s}$, we can therefore assume without loss of generality that $\alpha(q) = q$ for every $q \in \Qplus$.

			It remains to be shown that such $\alpha$ is additive. To this end, for given $x \in F^\times$ consider the elementary fact that every real number is the supremum of its rational lower bounds and the infimum of its rational upper bounds,
			\[
				\alpha(x) = \sup \,\left\{q \in \Qplus \mid q \le \alpha(x)\right\} = \inf \,\left\{q \in \Qplus \mid q \ge \alpha(x)\right\}.
			\]
			Since $\alpha$ is an order embedding and $q = \alpha(q)$ for every $q \in \Qplus$, the condition $q \le \alpha(x)$ in $\Rplus$ is equivalent to $q \le x$ in $F$, so that
			\[
				\alpha(x) = \sup \,\left\{q \in \Q_+ \mid q \le x\right\} = \inf \,\left\{q \in \Q_+ \mid q \ge x\right\}.
			\]
			Using the supremum formula gives, for all $x, y \in F^\times$
			\begin{align*}
				\alpha(x + y)	& = \sup \,\left\{q \in \Qplus \mid q \le x + y\right\} \\[4pt]
				& \ge \sup \,\left\{q_x \in \Qplus \mid q_x \le x\right\} + \sup \,\left\{q_y \in \Qplus \mid q_y \le y\right\} \\[4pt]
				& = \alpha(x) + \alpha(y),
			\end{align*}
			while the other inequality $\alpha(x + y) \le \alpha(x) + \alpha(y)$ follows similarly from the infimum formula. \qedhere
	\end{itemize}
\end{proof}

\begin{cor}
	\label{arch_truncate}
	Let $F$ be a totally preordered semifield of polynomial growth. Then there is a monotone homomorphism $\phi : F \to \mathbb{K}$ for some $\mathbb{K} \in \{\R_+, \R_+^\op, \TR_+, \TR_+^\op\}$ such that $\phi(u) > 1$ for every power universal element $u > 1$.
\end{cor}

\begin{proof}
	Fix a power universal element $u > 1$.\footnote{This can be assumed to hold with strict inequality, since $u \approx 1$ is power universal only if all nonzero elements are $\approx$-equivalent, which makes $F$ equivalent to $\B$ or $\B^\op$, for which the statement is trivially true.}
	Then the new preorder $\le_0$ defined as
	\[
		x \le_0 y \qquad : \Longleftrightarrow \qquad x^n \le y^n u \quad \forall n \in \Nplus,
	\]
	makes $F$ again into a totally preordered semifield, and such that $\le_0$ extends $\le$. In particular this preordered semifield is again totally preordered. It is multiplicatively Archimedean since $x >_0 1$ implies that there is $n \in \Nplus$ with $x^n > u$, making $x$ power universal as well.
	Hence \Cref{real_or_tropical} delivers a homomorphism $\phi : F \to \mathbb{K}$ that is an order embedding with respect to $\le_0$, and in particular $\le$-monotone.
	Also $\phi(u) > 1$ since $u >_0 1$.
	If $v$ is any other power universal element, then we must have $v^k \ge u$ for some $k \in \N$, and therefore also $\phi(v) > 1$.
\end{proof}

We also call the preordered semifield $(F,\le_0)$ constructed in the proof the \newterm{multiplicatively Archimedean truncation} of $F$.

While these results are useful criteria for the existence of an order embedding, 
we can further classify multiplicatively Archimedean totally preordered semifields up to equivalence if we also assume a suitable form of Dedekind completeness. The following definition is a variant of the standard one for ordered fields. We assume $1 \ge 0$ for simplicity and without loss of generality.

\begin{defn}
	Let $F$ be a totally preordered semifield with $1 \ge 0$. Then a \newterm{Dedekind cut} in $F$ is a subset $L \subseteq F$ such that:
	\begin{enumerate}[label=(\roman*)]
		\item $L$ is downward closed.
		\item If $x \in L$, then there is $y \in L$ with $y > x$.
		\item $L \neq F$.
	\end{enumerate}
\end{defn}

We associate to every $x \in F$ the Dedekind cut
\[
	L_x \coloneqq \{y \in F \mid y < x\},
\]
and in particular $L_0 = \emptyset$. We say that $F$ is \newterm{Dedekind complete} if every Dedekind cut is equal to some $L_x$.

By analogy with the classical result that $\R$ is the only Dedekind complete Archimedean totally ordered field, one might hope that if a multiplicatively Archimedean totally preordered semifield $F$ is in addition Dedekind complete, then $F / \!\approx$ would be isomorphic to $\R_+$ or to $\TR_+$. However, this is not the case, as $\TR_+$ has Dedekind complete subsemifields, namely the Boolean semifield $\B$ or more generally the collection of integer powers of any fixed nonzero element (together with $0$). But these are be the only counterexamples.
	
\begin{thm}
	\label{dedekind_class}
	If $F$ is a Dedekind complete and multiplicatively Archimedean totally preordered semifield with $1 \ge 0$, then $F$ is equivalent to exactly one of the following:
	\[
		\R_+, \qquad \B, \qquad \TZ_+, \qquad \TR_+.
	\]
	Dually, with $1 \le 0$ instead, we obtain the four possibilities $\R_+^\op$, $\B^\op$, $\TZ_+^\op$ and $\TR_+^\op$.
\end{thm}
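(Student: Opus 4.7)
The plan is to apply \Cref{real_or_tropical} to obtain an order embedding of $F$ into $\R_+$ or $\TR_+$ (the opposite cases being excluded by the assumption $1 \ge 0$), and then determine which sub-semifields of $\R_+$ and $\TR_+$ can be Dedekind complete. Dedekind completeness passes to the quotient $F / \approx$: every Dedekind cut in $F / \approx$ pulls back along the quotient map to a Dedekind cut in $F$, whose supremum in $F$ then descends to a supremum for the original cut. So without loss of generality we may replace $F$ by its antisymmetric quotient and identify it, via the embedding, with a sub-semifield of either $\R_+$ or $\TR_+$.

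In the real case $F \subseteq \R_+$, strictness of $F$ forces characteristic zero, hence $\Q_+ \subseteq F$; after the rescaling of $\alpha$ performed in the proof of \Cref{real_or_tropical}, this inclusion is the standard one $\Q_+ \subseteq \R_+$. For any $r \in \R_+$, the set $L := \{y \in F : y < r\}$ is a Dedekind cut in $F$: it is downward closed, has no maximum by density of $\Q_+$ below $r$, and is proper since $\Q_+$ is unbounded above in $\R_+$. Dedekind completeness then provides $s \in F$ with $L = \{y \in F : y < s\}$, and squeezing a rational between $r$ and $s$ (in either direction) forces $s = r$. Hence $r \in F$ for all $r \in \R_+$, so $F = \R_+$.

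In the tropical case $F \subseteq \TR_+$, the multiplicative group $G := F^\times$ is a subgroup of $(\R_{>0}, \cdot)$, so by the classical classification of subgroups of $(\R, +)$ via the logarithm, $G$ is either trivial, infinite cyclic, or dense. If $G = \{1\}$ then $F = \{0, 1\} = \B$. If $G = \langle a \rangle$ for some $a > 1$, then sending $a^n \mapsto n$ and $0 \mapsto -\infty$ identifies $F$ with $\TZ_+$ in the additive picture; Dedekind completeness is routine because the only downward closed proper subset of $\TZ_+$ without a maximum is the empty set, with supremum $-\infty$. If $G$ is dense in $\R_{>0}$, the same Dedekind-cut-plus-density argument as in the real case (with $G$ playing the role of $\Q_+$) yields $r \in F$ for every $r > 0$, so $F = \TR_+$. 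The main delicate point is checking that this density argument behaves well in the tropical case, where addition is $\max$; this becomes clean once one notes that the Dedekind cut construction is purely order-theoretic and that the sub-semifield structure of $G \cup \{0\} \subseteq \TR_+$ is fully recovered from the total order on $G$ together with its multiplicative structure, reducing matters to the purely group-theoretic classification of subgroups of $(\R_{>0}, \cdot)$.
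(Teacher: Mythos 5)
Your proposal is correct and follows essentially the same route as the paper: reduce via \Cref{real_or_tropical} to classifying the Dedekind complete subsemifields of $\R_+$ and $\TR_+$, use density of $\Q_+$ and a cut argument in the real case, and use the classification of subgroups of $(\R,+)$ (singly generated versus dense) in the tropical case. Your explicit remarks that Dedekind completeness descends to $F/\!\approx$ and that the dense tropical case reduces to the same purely order-theoretic cut argument are points the paper leaves implicit, but the substance of the argument is the same.
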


\begin{proof}
	By \Cref{real_or_tropical}, it is enough to show that the above list contains exactly the Dedekind complete subsemifields of $\R_+$ and $\TR_+$. For $\R_+$, every subsemifield must contain $\Q_+$. Since every nonnegative real number defines a Dedekind cut in $F$ and these Dedekind cuts are all different, $\R_+$ itself is the only Dedekind complete subsemifield.
	
	For $\TR_+$, a Dedekind complete subsemifield in particular contains a subgroup of the multiplicative group $\TR_+^\times$. In terms of the additive picture of $\TR_+$, we use the standard fact that a subgroup of $(\R,+)$ is singly generated or dense. Hence $F^\times$ for a given Dedekind complete subsemifield $F \subseteq \TR_+$ either consists of powers of a given element, which makes it isomorphic to $\B$ or to $\TZ_+$, or it is all of $\TR_+$ by an argument analogous to the previous paragraph in the dense case.
\end{proof}

\section{A peculiar polynomial identity}
\label{curioussec}

We now turn to the derivation of some deeper results. 
This starts with a polynomial identity which may on first look seem unrelated to our overarching theme, but will turn out to be a central ingredient in the proof of our abstract Vergleichsstellensatz for preordered semifields (\Cref{abstract_semifield_pss}).

\begin{lem}
	\label{curiouslem}
	For $n \in \N$, let $\underline{A} = \left(A_0,\ldots,A_n\right)$ and $\underline{B} = \left(B_0,\ldots,B_n\right)$ be finite sequences of variables. Then in the semiring $\N[\underline{A},\underline{B},X,Y]$, we have
	\begin{align*}
		& \sum_{i=0}^n \: \left( A_i \sum_{j=0}^n B_j Y^j + B_i \sum_{j=0}^n A_j X^j \right) \left( \sum_{k=1}^i X^{i-k} Y^{k-1} \right) \\[8pt]
		= & \: \sum_{i=0}^n \: \left( A_i \sum_{j=0}^n B_j X^j + B_i \sum_{j=0}^n A_j Y^j \right) \left( \sum_{k=1}^i X^{i-k} Y^{k-1} \right).
	\end{align*}
\end{lem}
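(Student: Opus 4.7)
My plan is to reduce the identity to a cleaner combinatorial statement and then verify that statement by a direct partition-of-monomials argument. Abbreviate
\[
 S_i \;:=\; \sum_{k=1}^{i} X^{i-k} Y^{k-1} \;\in\; \N[X,Y],
\]
so that $S_0 = 0$, $S_1 = 1$, $S_2 = X + Y$, and in general $S_i$ is the ``semiring avatar'' of $(X^i - Y^i)/(X-Y)$. The factor $\sum_{k=1}^{i} X^{i-k}Y^{k-1}$ appearing in the statement is exactly $S_i$, so both sides of the claimed identity are double sums indexed by $i,j \in \{0,\ldots,n\}$ after distribution. In each of the four double sums, one can freely rename the summation indices; applying $i \leftrightarrow j$ to the two ``$B_i A_j$'' double sums, the identity to be proved is equivalent to
\[
 \sum_{i,j=0}^{n} A_i B_j \bigl(S_i Y^j + S_j X^i\bigr) \;=\; \sum_{i,j=0}^{n} A_i B_j \bigl(S_i X^j + S_j Y^i\bigr).
\]
Since the $A_i B_j$ are distinct monomials in the freely chosen variables $\underline{A},\underline{B}$, this reduces to the termwise identity
\[
 S_i Y^j + S_j X^i \;=\; S_i X^j + S_j Y^i \qquad (i,j \ge 0) \tag{$\ast$}
\]
in the subsemiring $\N[X,Y]$.

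The main content is then the combinatorial identity $(\ast)$, which I will prove by showing that both sides compute the same ``complete homogeneous'' polynomial of degree $i+j-1$. Concretely, for $i,j \ge 1$, the monomials appearing in $S_i Y^j$ are exactly those $X^a Y^b$ with $a + b = i+j-1$ and $0 \le a \le i-1$, while those in $S_j X^i$ are the $X^a Y^b$ with $a+b = i+j-1$ and $i \le a \le i+j-1$. These two ranges of $a$ partition $\{0,1,\ldots,i+j-1\}$, so
\[
 S_i Y^j + S_j X^i \;=\; \sum_{a+b=i+j-1} X^a Y^b \;=\; S_{i+j}.
\]
An entirely analogous partition of the exponent range into $a < j$ versus $a \ge j$ shows that $S_i X^j + S_j Y^i$ also equals $S_{i+j}$. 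The edge cases $i = 0$ or $j = 0$ are trivial since $S_0 = 0$ makes both sides collapse to $S_{\max(i,j)}$.

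I do not expect a real obstacle here once the substitution $S_i$ is introduced: the whole point is that after rearranging and pairing $A_i B_j$-coefficients, the desired equality is just the bookkeeping fact that $S_{i+j}$ admits two natural ``split at exponent $i$'' and ``split at exponent $j$'' decompositions into a sum of two monomial blocks, each of the form $S_? \cdot (\text{monomial})$. The computation is naturally performed in the semiring $\N[\underline{A},\underline{B},X,Y]$ without any need to pass through $\Z$, which is important because the result is to be applied inside a preordered semifield context where subtraction is unavailable.
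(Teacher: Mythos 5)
Your proof is correct and is essentially the same argument as the paper's: after expanding, both identify the coefficient of $A_iB_j$ as the full homogeneous sum $\sum_{\ell+m=i+j-1} X^\ell Y^m$. The paper packages this as $X \leftrightarrow Y$ symmetry plus a multiplicity count, while you package it as the termwise identity $S_i Y^j + S_j X^i = S_{i+j} = S_i X^j + S_j Y^i$, but the underlying monomial bookkeeping is identical.
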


As our notation suggests, we find it useful to think of $X$ and $Y$ as the primary polynomial variables and of the $\underline{A}$ and $\underline{B}$ as playing the role of coefficients, but formally the latter are also just variables. Note that the two sides of the equation are identical except for $Y^j$ in the first factor on the left replaced by $X^j$ on the right, and vice versa.

\begin{proof}
	Reindexing shows that the second factor on each side is symmetric in $X$ and $Y$.
	It follows that the two sides of the equation only differ by exchanging $X$ and $Y$.
	Hence we need to show that the left-hand side is invariant under $X \leftrightarrow Y$.

	For any $i,j \in \{0,\ldots,n\}$ and $\ell,m \in \{0,\ldots,2n-1\}$, we count how many times the monomial $A_i B_j X^\ell Y^m$ occurs on the left-hand side. Multiplying out and inspecting shows that the multiplicity of this term is given by
	\begin{align}
		\begin{split}
			+1 & \qquad \text{if } \ell + m = i + j - 1 \text{ and } 0 \le \ell \le i - 1, \\
			+1 & \qquad \text{if } \ell + m = i + j - 1 \text{ and } 0 \le m \le j - 1, \\
		\end{split}
	\end{align}
	where these two conditions are mutually exclusive. Since the bounds on $m$ in the second condition are equivalent to $i \le \ell \le i + j - 1$, exactly one of the two inequality conditions holds as soon as the equation $\ell + m = i + j - 1$ holds. Hence the left-hand side is equal to
	\[
		\sum_{i,j=0}^n A_i B_j \sum_{\ell,m \in \N \:\mid\: \ell + m = i + j - 1} X^\ell Y^m.
	\]
	This is indeed manifestly symmetric under $X \leftrightarrow Y$.
\end{proof}

\section{An abstract Vergleichsstellensatz for preordered semifields}
\label{semifields}

Throughout this section, $F$ is any preordered semifield.

\begin{lem}
	\label{fourier}
	Suppose that elements $r_1,\ldots,r_n,x,y \in F$ satisfy
	\[
		\sum_{i=1}^n r_i x^i \le \sum_{i=1}^n r_i y^i,
	\]
	where $r_i \neq 0$ for at least one $i$. Then $x \le y$.
\end{lem}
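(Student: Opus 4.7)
The plan is to convert the additive hypothesis $D \le C$, where $D := \sum_{i=1}^n r_i x^i$ and $C := \sum_{i=1}^n r_i y^i$, into a purely multiplicative inequality by identifying a common ``remainder'' that appears additively on both sides after multiplication by a suitable element. Invertibility in the semifield $F$ will then yield the conclusion.

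The first step is to establish the semiring identities
\[
	x \cdot s_i \;=\; x^i + xy \cdot s_{i-1}, \qquad y \cdot s_i \;=\; y^i + xy \cdot s_{i-1},
\]
valid in any commutative semiring, where $s_i := \sum_{k=1}^i x^{i-k} y^{k-1}$ and $s_0 := 0$; both are immediate by direct expansion, e.g.\ $x s_i = x^i + x^{i-1}y + \cdots + xy^{i-1} = x^i + xy \cdot s_{i-1}$. Setting $T := \sum_{i=1}^n r_i s_i$ and $T' := \sum_{i=1}^n r_i s_{i-1}$, multiplying by $r_i$ and summing produces the two parallel factorizations
\[
	xT \;=\; D + xyT', \qquad yT \;=\; C + xyT',
\]
as genuine identities in the semiring (no subtraction needed).

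With these in hand, monotonicity of addition applied to the hypothesis $D \le C$ yields $D + xyT' \le C + xyT'$, which by the factorizations above is exactly $xT \le yT$. The desired conclusion $x \le y$ then follows by multiplying through by $T^{-1}$, provided $T$ is invertible in $F$. To secure this, I would invoke strictness of $F$ (\Cref{strict_semifield}): provided $(x,y) \ne (0,0)$, each $s_i$ for $i \ge 1$ is a sum of products of elements of $F^\times \cup \{0\}$ with at least one nonzero summand, hence nonzero by zerosumfreeness; thus $r_i s_i = 0$ would force $r_i = 0$, and since by hypothesis some $r_i \ne 0$, one concludes $T \ne 0$ and therefore $T \in F^\times$. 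The remaining corner case $x = y = 0$ renders the conclusion trivial.

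The step requiring the most care will be the verification that $T \ne 0$ in the boundary cases $x = 0$ or $y = 0$, where the explicit form of $s_i$ collapses significantly; it is here that strictness of $F$ and the hypothesis of some nonzero $r_i$ must combine precisely, since without strictness one could have $s_i = 0$ despite $x$ or $y$ being nonzero and the multiplicative cancellation step would break down.
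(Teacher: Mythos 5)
Your proof is correct and is essentially the paper's own argument in slightly different packaging: your element $T=\sum_i r_i s_i$ is exactly the double sum $\sum_{j=1}^n\sum_{i=j}^n r_i x^{j-1}y^{i-j}$ that the paper multiplies by $x$ and $y$, and your identities $xT = D + xyT'$, $yT = C + xyT'$ reproduce its inequality chain, followed by the same cancellation of the invertible factor $T$ (with invertibility secured, as in the paper, by strictness of the preordered semifield). The only cosmetic difference is that you avoid the paper's ``reverse the order so that $x\neq 0$'' reduction by noting that $T\neq 0$ already when $(x,y)\neq(0,0)$.
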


\begin{proof}
	The claim is trivial if $x = y = 0$. Upon reversing the order if necessary, we can therefore assume without loss of generality that $x$ is nonzero and hence invertible. Then 
	\begin{align}
		\begin{split}
		\label{overallineq}
			x \sum_{j=1}^n \sum_{i=j}^n r_i x^{j-1} y^{i-j}		& = \sum_{j=1}^n r_j x^j + \sum_{j=1}^n \sum_{i=j+1}^n r_i x^j y^{i-j} \\[4pt]
										& \le \sum_{i=1}^n r_i y^i + \sum_{j=2}^n \sum_{i=j}^n r_i x^{j-1} y^{i-j+1} \\[4pt]
										& = y \sum_{j=1}^n \sum_{i=j}^n r_i x^{j-1} y^{i-j},
		\end{split}
	\end{align}
	where the inequality step uses the assumption. Since $x$ is invertible and some $r_i$ is invertible, it follows that also some term $r_i x^{j-1} y^{i-j}$ is invertible, namely in particular a suitable $j = i$ one. Therefore also the entire expression $\sum_{j=1}^n \sum_{i=j}^n r_i x^{j-1} y^{i-j}$ is invertible. Dividing the overall inequality \eqref{overallineq} by this expression gives the desired $x \le y$.
\end{proof}

\begin{rem}
	\label{geom_fourier}
	As a special case of \Cref{fourier}, we have the perhaps surprising equivalence that for any $n \in \Nplus$,
	\beq
		\label{depower}
		x \le y \qquad \Longleftrightarrow \qquad x^n \le y^n.
	\eeq
	It is instructive to spell out the relevant argument separately. For the nontrivial implication, the assumption $x^n \le y^n$ gives
	\[
		x \sum_{i=1}^n x^{i-1} y^{n-i} = \sum_{i=1}^n x^i y^{n-i} \le \sum_{i=1}^n x^{i-1} y^{n-i+1} = y \sum_{i=1}^n x^{i-1} y^{n-i},
	\]
	and hence $x \le y$ upon dividing by the sum, which is nonzero as soon as $x$ or $y$ is.
\end{rem}

If $x,y \in F$ satisfy $x \not\le y$, then by \Cref{generate_order} there is a smallest semifield preorder $\preceq$ extending the given $\le$ in the sense that $a \le b$ implies $a \preceq b$ and moreover such that $x \preceq y$. We call $\preceq$ the semifield preorder obtained by \newterm{adjoining} $x \le y$. We establish an explicit characterization of this preorder.

\begin{lem}
	\label{adjoin}
	Let $x,y \in F$, and let $\preceq$ denote the semiring preorder obtained by adjoining $x \le y$. Then:
	\begin{enumerate}
		\item For any $a,b \in F$, we have $a \preceq b$ if and only if there is a finite sequence $r_0,\ldots,r_n \in F$ such that
			\beq
				\label{adjoin_ineqs}
				a \le \sum_{i=0}^n r_i x^i, \qquad \sum_{i=0}^n r_i y^i \le b.
			\eeq
		\item If $y \not\le x$, then also $y \not\preceq x$ and $\preceq$ makes $F$ into a preordered semifield.
	\end{enumerate}
\end{lem}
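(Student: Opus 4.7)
My plan for (a) is a double inclusion between the adjoined preorder $\preceq$ and the relation $R$ defined by the existence of polynomial witnesses as in~\eqref{adjoin_ineqs}. The easy direction is that any $a R b$ yields the chain
\[
	a \le \sum_i r_i x^i \preceq \sum_i r_i y^i \le b ,
\]
where the middle step uses $x \preceq y$: iterating monotonicity of multiplication gives $x^i \preceq y^i$, and scaling by $r_i$ and summing then yields the polynomial comparison, so $a \preceq b$. For the converse I would verify that $R$ itself is a semiring preorder containing $\le$ and the pair $(x,y)$, whence $R \supseteq \preceq$ by minimality. Reflexivity, containment of $\le$ (take $n=0$, $r_0 = a$), containment of $(x,y)$ (take $n=1$, $r_0 = 0$, $r_1 = 1$), and closure of $R$ under addition and multiplication (by absorbing summands into $r_0$ and distributing factors across the coefficients) are all straightforward.

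The main obstacle is transitivity of $R$. Given $a R b$ via a polynomial $p(t) = \sum r_i t^i$ and $b R c$ via another polynomial $q$, one has in particular $p(y) \le b \le q(x)$, and I need to exhibit a single polynomial $h$ with $a \le h(x)$ and $h(y) \le c$. My plan is to split into cases according to whether $p(y)$ vanishes. In the generic case $p(y) \neq 0$, the element $p(y)$ is invertible and I would set
\[
	h(t) := p(y)^{-1} \, p(t) \, q(t) ,
\]
so that $h(y) = q(y) \le c$ immediately, while multiplying $a \le p(x)$ by $p(y)$ and $p(y) \le q(x)$ by $p(x)$ chains to $a \, p(y) \le p(x) \, q(x)$, which becomes $a \le h(x)$ after cancelling the invertible $p(y)$. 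In the degenerate case $p(y) = 0$ I would instead take $h := p + q$: the $p(y)$ term drops out, leaving $h(y) = q(y) \le c$; and since $q(x) \ge b \ge p(y) = 0$, one has $h(x) = p(x) + q(x) \ge p(x) \ge a$. (Strictness of $F$ via \Cref{strict_semifield} ensures that these two cases exhaust all possibilities.)

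Part (b) follows cleanly from (a) together with \Cref{fourier}. Assume $y \preceq x$; by (a) there are $r_0, \ldots, r_n$ with $y \le p(x)$ and $p(y) \le x$, where $p(t) := \sum_i r_i t^i$. Multiplying the first inequality by $p(y)$ and the second by $p(x)$ and chaining gives
\[
	\sum_i r_i y^{i+1} \: = \: y \, p(y) \: \le \: p(x) \, p(y) \: \le \: p(x) \, x \: = \: \sum_i r_i x^{i+1} .
\]
If some $r_i$ is nonzero, this has exactly the form required by \Cref{fourier} for the shifted coefficient sequence $s_{i+1} := r_i$, $s_0 := 0$, and produces $y \le x$; if every $r_i$ vanishes, both sums equal $0$, so $y \le 0 \le x$ again gives $y \le x$. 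Either way the assumption $y \not\le x$ is contradicted, so $y \not\preceq x$. Finally, the semifield condition $1 \not\approx 0$ with respect to $\preceq$ is automatic: if both $1 \preceq 0$ and $0 \preceq 1$ held, multiplying the first by $y$ and the second by $x$ would give $y \preceq 0 \preceq x$, hence $y \preceq x$, contradicting what we have just shown.
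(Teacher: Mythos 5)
Your proposal is correct and takes essentially the same route as the paper: the same explicit witness characterization, with transitivity proved by combining the two witness polynomials (your $h = p(y)^{-1}\,p\,q$ is exactly the paper's convolution witness, normalized by $p(y)^{-1}$ rather than by cancelling $b$, with the degenerate case split on $p(y)=0$ instead of $b=0$ but handled by the same sum $p+q$), and part (b) deduced from \Cref{fourier} after the same multiply-and-chain manoeuvre. Your explicit treatment of the all-zero coefficient case and of $1 \not\approx 0$ under $\preceq$ fills in details the paper leaves implicit; the only nitpick is that invertibility of a nonzero $p(y)$ needs just the semifield property, not strictness.
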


Note that in contrast to \Cref{fourier}, the sums now start at $i = 0$. 

\begin{proof}
	\begin{enumerate}
		\item Let us tentatively write $\preceq$ for the relation defined by \eqref{adjoin_ineqs}. Then it is straightforward to see that $\preceq$ respects addition and multiplication, in the sense that $a \preceq b$ implies $c + a \preceq c + b$ and $ca \preceq cb$ for any $c \in F$. In order to see that $\preceq$ defines a semiring preorder, only transitivity is missing.

			Thus assume that we have $a,b,c\in F$ and finite sequences $(r_i)_{i=0}^m$ and $(s_i)_{i=0}^n$ which witness $a \preceq b \preceq c$,
			\[
				a \le \sum_{i=0}^m r_i x^i, \qquad \sum_{i=0}^m r_i y^i \le b \le \sum_{i=0}^n s_i x^i, \qquad \sum_{i=0}^n s_i y^i \le c.
			\]
			We then first prove that $a b \preceq c b$. Upon padding both sequences by zeroes as far as necessary, define the sequence $(t_i)_{i=0}^{m+n}$ as their convolution,
			\[
				t_j \coloneqq \sum_{i=0}^j r_i s_{j-i} \qquad \forall j = 0, \ldots, m+n.
			\]
			This gives
			\[
				a b \le \left( \sum_i r_i x^i \right) \left( \sum_j s_j x^j \right) = \sum_i t_i x^i,
			\]
			where all sums extend as far as necessary, and similarly
			\[
				\sum_i t_i y^i = \left( \sum_i r_i y^i \right) \left( \sum_j s_j y^j \right) \le b c,
			\]
			so that we can conclude the claimed $a b \preceq c b$. Now if $b \neq 0$, then monotonicity of multiplication by $b^{-1}$ gives the desired $a \preceq c$. For $b = 0$, we argue differently, using
			\[
				a \le \sum_i (r_i + s_i) x^i, \qquad \sum_i (r_i + s_i) y^i \le c,
			\]
			which again gives $a \preceq c$.

			It is obvious that $\preceq$ extends $\le$ and in addition satisfies $x \preceq y$.
			Finally, it remains to prove that $\preceq$ has the claimed minimality property. Thus suppose that $\leqslant$ is any other semiring preorder which extends $\le$ and satisfies $x \leqslant y$, and assume $a \preceq b$ with witnessing sequence $(r_i)$. Then we get $x^i \leqslant y^i$ by $x \leqslant y$ for any $i \in \N$, and therefore
			\[
				a \leqslant \sum_i r_i x^i \leqslant \sum_i r_i y^i \leqslant b,
			\]
			since $\leqslant$ extends $\le$. This gives the desired $a \leqslant b$.
		\item Suppose that we had $y \preceq x$. This would mean that we had a finite sequence $(r_i)_{i=0}^n$ such that 
			\[
				y \le \sum_{i=0}^n r_i x^i, \qquad  \sum_{i=0}^n r_i y^i \le x.
			\]
			Using this in the form
			\[
				\sum_{i=0}^n r_i y^{i+1} \le x y \le \sum_{i=0}^n r_i x^{i+1}
			\]
			and reindexing lets us deduce the claim $y \le x$ from \Cref{fourier} if $r_i \neq 0$ for some $i$. If all $r_i = 0$, then $y \le 0 \le x$ trivially implies the desired $y \le x$.

			To show that $\preceq$ is still a semifield preorder, by \Cref{approx_zero} it remains to note that $\preceq$ is not complete, which we have just shown.
			\qedhere
	\end{enumerate}
\end{proof}

\begin{lem}
	\label{notboth}
	Let $a, b, x, y \in F$. Suppose that adjoining $x \le y$ results in $a \preceq b$, as does adjoining $y \le x$. Then already $a \le b$.
\end{lem}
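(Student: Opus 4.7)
The plan is to apply the polynomial identity of \Cref{curiouslem} directly. Using \Cref{adjoin}, pick witnessing sequences $(r_i)$ and $(s_i)$ for the two hypotheses, padded with zeros to share a common length $n+1$, so that
\begin{gather*}
a \le \sum_{i=0}^n r_i x^i, \qquad \sum_{i=0}^n r_i y^i \le b, \\
a \le \sum_{i=0}^n s_i y^i, \qquad \sum_{i=0}^n s_i x^i \le b.
\end{gather*}
Substituting $A_i \mapsto r_i$, $B_i \mapsto s_i$, $X \mapsto x$, $Y \mapsto y$ in the identity of \Cref{curiouslem} yields an equation between two elements of $F$.

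The key move will be to bound the common value of this equation from both sides. Multiplying the hypotheses $\sum_j s_j y^j \ge a$ and $\sum_j r_j x^j \ge a$ by the elements $r_i \sum_{k=1}^i x^{i-k} y^{k-1}$ and $s_i \sum_{k=1}^i x^{i-k} y^{k-1}$ respectively and summing over $i$, I will bound the left-hand side of the identity from below by $aC$, where
\[
C := \sum_{i=1}^n (r_i + s_i) \sum_{k=1}^i x^{i-k} y^{k-1}.
\]
Symmetrically, applying $\sum_j s_j x^j \le b$ and $\sum_j r_j y^j \le b$ to the right-hand side of the identity yields an upper bound of $bC$. Combined with the identity itself, this gives $aC \le bC$ in $F$.

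If $C$ is nonzero, it is invertible in the semifield $F$, and cancellation produces the desired $a \le b$. If $C = 0$, then zerosumfreeness of the strict semifield $F$ (\Cref{strict_semifield}) forces each summand of $C$ to vanish; when both $x$ and $y$ are nonzero, the absence of zero divisors makes every $x^{i-k} y^{k-1}$ nonzero, so $r_i + s_i = 0$ and hence $r_i = s_i = 0$ for every $i \ge 1$, collapsing the hypotheses to $a \le r_0$ and $r_0 \le b$. The remaining subcases $x = 0$ or $y = 0$ are even simpler, since at least one of the sums $\sum_i r_i x^i$ or $\sum_i r_i y^i$ reduces to the constant term $r_0$ (or the whole computation degenerates), once more giving $a \le r_0 \le b$ directly.

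The real obstacle has already been overcome, namely the discovery of the curious identity of \Cref{curiouslem}; with it in hand, the remaining work is essentially bookkeeping with monotonicity. What makes the proof go through so cleanly is that the identity is precisely engineered so that $x$ and $y$ can be swapped term-wise between the two factors, which is exactly what lets a single element $C$ witness the inequality from both sides simultaneously.
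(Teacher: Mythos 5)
Your proof is correct and follows essentially the same route as the paper's: extract the two witnessing sequences from \Cref{adjoin}, substitute into the identity of \Cref{curiouslem}, sandwich the common value between $aC$ and $bC$, and cancel the invertible factor $C$, with the degenerate situations handled separately (the paper simply disposes of them up front before arguing that $C \neq 0$, whereas you case-split on $C = 0$ afterwards). The only loose point is the mixed subcase where exactly one of $x,y$ is zero: there ``$a \le r_0 \le b$ directly'' is not literally immediate from one sum collapsing to $r_0$, but your own zerosumfreeness argument still applies, since for $i \ge 1$ the sum $\sum_{k=1}^i x^{i-k}y^{k-1}$ contains the nonzero term $x^{i-1}$ or $y^{i-1}$ and is therefore nonzero, so $C = 0$ again forces $r_i = s_i = 0$ for all $i \ge 1$ and hence $a \le r_0 \le b$.
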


\begin{proof}
	Since adjoining $x \le y$ results in $a \preceq b$, \Cref{adjoin} gives $r_0,\ldots,r_n \in F$ such that
	\[
		a \le \sum_{j=0}^n r_j x^j, \qquad \sum_{j=0}^n r_j y^j \le b.
	\]
	Similarly since adjoining $y \le x$ results in $a \preceq b$ as well, we also obtain $s_0,\ldots,s_n \in F$ such that
	\[
		a \le \sum_{j=0}^n s_j y^j, \qquad \sum_{j=0}^n s_j x^j \le b,
	\]
	where by padding with zeroes we have assumed that the sequences $(r_j)$ and $(s_j)$ have the same length. We treat some degenerate cases first. If $x = y = 0$, then trivially also $a \le b$ since then $\preceq$ coincides with $\le$. If $r_i = s_i = 0$ for all $i \ge 1$, then the above inequalities give $a \le r_0 \le b$ and hence also $a \le b$. We thus can assume that $x \neq 0$ or $y \neq 0$, and $r_i \neq 0$ or $s_i \neq 0$ for some $i \ge 1$.
	
	We now combine the previous inequalities with the peculiar polynomial identity of \Cref{curiouslem},
	\begin{align*}
		a \sum_{i=0}^n & \left( r_i + s_i \right) \left( \sum_{k=1}^i x^{i-k} y^{k-1} \right) \\[8pt]
			& \le \sum_{i=0}^n \left( r_i \sum_{j=0}^n s_j y^j + s_i \sum_{j=0}^n r_j x^j \right) \left( \sum_{k=1}^i x^{i-k} y^{k-1} \right) \\[8pt]
			& = \sum_{i=0}^n \left( r_i \sum_{j=0}^n s_j x^j + s_i \sum_{j=0}^n r_j y^j \right) \left( \sum_{k=1}^i x^{i-k} y^{k-1} \right) \\[8pt]
			& \: \le b \sum_{i=0}^n \left( r_i + s_i \right) \left( \sum_{k=1}^i x^{i-k} y^{k-1} \right) .
	\end{align*}
	This implies the claim $a \le b$ upon cancelling, since the non-degeneracy assumptions from the previous paragraph together with $F$ being a strict semifield imply that the factor $\sum_{i=0}^n (r_i + s_i) \sum_{k=1}^i x^{i-k} y^{k-1}$ is nonzero. 
\end{proof}

This puts us in a position to derive the core ingredient of our abstract Vergleichsstellensatz.

\begin{prop}
	\label{key}
	Suppose that $x,y \in F$ are such that $x \not\le y$ and that the preorder on $F$ is maximal with this property. Then $F$ is totally preordered.
\end{prop}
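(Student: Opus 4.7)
The natural approach is a proof by contradiction exploiting the maximality assumption together with \Cref{adjoin} and \Cref{notboth}. Assume $F$ is not totally preordered, so that there exist $a,b \in F$ with $a \not\le b$ and $b \not\le a$. The plan is to produce two strict semifield preorder extensions of $\le$, each of which is forced by maximality to contain $x \le y$, and then to use \Cref{notboth} to deduce $x \le y$ in $F$ itself, contradicting the hypothesis.

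First, adjoin $a \le b$ to the given preorder, obtaining the semiring preorder $\preceq_1$ described by \Cref{adjoin}. Since $b \not\le a$ in $\le$, part~(b) of that lemma guarantees that $\preceq_1$ is again a preordered semifield (in particular $1 \not\approx_{\preceq_1} 0$, so we really remain in the class of objects over which maximality was asserted). Moreover $\preceq_1$ strictly extends $\le$ because $a \preceq_1 b$ while $a \not\le b$. By maximality of $\le$ among preordered semifield structures on $F$ satisfying $x \not\le y$, the extension $\preceq_1$ must therefore violate this property, i.e.\ $x \preceq_1 y$. The symmetric argument, adjoining $b \le a$ instead, yields a preordered semifield preorder $\preceq_2$ strictly extending $\le$ with $x \preceq_2 y$.

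Now apply \Cref{notboth} with the placeholder letters $(a,b,x,y)$ in that lemma replaced by $(x,y,a,b)$: adjoining $a \le b$ to $\le$ yields $x \preceq y$, and so does adjoining $b \le a$, hence $x \le y$ already holds in $F$. This contradicts the standing assumption $x \not\le y$, so $F$ must be totally preordered after all.

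The argument is essentially formal once the preparatory lemmas are in place, so I do not expect any genuine obstacle at this stage; all the substantive work has been done in \Cref{notboth} (and, through it, in the curious polynomial identity of \Cref{curiouslem}). The one point worth double-checking carefully is that adjoining $a \le b$ does not inadvertently collapse $\preceq_1$ to a non-semifield preorder, but \Cref{adjoin}~(b) handles exactly this, using the asymmetry assumption $b \not\le a$ that is available in both directions of our case analysis.
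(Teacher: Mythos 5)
Your proof is correct and follows essentially the same route as the paper: adjoin $a \le b$ and $b \le a$ separately, invoke maximality to force $x \preceq y$ in both extensions, and conclude $x \le y$ via \Cref{notboth}, contradicting $x \not\le y$. Your extra check via \Cref{adjoin}~(b) that each extension remains a preordered semifield is a sensible precaution that the paper leaves implicit.
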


\begin{proof}
	Suppose that $a \not\le b$ and $b \not\le a$ for two arbitrary $a,b \in F$. From this we derive a contradiction with the maximality assumption. Since $F$ carries the maximal semifield preorder which satisfies $x \not\le y$, adjoining $a \le b$ must result in $x \preceq y$, and similarly adjoining $b \le a$ must also give $x \preceq y$. But then \Cref{notboth} implies $x \le y$, which contradicts the assumption $x \not\le y$.
\end{proof}

We can now state and prove our \newterm{abstract Vergleichsstellensatz for preordered semifields}.

\begin{thm}
	\label{abstract_semifield_pss}
	Let $F$ be a preordered semifield. Then its preorder $\le$ is the intersection of all total semifield preorders on $F$ which extend it.
\end{thm}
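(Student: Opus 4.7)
The plan is a standard Zorn's lemma argument culminating in an appeal to \Cref{key}. The easy direction is immediate: any total semifield preorder extending $\le$ contains $\le$ as a subrelation, so $\le$ is contained in the intersection of all such extensions. For the converse, it suffices to show that whenever $x \not\le y$ in $F$, there exists a total semifield preorder $\preceq$ on $F$ extending $\le$ with $x \not\preceq y$.

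Fix such $x,y$ and let $\mathcal{P}$ denote the collection of all semiring preorders $\preceq$ on the underlying semiring of $F$ that extend $\le$ and satisfy $x \not\preceq y$. Then $\mathcal{P}$ is nonempty (it contains $\le$ itself) and partially ordered by inclusion of relations. If $(\preceq_\alpha)_\alpha$ is a chain in $\mathcal{P}$, then its union $\preceq$ is again a semiring preorder: reflexivity and monotonicity under addition and multiplication are clear since each condition only involves finitely many pairs, while transitivity follows because any two relations $a \preceq b$ and $b \preceq c$ already lie in a common $\preceq_\alpha$ by directedness of the chain. Moreover $x \not\preceq y$ in the union, since $x \preceq y$ would already hold in some $\preceq_\alpha$. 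By Zorn's lemma, $\mathcal{P}$ has a maximal element, which we again denote by $\preceq$.

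It remains to verify that $\preceq$ is a preordered semifield preorder (i.e.\ that $1 \not\approx 0$ under $\preceq$) and that it is total. For the first point, if we had $1 \approx 0$ under $\preceq$, then monotonicity of multiplication by any element $a \in F$ (valid in any preordered semiring, cf.\ \Cref{preordered_semiring_defn}) would give $a = a \cdot 1 \approx a \cdot 0 = 0$ for every $a$, so that $\preceq$ would be the total relation on $F$, contradicting $x \not\preceq y$. Hence $\preceq$ makes $F$ into a preordered semifield. Totality of $\preceq$ is now precisely the conclusion of \Cref{key} applied to the pair $(x,y)$, whose hypothesis is met by construction of $\preceq$ as a maximal semiring preorder with $x \not\preceq y$.

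The one subtlety worth flagging is the compatibility of the Zorn step with the definition of preordered semifield: one must verify that maximality among semiring preorders suffices (so that \Cref{key} applies), rather than separately imposing $1 \not\approx 0$ throughout the chain. This is handled by the observation above, which shows that $x \not\preceq y$ automatically forces $1 \not\approx 0$; thus no extra work is needed and the proof concludes.
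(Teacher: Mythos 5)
Your proposal is correct and follows essentially the same route as the paper: the easy containment, then Zorn's lemma to produce a preorder maximal subject to $x \not\preceq y$, and finally \Cref{key} to conclude totality. The extra check that $x \not\preceq y$ forces $1 \not\approx 0$ (so the maximal semiring preorder is indeed a semifield preorder) is a correct filling-in of a detail the paper leaves implicit.
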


In other words, for $x,y \in F$ we have $x \le y$ if and only if $x \preceq y$ for every total semifield preorder $\preceq$ on $F$ which extends the given preorder $\le$.

\begin{proof}
	If $x \le y$, then trivially also $x \preceq y$ by the definition of extension. So suppose that $x \not\le y$. Then by Zorn's lemma, we can extend $\le$ to a semifield preorder $\preceq$ which still satisfies $x \not\preceq y$ and is maximal with this property. The claim now follows from \Cref{key}.
\end{proof}

\Cref{abstract_semifield_pss} is interesting already in the case where $F$ is trivially preordered, as follows.

\begin{cor}
	Let $F$ be a strict semifield and $x \neq y$ in $F$. Then there is a total semifield preorder $\le$ on $F$ such that $x < y$.
\end{cor}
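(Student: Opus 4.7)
My plan is to deduce this corollary as the special case of \Cref{abstract_semifield_pss} applied to $F$ equipped with the trivial preorder. So first I would check that the trivial preorder (i.e., equality) really does make $F$ into a preordered semifield in the sense of \Cref{preordered_semifield}: monotonicity of addition and multiplication are automatic since $\le$ then reduces to equality, and the non-degeneracy condition $1 \not\approx 0$ amounts to $1 \neq 0$, which is built into the definition of semifield. The strictness hypothesis on $F$ is not needed for this setup step, but it is needed in order for \Cref{abstract_semifield_pss} to apply.

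Next, I would exploit the symmetry between $x$ and $y$ to pick the convenient direction. Since $x \neq y$, the trivial preorder satisfies $y \not\le x$. Applying \Cref{abstract_semifield_pss} in its contrapositive form, the fact that $y \not\le x$ in the intersection of all total semifield preorder extensions of equality means that there must be at least one total semifield preorder $\preceq$ on $F$ with $y \not\preceq x$. By totality of $\preceq$ we get $x \preceq y$, which together with $y \not\preceq x$ yields the desired $x \prec y$.

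I do not expect any obstacle: the corollary is a direct specialization of \Cref{abstract_semifield_pss}, and the only subtlety is cosmetic, namely which of the two elements to feed into the theorem in order to produce the strict inequality with the correct orientation.
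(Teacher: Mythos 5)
Your proposal is correct and is exactly the argument the paper intends: the corollary is the specialization of \Cref{abstract_semifield_pss} to the trivially preordered strict semifield, where $x \neq y$ gives $y \not\le x$, so some total extension $\preceq$ must have $y \not\preceq x$, and totality then yields $x \prec y$.
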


\begin{proof}
	Apply \Cref{abstract_semifield_pss} with the trivial preorder on $F$.
\end{proof}

\begin{rem}
	\label{semifield_possible}
	In contrast to the situation with fields and ordered fields, every strict semifield has \emph{some} total semifield preorder.
\end{rem}

\section{An abstract Vergleichsstellensatz for preordered semirings}
\label{n1}

We now turn to the consideration of preordered semirings in order to see what conclusions we can draw about these from our previous results on preordered semifields.
The theorem we develop can be considered a non-Archimedean generalization of the classical Positivstellensatz of Krivine--Kadison--Dubois, which we then rederive as a special case.

Throughout this section, $S$ is a preordered semiring with $1 > 0$ and power universal element $u \in S$. Recall that such $S$ is automatically a zerosumfree semidomain (\Cref{nonzeros}).

\subsection*{The test spectrum}

We will have to make some topological considerations on the set of monotone homomorphisms into $\R_+$ and $\TR_+$, or equivalently into arbitrary multiplicatively Archimedean totally preordered semifields.
In the following, we use the multiplicative picture of $\TR_+$.

\begin{defn}
	The \newterm{test spectrum} of $S$ is the disjoint union
	\begin{align*}
		\Sper{S} \: \coloneqq \:	& \{\text{\normalfont{monotone homs }} S \to \R_+ \} \\
					& \sqcup \{\text{\normalfont{monotone homs }} S \to \TR_+ \text{\normalfont{ with }} \phi(u) = e \},
	\end{align*}
	where the first piece is the \newterm{real part} and the second the \newterm{tropical part}, respectively.
\end{defn}

Since exponentiation by any fixed positive real is an automorphism of $\TR_+$, and since $\phi(u) \ge 1$ for any $\phi : S \to \TR_+$, the requirement $\phi(u) = e$ is a choice of normalization. The Euler constant $e$ here could be replaced by any other real $>1$, in which case also the base of the logarithms that we use in the following will have to be replaced accordingly. Fixing the normalization like this also excludes the trivial monotone homomorphism $S \to \TR_+$ given by the composite $S \to \B \to \TR_+$, which maps every nonzero element to $1 \in \TR_+$, from the test spectrum.

\begin{lem}
	\label{phiu}
	For every $\phi \in \Sper{S}$ we have $\phi(u) > 1$.
\end{lem}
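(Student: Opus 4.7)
The plan is to split $\Sper{1}{S}$ into its tropical and real parts and handle each separately. The two tropical parts are immediate from the defining normalization: for $\phi : S \to \TR_+$ one has $\phi(u) = e$, which is strictly greater than $1$, and for $\phi : S \to \TR_+^\op$ one has $\phi(u) = e^{-1}$, which is strictly less than $1$ (in the target's ordering, as required).

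For the real parts I will focus on $\phi : S \to \R_+$; the case $\phi : S \to \R_+^\op$ should be handled symmetrically, by applying the same argument to $S^{\op}$ (and noting that it is anyway vacuous in the natural setting where $0 \le 1$ in $S$, since then no monotone map $S \to \R_+^\op$ with $\phi^{-1}(0) = \{0\}$ can exist). Monotonicity of $\phi$ applied to the relation $1 \le u$, which is built into the definition of power universal, will immediately give $\phi(u) \ge 1$, so it suffices to exclude the possibility $\phi(u) = 1$.

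I plan to argue by contradiction. The key observation is that $\phi(u) = 1$ forces $\phi$ to be constant on every comparable pair of nonzero elements of $S$. Indeed, for nonzero $x \le y$ in $S$, power universality supplies $k \in \N$ with $y \le u^k x$; applying $\phi$ yields $\phi(y) \le \phi(u)^k \phi(x) = \phi(x)$, and combining with the reverse inequality $\phi(x) \le \phi(y)$ from monotonicity forces $\phi(x) = \phi(y)$. Applied to the comparable pair $1 \le 2 = 1+1$ in $S$, this collapse yields $1 = \phi(1) = \phi(2) = 2$ in $\R_+$, the desired contradiction.

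The main delicate point to watch is securing the comparison $1 \le 2$ in $S$; this follows from $0 \le 1$, which is the natural orientation operative in the paper's setting (and which, for $\phi : S \to \R_+$, is anyway compatible with the existence of such a $\phi$, since $1 \le 0$ in $S$ would contradict $\phi$'s monotonicity into $\R_+$). Once this comparability is in place, the contradiction closes immediately.
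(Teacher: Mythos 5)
Your proposal is correct in substance and follows the same overall plan as the paper: the tropical cases are disposed of by the normalization built into the definition of $\Sper{1}{S}$, and the work is in the real case. For $\phi : S \to \R_+$ the paper argues more directly and quantitatively: it picks $k$ with $2 \le u^k$ and concludes $\phi(u) = \phi(u^k)^{1/k} \ge \phi(2)^{1/k} = 2^{1/k} > 1$, whereas you first get $\phi(u) \ge 1$ from $u \ge 1$ and then exclude $\phi(u) = 1$ by contradiction, via the observation that $\phi(u)=1$ together with power universality forces $\phi(x) = \phi(y)$ on every comparable pair of nonzero elements, which collides with the pair $1 \le 2$. Your collapse argument is a perfectly valid (if slightly longer) alternative to the paper's direct exponent bound, and it isolates nicely why power universality is the mechanism behind the strictness.

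The one point to be careful about is exactly the one you flagged: the comparability of the integer constants. The standing hypotheses of this section are only that $S$ is a zerosumfree preordered semidomain with power universal $u$; the orientation $1 \ge 0$ is \emph{not} assumed there (indeed the paper remarks separately on what happens \emph{if} $1 \ge 0$). Your fallback argument only excludes $1 \le 0$ for a monotone $\phi : S \to \R_+$; it does not rule out that $0$ and $1$ are incomparable, in which case $1 \le 2$ is not available. However, this does not put you at a disadvantage relative to the paper: the paper's own step ``choose $k$ with $2 \le u^k$'' is likewise not a formal consequence of \Cref{univ_defn} without some such comparability (the characterization lemma giving $x \le u^k$ for all nonzero $x$ is stated under $1 \ge 0$), and once $1 \ge 0$ is granted the two needs coincide, since $1 \le 2$ plus power universality applied to the nonzero pair $(1,2)$ yields $2 \le u^k$. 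So your proof stands or falls together with the paper's; under the intended reading (e.g.\ $1 \ge 0$, or a power universal $u$ dominating every nonzero element by a power) both close correctly, and your symmetric treatment of the $\R_+^\op$ case is fine.
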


\begin{proof}
	This holds for $\phi : S \to \TR_+$ by definition. For $\phi : S \to \R_+$ we have $\phi(2) = 2$. Choosing $k \in \Nplus$ with $2 \le u^k$ shows that $\phi(u) = \phi(u^k)^{1/k} \ge \phi(2)^{1/k} = 2^{1/k} > 1$.
\end{proof}

We therefore have $\log \phi(u) \neq 0$, guaranteeing in particular that the denominator in the following definition is nonzero.

\begin{defn}
	\label{1test_topology}
	$\Sper{S}$ carries the weakest topology which makes the \newterm{logarithmic evaluation maps}
	\beq
		\label{ev_map}
		\lev_x \: : \: \Sper{S} \longrightarrow [0,\infty), \qquad \phi \longmapsto \frac{\log \phi(x)}{\log \phi(u)}
	\eeq
	continuous for all nonzero $x \in S$.
\end{defn}

The denominator has been chosen such that $\lev_u(\phi) = 1$ for all $\phi \in \Sper{S}$.
Note also that if $x \le u^k$ and $1 \le x u^k$, then $\lev_x$ takes values in $[-k,+k]$.

\begin{rem}
	The tropical part of $\Sper{S}$ is a closed subset, since it contains exactly all of those $\phi$ which satisfy $\lev_2(\phi) = 0$.
	Indeed for $\phi : S \to \TR_+$ we have $\phi(2) = \phi(1) + \phi(1) = 1$, while for $\phi : S \to \R_+$ we have $\phi(2) = 2$.
\end{rem}

\begin{rem}
	The topology of $\Sper{S}$ does not depend on the choice of power universal element $u$. For if $v \in S$ is power universal as well, then the logarithmic evaluation maps associated to $v$ can be written in terms of the ones of $u$ as $\lev_x - \lev_v$ for any nonzero $x$, which is continuous as a difference of two continuous functions, and likewise the other way around.
\end{rem}

Upon equipping the set of continuous functions $C(\Sper{S})$ with the pointwise algebraic structure and pointwise order, we can consider it as a partially ordered ring.
We record two simple properties relating to this structure.

\begin{lem}
	\label{lev_hom}
	For any nonzero $x,y \in S$, we have:
	\begin{enumerate}
		\item If $x \le y$, then also $\lev_x \le \lev_y$.
		\item $\lev_{xy} = \lev_x + \lev_y$.
	\end{enumerate}
\end{lem}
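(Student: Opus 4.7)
The plan is to verify both claims pointwise on $\Sper{1}{S}$, distinguishing the real and tropical pieces only when the sign of $\log \phi(u)$ matters.

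For the multiplicativity claim (b), I would simply note that every $\phi \in \Sper{1}{S}$ is a semiring homomorphism into $\R_+$, $\R_+^\op$, $\TR_+$ or $\TR_+^\op$, and in each of these codomains the multiplication coincides with ordinary multiplication of positive reals. Hence $\log \phi(xy) = \log \phi(x) + \log \phi(y)$, and dividing by the nonzero scalar $\log \phi(u)$ (which is nonzero by \Cref{phiu}) gives $\lev_{xy}(\phi) = \lev_x(\phi) + \lev_y(\phi)$.

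For the monotonicity claim (a), I would split $\Sper{1}{S}$ according to whether the codomain is $\R_+$/$\TR_+$ or $\R_+^\op$/$\TR_+^\op$. If $\phi$ takes values in $\R_+$ or $\TR_+$, then monotonicity of $\phi$ together with $x \le y$ yields $\phi(x) \le \phi(y)$ in the usual order on $[0,\infty)$, hence $\log \phi(x) \le \log \phi(y)$; since $\phi(u) > 1$ in this case by \Cref{phiu}, so that $\log\phi(u) > 0$, dividing preserves the inequality. If instead $\phi$ takes values in $\R_+^\op$ or $\TR_+^\op$, then monotonicity of $\phi : S \to F^\op$ means $\phi(x) \ge \phi(y)$ in $F$, so $\log\phi(x) \ge \log\phi(y)$; but now $\phi(u) < 1$ by \Cref{phiu}, so $\log\phi(u) < 0$, and dividing reverses the inequality to again give $\lev_x(\phi) \le \lev_y(\phi)$.

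Neither step presents any real obstacle: the only mildly delicate point is keeping track of the sign of $\log \phi(u)$ in the antitone cases, which is exactly what the denominator in \eqref{ev_map} was set up to handle. Thus the entire proof reduces to unwinding definitions and applying \Cref{phiu}.
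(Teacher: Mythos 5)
Your proof is correct and matches the paper's intent: the paper dismisses this lemma as ``Straightforward,'' and your argument is precisely the routine unwinding of the definition \eqref{ev_map} together with \Cref{phiu} that is meant. The sign bookkeeping for $\log\phi(u)$ in the $\R_+^\op$/$\TR_+^\op$ cases is exactly the point the normalization was designed for, and you handle it correctly.
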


\begin{proof}
	Straightforward.	
\end{proof}

For many $S$, it is possible to find a sequence of monotone homomorphisms $\phi : S \to \R_+$ which converges to a monotone homomorphism $\phi : S \to \TR_+$ in $\Sper{S}$. This is closely related to the standard construction of $\TR_+$ as the tropical limit~\cite{tropical,LM}, also know as \newterm{Maslov dequantization}. In our setting, Maslov dequantization amounts to the following elementary facts of which we omit the proof.

\begin{lem}
	\label{maslov}
	For $\alpha,\beta \in (-\infty,\infty)$, we have
	\begin{align*}
		\max(e^\alpha, e^\beta)	& = \lim_{r \to \infty} \left(e^{r\alpha} + e^{r\beta}\right)^{1/r},	\\
		\max(\alpha, \beta) \,	& = \lim_{r \to \infty} r^{-1} \log \left(e^{r\alpha} + e^{r\beta} \right).
	\end{align*}
	and both limits are uniform on compact subsets in the parameters $\alpha$ and $\beta$.
\end{lem}

\begin{lem}
	\label{philevadd}
	For nonzero $x,y \in S$ and all real $\phi \in \Sper{S}$, we have
	\beq
		\label{expadd}
		2^{\frac{\lev_{x + y}(\phi)}{\lev_2(\phi)}} = 2^{\frac{\lev_x(\phi)}{\lev_2(\phi)}} + 2^{\frac{\lev_y(\phi)}{\lev_2(\phi)}}.
	\eeq
\end{lem}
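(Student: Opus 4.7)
The plan is to observe that the exponential identity~\eqref{expadd} reduces, after simplification, to the additivity of the homomorphism $\phi$ itself. Since $\phi$ is non-tropical, it is of the form $\phi : S \to \R_+$ or $\phi : S \to \R_+^\op$, and in either case the underlying map is a semiring homomorphism into $\R_+$. In particular, $\phi$ sends the element $2 = 1 + 1 \in S$ to $2 \in \R_+$, regardless of which of the two orientations is in play.

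The first step is to simplify the exponents. For any nonzero $z \in S$ and any non-tropical $\phi$, we have
\[
  \frac{\lev_z(\phi)}{\lev_2(\phi)} \: = \: \frac{\log \phi(z) / \log \phi(u)}{\log \phi(2) / \log \phi(u)} \: = \: \frac{\log \phi(z)}{\log 2},
\]
where the cancellation of $\log \phi(u)$ is legitimate because $\log \phi(u) \neq 0$ by \Cref{phiu}, and the last equality uses $\phi(2) = 2$. Exponentiating base $2$ therefore gives $2^{\lev_z(\phi)/\lev_2(\phi)} = \phi(z)$.

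Applying this identity to each of the three terms in~\eqref{expadd} reduces the claim to
\[
  \phi(x + y) \: = \: \phi(x) + \phi(y),
\]
which holds by the additivity of the semiring homomorphism $\phi$. There is no real obstacle here: the only subtlety is confirming that $\phi(2) = 2$ also in the $\R_+^\op$ case, but this is immediate since $\R_+^\op$ has the same underlying semiring as $\R_+$ and $\phi$ is a semiring homomorphism. The tropical part of $\Sper{1}{S}$ is explicitly excluded by hypothesis, which is essential because exactly there the denominator $\lev_2(\phi)$ vanishes.
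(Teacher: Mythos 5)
Your proof is correct and is exactly the ``straightforward computation'' that the paper leaves implicit: cancelling $\log\phi(u)$ via \Cref{phiu}, using $\phi(2)=2$ to get $2^{\lev_z(\phi)/\lev_2(\phi)}=\phi(z)$, and reducing the identity to additivity of $\phi$. No difference in approach and no gaps.
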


\begin{proof}
	Straightforward computation.
\end{proof}

For us, the main reason for normalizing the logarithmic evaluation maps as in~\eqref{ev_map} is to ensure the following:

\begin{prop}
	\label{chaus}
	$\Sper{S}$ is a compact Hausdorff space.	
\end{prop}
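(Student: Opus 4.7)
The plan is to realize $\Sper{1}{S}$ as a closed subspace of a Tychonoff cube, packaging the logarithmic evaluation maps into a single embedding. First I would use power universality of $u$ to produce, for each nonzero $x \in S$, a natural number $k_x$ with $x \le u^{k_x}$ and $1 \le u^{k_x} x$; together with \Cref{phiu}, this yields the uniform bound $|\lev_x(\phi)| \le k_x$ on all of $\Sper{1}{S}$. Packaging these evaluations gives a well-defined map
\[
    \Phi : \Sper{1}{S} \longrightarrow K := \prod_{x \in S \setminus \{0\}} [-k_x, k_x], \qquad \phi \longmapsto (\lev_x(\phi))_{x \neq 0},
\]
and since the topology on $\Sper{1}{S}$ is by definition the weakest making each $\lev_x$ continuous, $\Phi$ is a topological embedding into the compact Hausdorff space $K$, so that Hausdorffness of $\Sper{1}{S}$ is inherited.

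Next I would check injectivity of $\Phi$ by reconstructing $\phi$ from the tuple $c = (\lev_x(\phi))$. The sign of $c_2$ partitions $\Sper{1}{S}$ into a real positive sector ($c_2 > 0$), a real negative sector ($c_2 < 0$), and a tropical sector ($c_2 = 0$), and within the tropical sector the two subtypes are distinguished by a value such as $c_{u+1}$, which equals $1$ for $\phi : S \to \TR_+$ (since $\phi(u+1) = \max(e, 1) = e$) and $0$ for $\phi : S \to \TR_+^\op$ (since then $\phi(u+1) = \max(e^{-1}, 1) = 1$). Once the type is fixed, $\phi(u)$ is determined either by normalization or by $\phi(u) = 2^{1/c_2}$, and then $\phi(x) = \phi(u)^{c_x}$ recovers $\phi$.

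The main content is to show $\Phi(\Sper{1}{S})$ is closed in $K$, whence compactness follows from Tychonoff. I would decompose the image as $A_+ \cup A_- \cup T_+ \cup T_-$, where $A_\pm$ collects those $c$ with $\pm c_2 > 0$ obeying the real additivity rule
\[
    c_{x+y} \: = \: c_2 \log_2\!\left( 2^{c_x/c_2} + 2^{c_y/c_2} \right)
\]
from \Cref{philevadd}, while $T_+$ collects those $c$ with $c_2 = 0$ obeying $c_{x+y} = \max(c_x, c_y)$ and $T_-$ the analogous set with $c_{x+y} = \min(c_x, c_y)$, all subject to the closed algebraic conditions $c_1 = 0$, $c_u = 1$, $c_{xy} = c_x + c_y$, and the closed monotonicity condition $x \le y \Rightarrow c_x \le c_y$. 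The sets $T_\pm$ are plainly closed in $K$, while \Cref{maslov} shows $\overline{A_+} \subseteq A_+ \cup T_+$, as any limit with $c_2 \to 0^+$ inherits the $\max$ rule; symmetrically $\overline{A_-} \subseteq A_- \cup T_-$. Hence the image equals $\overline{A_+} \cup \overline{A_-} \cup T_+ \cup T_-$, a finite union of closed sets, and is therefore closed.

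The main obstacle is precisely this degeneration of the real additivity rule at $c_2 = 0$: the formula above becomes singular there, and its continuous passage to the tropical $\max$ (as $c_2 \to 0^+$) or $\min$ (as $c_2 \to 0^-$) is exactly what \Cref{maslov} supplies. Once that Maslov dequantization bridge is in place between the real and tropical sectors, the remaining verifications are routine closedness checks of polynomial and order relations on the cube $K$.
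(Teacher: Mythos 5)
Your proof is correct and follows essentially the same route as the paper: bounding the logarithmic evaluations via power universality (and \Cref{phiu}) to embed $\Sper{1}{S}$ into a product of compact intervals, then deducing closedness from the closed multiplicativity/monotonicity/normalization conditions together with \Cref{philevadd} for the real sectors and the locally uniform Maslov limit of \Cref{maslov} at the degeneration $\lev_2 = 0$. The only difference is organizational — you present the image as the finite union of closed sets $\overline{A_+} \cup \overline{A_-} \cup T_+ \cup T_-$, whereas the paper separates each point of the complement by an open neighbourhood — and your identification of this union with the image rests on the same tuple-to-homomorphism reconstruction that the paper's proof also invokes.
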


\begin{proof}
	First an auxiliary statement: using \Cref{maslov}, it is elementary to show that the set $\mathcal{A} \subseteq \R^4$ consisting of all $(\alpha,\beta,\gamma,\delta) \in \R^4$ such that
	\[
		\left[ (\delta > 0) \:\land\: (2^{\gamma/\delta} = 2^{\alpha/\delta} + 2^{\beta/\delta} ) \right] \quad \lor \quad
		\left[ (\delta = 0) \:\land\: (\gamma = \max(\alpha,\beta)) \right]
	\]
	is closed in the Euclidean topology.

	For any nonzero $x \in S$, choose $k_x \in \N$ such that $x \le u^{k_x}$ and $1 \le u^{k_x} x$. Then for all $\phi \in \Sper{S}$,
	\[
		-k_x \le \lev_x(\phi) \le k_x.
	\]
	Thus by definition of the topology, $\Sper{S}$ is a subspace of the compact Hausdorff space $\prod_{x \in S \setminus \{0\}}\, [-k_x,k_x]$. To show that it is closed, it is enough to characterize this subset in terms of closed conditions. We do this by noting that a point $(\nu_x) \in \prod_{x \in S \setminus \{0\}} [-k_x, k_x]$ belongs to $\Sper{S}$ if and only it satisfies the following conditions:
	\begin{enumerate}
		\item $\nu_u = 1$.
		\item $\nu_{xy} = \nu_x + \nu_y$ for all nonzero $x, y \in S$.
		\item $\nu_x \le \nu_y$ for all nonzero $x, y \in S$ with $x \le y$.
		\item $(\nu_x, \nu_y, \nu_{x + y}, \nu_2) \in \mathcal{A}$ for all nonzero $x, y \in S$.
	\end{enumerate}
	Indeed, we have already observed that every $\nu_x \coloneqq \lev_x(\phi)$ satisfies these conditions. Conversely, suppose that $\nu$ satisfies these conditions. If $\nu_2 = 0$, then $\phi(x) \coloneqq e^{\nu_x}$ defines a tropical spectral point. While if $\nu_2 \neq 0$, then $\phi(x) \coloneqq 2^{\nu_x / \nu_2}$ defines a real spectral point.
\end{proof}

\begin{rem}
	\label{tsper_functorial}
	$\Sper{-}$ is functorial as follows. If $f : S \to T$ is a monotone homomorphism of preordered semirings, both of which satisfy our assumptions and such that $f(u) \in T$ is a power universal element of $T$, then composition with $f$ defines a continuous map
	\[
		\Sper{T} \longrightarrow \Sper{S}.
	\]
\end{rem}

\begin{ex}
	\label{polyspectrum}
	Consider the semiring of single-variable polynomials $\N_{(+)}[X]$ with strictly positive constant coefficient (together with the zero polynomial) and with respect to the coefficientwise preorder. Using $u = 2 + X$ as in \Cref{poly_universal}, we can identify the test spectrum with the compact Hausdorff space $[0,\infty]$ as follows:
	\begin{enumerate}
		\item\label{pointeval} The monotone homomorphisms $\N_{(+)}[X] \to \R_+$ are the evaluation maps $p \mapsto p(r)$, parametrized by $r \in [0,\infty)$.
		\item\label{degree} Using the additive picture of $\TR_+$, there is exactly one normalized monotone homomorphism $\N_{(+)}[X] \to \TR_+$, namely the degree map $p \mapsto \deg(p)$.
		\item\label{sper} Identifying the degree map with $\infty$ produces a homeomorphism
			\[
				\Sper{\N_{(+)}[X]} \cong [0,\infty].
			\]
	\end{enumerate}
	Indeed it is clear that the maps in \ref{pointeval} and \ref{degree} are monotone homomorphisms. Conversely, for a given monotone homeomorphism $\phi : \N_{(+)}[X] \to \R_+$ and $n \in \N$, put
	\[
		t_n \coloneqq \phi(1 + X^n) - 1.
	\]
	Then we obtain $\phi(k + \ell \underline{X}) = k + \ell t_n$ for all $k,\ell \in \N$ with $k > 0$ by additivity of $\phi$. Applying this equation results in
	\begin{align*}
		\phi(1 + X^n + X^m + X^{n+m})	& = \frac{1}{3} \left( \phi(1 + 3 X^n) + \phi(1 + 3 X^m) + \phi(1 + 3 X^{n+m}) \right) \\[4pt]
						& = 1 + t_n + t_m + t_{n+m},
	\end{align*}
	as well as
	\begin{align*}
		\phi(1 + X^n + X^m + X^{n+m})	& = \phi(1 + X^n) \phi(1 + X^m) \\[3pt]
						& = (1 + t_n) (1 + t_m),
	\end{align*}
	which together imply that $t_{n+m} = t_n t_m$, and therefore $t_n = t_1^n$ for all $n$. It follows that $\phi(p) = p(t_1)$ for all $p \in \N_{(+)}[X]$, as was to be shown.
	
	For $\psi : \N_{(+)}[X] \to \TR_+$, for any finite $I \subseteq \N$ the value $\psi\left( \sum_{n \in I} p_n X^n \right)$ is independent of the values of the strictly positive coefficients $p_n \in \Nplus$, since $\sum_{n \in I} p_n X^n$ is lower bounded by $\sum_{n \in I} X^n$ and upper bounded by a scalar multiple of it. In other words, $\psi(p)$ only depends on the support $\supp(p)$, which is the set of all $n \in \N$ for which the monomial $X^n$ occurs in $p$. Multiplicativity of $\psi$ shows that with
	\[
		t_n \coloneqq \psi \left( \sum_{j \le n} X^j \right) \in [0,\infty),
	\]
	we have $t_{n + m} = t_n + t_m$, and therefore $t_n = n t_1$. The assumed normalization is $\psi(u) = \psi(2 + X) = 1$, and therefore in fact $t_n = n$. This shows that $\psi$ has the desired form on all polynomials whose support is downward closed. Finally, if $p \in \N_{(+)}[X]$ is arbitrary, then the product $p \sum_{j \le \deg(p)} X^j$ is a polynomial with downward closed support, and applying $\psi$ to it produces $\psi(p) + \deg(p) = 2 \deg(p)$, and therefore implies $\psi(p) = \deg(p)$.

	For the homeomorphism $\Sper{\N_{(+)}[X]} \cong [0,\infty]$, it suffices to show that the bijection is continuous in one direction, since both spaces are compact Hausdorff. It is therefore enough to show that the evaluation maps $p \mapsto p(r)$ converge to the tropical point $\deg$ in the test spectrum topology. This amounts to showing that for every $p \in \N_{(+)}[X]$, we have
	\[
		\deg(p) = \lim_{r \to \infty} \frac{\log p(r)}{\log (2 + r)},
	\]
	which is elementary to verify.
\end{ex}

The situation for polynomials in several variables is more complicated: there are additional monotone homomorphisms $\N_{(+)}[\underline{X}] \to \TR_+$ beyond those arising from optimization over the Newton polytope (\Cref{newtonRplus}), such as the one that sends every polynomial to the highest power of $X_1$ that occurs in it (as a monomial without other variables).
The situation is simpler for the semiring of Laurent polynomials, which we had already touched upon in \Cref{laurent}.

\begin{ex}
	Using $\underline{X} = (X_1,\ldots,X_d)$, consider $\N[\underline{X}, \underline{X}^{-1}]$, the semiring of natural number Laurent polynomials in $d$ variables with respect to the coefficientwise preorder. (All of the following statements apply likewise to the analogously defined $\R_+[\underline{X}, \underline{X}^{-1}]$.) This preordered semiring has a power universal element (\Cref{laurent}).
	We claim that:
	\begin{enumerate}
		\item\label{pointeval2} The monotone homomorphisms $\N[\underline{X}, \underline{X}^{-1}] \to \R_+$ are the evaluation maps $p \mapsto p(e^r)$ indexed by $r \in \R^d$.
		\item\label{degree2} In the additive picture of $\TR_+$, the monotone homomorphisms $\N[\underline{X}, \underline{X}^{-1}] \to \TR_+$ are indexed by $s \in \R^d$, and given by
			\[
				p \longmapsto \max_{\alpha \: \in \: \Newton(p)} \sum_{i=1}^d s_i \alpha_i,
			\]
			where $\Newton(p) \subseteq \R^d$ is the Newton polytope of $p$.
		\item\label{sper2} $\Sper{\N[\underline{X}, \underline{X}^{-1}]}$ is homeomorphic to the closed $d$-dimensional Euclidean ball.
	\end{enumerate}

	Concerning \ref{pointeval2}, we have written the point at which we evaluate as $e^r$ for $r \in \R^d$ in order to make the connection with \ref{degree2} more apparent, which will be relevant for \ref{sper2}.
	To show \ref{pointeval2}, it is enough to note the universal property of $\N[\underline{X}, \underline{X}^{-1}]$ as the semiring freely generated by $d$ invertible variables and it is immediate that these are the evaluation maps at points in $\R^d$ with all coordinates strictly positive. The same argument applies to prove \ref{degree2}. See also \Cref{newtonRplus}.

	For \ref{sper2}, it again suffices to show that the bijection is continuous in one direction. Identifying the ball with the test spectrum as described by \ref{pointeval2} and \ref{degree2} in the obvious way, it is therefore enough to show that every logarithmic evaluation map $\lev_p$ becomes a continuous function on the ball. While continuity at a point in the interior (real part) is obvious, continuity at a point on the boundary (tropical part) follows since the evaluation $p(e^r)$ for $r \to \infty$ is dominated by that monomial which attains the Newton polytope optimization.
\end{ex}

The test spectrum $\Sper{S}$ for given $S$ is not always easy to characterize. This is illustrated also in the following example, which may be of interest in the context of positive polynomials and sums of squares.

\begin{prob}
	\label{sosprob}
	Let $\Sigma_+ \subseteq \R[\underline{X}]$ be the subsemiring of sums of squares plus a strictly positive scalar, preordered such that $p \le q$ if and only if $q - p$ is itself a sum of squares (\Cref{polysos}).
	 Then find a concrete description of all monotone homomorphisms $\Sigma \to \R_+$ and $\Sigma \to \TR_+$.
\end{prob}

Since all the nonzero elements of $\Sigma_+$ do not vanish anywhere, taking the negative degree of vanishing at a point or variety (\Cref{sosvals}) does not give a nontrivial homomorphism $\Sigma \to \TR_+$.

The theory of continuous functions on compact Hausdorff spaces now has some some useful implications.

\begin{lem}
	\label{swapprox}
	Let $C \subseteq \Sper{S}$ be a closed subset and $\phi \in \Sper{S} \setminus C$. Then there are nonzero $x,y \in S$ such that
	\begin{align*}
		\lev_x(\psi) & \le \lev_y(\psi) - 1 \qquad \forall \psi \in C, \\[2pt]
		\lev_x(\phi) & \ge \lev_y(\phi) + 1.
	\end{align*}
\end{lem}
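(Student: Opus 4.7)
My approach is to exploit compactness of $\Sper{1}{S}$ (\Cref{chaus}) to reduce the separation problem to finitely many inequalities coming from the definition of the topology, and then to combine them into a single inequality of the required form using the arithmetic of $\Frac(S)$.

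\emph{Step 1 (extracting a basic open neighborhood).} Since $C$ is closed in the compact Hausdorff space $\Sper{1}{S}$, its complement is an open neighborhood of $\phi$. By the definition of the topology on $\Sper{1}{S}$, there exist nonzero $x_1,\ldots,x_n\in S$ and open intervals $(a_i,b_i)\subseteq \R$ with $\lev_{x_i}(\phi)\in(a_i,b_i)$ for all $i$ such that the basic open set $\bigcap_{i=1}^n \lev_{x_i}^{-1}((a_i,b_i))$ is disjoint from $C$. After slightly shrinking each interval and choosing rational endpoints, I raise every $x_i$ to a common positive integer power $q$ to clear denominators; this lets me arrange $a_i,b_i\in\Z$ with the strict containment $a_i<\lev_{x_i}(\phi)<b_i$, while for each $\psi\in C$ some index $j$ satisfies $\lev_{x_j}(\psi)\leq a_j$ or $\lev_{x_j}(\psi)\geq b_j$. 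By upper semicontinuity on the compact set $C$, this strictness becomes uniform: there exists $\delta'>0$ with $\min_{i,\epsilon}\ell_{\xi_i^\epsilon}(\psi)\leq-\delta'$ for all $\psi\in C$, where I set $\xi_i^+ := x_i u^{-a_i}$ and $\xi_i^- := u^{b_i} x_i^{-1}$ in $\Frac(S)^\times$, so that $\ell_{\xi_i^+}=\lev_{x_i}-a_i$ and $\ell_{\xi_i^-}=b_i-\lev_{x_i}$.

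\emph{Step 2 (combining into a single separator).} The elements $\xi_i^\pm$ collectively separate $\phi$ from $C$ in the sense that $\ell_{\xi_i^\pm}(\phi)\geq\delta>0$ uniformly in $i,\pm$, while for each $\psi\in C$ at least one of them is $\leq -\delta'$. To merge these into a single function, I consider
\[
\zeta \,:=\, \Bigl(\,\sum_{i,\epsilon\in\{+,-\}}(\xi_i^\epsilon)^{-N}\Bigr)^{-1} \in \Frac(S)^\times
\]
for a large integer $N$ to be chosen. The elementary soft-max estimate $\max_k s_k \leq \log\sum_k e^{s_k} \leq \max_k s_k+\log m$ applied to $s_k := -N\log\phi'(\xi_k)$ yields, after dividing by $\log\phi'(u)$ (which is bounded away from $0$ uniformly over $\phi'\in\Sper{1}{S}$, since $\lev_2$ is bounded on $\Sper{1}{S}$), the uniform approximations
\[
\ell_\zeta(\phi') = N\min_{i,\epsilon}\ell_{\xi_i^\epsilon}(\phi') + O(1) \quad\text{(monotone $\phi'$)}, \qquad \ell_\zeta(\phi') = N\max_{i,\epsilon}\ell_{\xi_i^\epsilon}(\phi') + O(1) \quad\text{(antitone $\phi'$)},
\]
where the $O(1)$ terms have a bound depending only on $n$ and the uniform lower bound on $|\log\phi'(u)|$.

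\emph{Step 3 (tuning parameters and extracting $x,y$).} On the monotone part of $\Sper{1}{S}$ the first formula gives $\ell_\zeta(\phi) \geq N\delta - O(1) \geq 1$ for $N$ large, and $\ell_\zeta(\psi)\leq -N\delta'+O(1) \leq -1$ for all monotone $\psi\in C$ simultaneously. The antitone case is handled by multiplying $\zeta$ by an additional factor of the form $2^M u^{-k}$ (if $\phi$ is monotone) or its reciprocal (if $\phi$ is antitone), exploiting that $\ell_{2^M u^{-k}}=M\lev_2-k$ is very negative on the antitone part but non-negative at a monotone $\phi$ for appropriate $k$; by symmetric reasoning (replacing $S$ by $S^{\op}$ if $\phi$ is antitone), one reduces to the monotone-$\phi$ case. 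After this balancing, after raising to one more integer power to increase the gap, and after an integer shift $\zeta\mapsto \zeta u^m$ to bring the thresholds to $\pm1$, the resulting element of $\Frac(S)^\times$ can be written as $x/y$ with nonzero $x,y\in S$, and these are the desired elements.

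\emph{Main obstacle.} The genuine difficulty is in Step 2–3: addition in $\Frac(S)$ corresponds to a max-like operation on the monotone part of $\Sper{1}{S}$ and a min-like operation on the antitone part, and \emph{no} operation in $\Frac(S)^\times$ realizes either of these uniformly across the entire spectrum. One therefore cannot simply mimic a Stone--Weierstrass lattice argument; instead one must carefully balance several scaling parameters—the exponent $N$, the shift integer $m$, the multiplicative factor involving $\lev_2$, and an outer power—to ensure that the required strict inequalities hold simultaneously on both the monotone and the antitone parts of $C$. The fact that $\log\phi'(u)$ is bounded away from $0$ uniformly on $\Sper{1}{S}$ (a consequence of our normalization of the tropical part and of $\lev_2$ being bounded) is what keeps the $O(1)$ correction terms under control and makes the balancing possible.
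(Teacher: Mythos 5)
Your overall strategy (finite basic neighbourhood, soft-min element $\zeta=(\sum_{i,\epsilon}(\xi_i^\epsilon)^{-N})^{-1}$, then parameter balancing) is genuinely different from the paper, which instead reduces to $\Frac(S)$ and proves that every continuous function on $\Sper{1}{S}$ is approximable in the sense $\|nf-\lev_x\|_\infty<n\eps$ via the lattice Stone--Weierstrass theorem and the Maslov limit, and then separates $C$ from $\phi$ by an Urysohn function. The problem is that the crucial part of your argument --- Step 3 --- is exactly the part you do not carry out, and the specific recipe you sketch does not work. You need the correction factor $2^Mu^{-k}$ to satisfy three things at once: its logarithmic evaluation $M\lev_2-k$ must be at least about $k\approx N\sup_{\psi\in C\cap A}\max_{i,\epsilon}\ell_{\xi_i^\epsilon}(\psi)$ in magnitude and negative on the antitone part $A$ of $C$ (which it is, since $\lev_2\le 0$ there), it must not destroy positivity at $\phi$, and it must not destroy negativity on the monotone part of $C$. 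If $\phi$ is a tropical point, $\lev_2(\phi)=0$, so the factor contributes exactly $-k$ at $\phi$ for every $M$, and since $k$ must grow like $N$ times a bound that has no relation to your $\delta$, the inequality $\ell_{\zeta'}(\phi)\ge N\delta-O(1)-k\ge 1$ fails in general; "for appropriate $k$" cannot rescue this. Even when $\phi$ is a real monotone point, restoring positivity at $\phi$ forces $M\gtrsim k/\lev_2(\phi)$, and then on real monotone points $\psi\in C$ the same factor contributes up to $M\lev_2(\psi)$, which can be as large as $Mk_0$ (with $2\le u^{k_0}$) and thus overwhelms $-N\delta'$ there, since $\lev_2$ is in no way constant across the real part. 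The remaining knobs you invoke --- an outer integer power and a shift $\zeta\mapsto\zeta u^m$ --- rescale or translate all spectral points by the same amount and therefore cannot repair a sign pattern that is wrong on one clopen piece of $C$ and right on another. So the "main obstacle" you correctly identify (addition realizes $\max$ of the $\lev$'s on the monotone part and $\min$ on the antitone part, with no single operation doing either globally) is not actually overcome by the proposal; the balancing is asserted, not proved, and the natural attempt to instantiate it fails.

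There is also a smaller, fixable slip in Step 1: from "for each $\psi\in C$ some $j$ satisfies $\lev_{x_j}(\psi)\le a_j$ or $\ge b_j$" you cannot conclude a uniform bound $\min_{i,\epsilon}\ell_{\xi_i^\epsilon}\le-\delta'$ on $C$; a continuous function that is merely $\le 0$ on a compact set need not be bounded away from $0$. You need to shrink twice: pick a basic open neighbourhood of $\phi$ disjoint from $C$, then choose strictly smaller closed intervals $[a_i,b_i]$ whose box is still disjoint from $C$, so that the inequalities on $C$ are strict and compactness then yields $\delta'>0$. With that repaired, Steps 1--2 are fine (the soft-max estimate and the uniform bound on $|\log\phi'(u)|$ via \Cref{phiu} are used correctly), but the argument still stands or falls with the unproved balancing in Step 3, which is where the real content of the lemma lies.
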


\begin{proof}
	Upon replacing $S$ by $\Frac(S)$ if necessary, and using that $x \mapsto \lev_x$ takes multiplication to addition, we can assume without loss of generality that $S$ is a semifield.
	Indeed under this assumption, we will construct nonzero $z$ such that
	\beq
		\label{zseparate}
		\lev_z(\phi) \ge 1, \qquad \lev_z(\psi) \le -1 \quad \forall \psi \in C,
	\eeq
	which is enough upon writing $z = \frac{x}{y}$ for nonzero $x,y \in S$. Moreover, upon replacing the power universal element $u \in S$ by a power of $u$ if necessary, we can assume $u \ge 2$.
	
	In order to prove that \eqref{zseparate} can be achieved, we will apply the lattice version of the Stone--Weierstra\ss{} theorem~\cite[Theorem~II.7.29]{HS} to the set
	\[
		\mathcal{L} \coloneqq \{ f : \Sper{S} \to \R \text{ continuous} \mid \forall \eps > 0 \: \exists n \in \Nplus, x \in S \setminus \{0\} : \: \| n f - \lev_x \|_\infty < n \eps \}.
	\]
	It is easy to see that $\mathcal{L}$ is closed under addition, under scalar multiplication (using rational approximation, and inverses for negative scalars), contains $1 = \lev_u$ and is closed in the supremum norm $\|\cdot\|_\infty$. Since it trivially contains all $\lev_x$ and these separate points, the lattice version of the Stone--Weierstra\ss{} theorem shows that it is enough to prove that $\mathcal{L}$ is closed under max in order to conclude that $\mathcal{L} = C(\Sper{S})$. 

	So let $f,g \in \mathcal{L}$ and consider $\max(f,g) : \Sper{S} \to \R$. By assumption we have nonzero $x,y \in S$ and $n \in \N$ such that
	\[
		\| n f - \lev_x \|_\infty < n \eps, \qquad \| n g - \lev_y \|_\infty < n \eps,
	\]
	where we can assume that $n$ is the same for both by using the smallest common multiple if necessary (and replacing the relevant elements of $S$ by the corresponding powers). By the Maslov dequantization of \Cref{maslov} and boundedness of the logarithmic evaluation functions, we have
	\[
		\left\| \max\left( \frac{\lev_x}{\lev_2}, \frac{\lev_y}{\lev_2} \right) - k^{-1} \log_2 \left( 2^{k \frac{\lev_x}{\lev_2}} + 2^{k \frac{\lev_y}{\lev_2}} \right) \right\|_\infty < \eps
	\]
	for all natural $k \gg 1$ on the entire real part of the test spectrum (characterized by $\lev_2 \neq 0$). Therefore by \Cref{philevadd}, on the real part we have
	\[
		\left\| \max\left( \frac{\lev_x}{\lev_2}, \frac{\lev_y}{\lev_2} \right) - k^{-1} \frac{\lev_{x^k + y^k}}{\lev_2} \right\|_\infty < \eps.
	\]
	for all $k \gg 1$. Since $\lev_2 \le \gamma$ for some constant $\gamma > 0$, we can also write this as
	\[
		\left\| k \max \left( \lev_x, \lev_y \right) - \lev_{x^k + y^k} \right\|_\infty < k \eps \gamma
	\]
	for all $k \gg 1$, which now holds on all of $\Sper{S}$ since the left-hand side even vanishes on the tropical part. We fix any large enough $k$ for which this holds.
	With $z \coloneqq x^k + y^k$ and the above $n$, we therefore get upon combining this with the previous inequalities,
	\[
		\left\| nk \max(f,g) - \lev_{z^n} \right\| < (2 + \gamma) nk \eps,
	\]
	which is enough to conclude $\max(f,g) \in \mathcal{L}$.

	In conclusion, by $\mathcal{L} = C(\Sper{S})$, we now know that for every continuous $f : \Sper{S} \to \R$ and every $\eps > 0$ there are $n \in \Nplus$ and nonzero $x \in S$ such that
	\[
		|n f(\psi) - \lev_x(\psi)| < n \eps
	\]
	for all $\psi \in \Sper{S}$. Now since every compact Hausdorff space is completely regular, there is $f : \Sper{S} \to [-2,+2]$ such that $f(\psi) = -2$ for all $\psi \in C$ and $f(\phi) = +2$. Taking $\eps \coloneqq 1$ in the approximation statement implies that we have $n \in \Nplus$ and nonzero $z \in S$ with
	\[
		\lev_z(\phi) \ge +n, \qquad \lev_z(\psi) \le -n \quad \forall \psi \in C,
	\]
	This in particular gives the desired \eqref{zseparate} by using $n \ge 1$.
\end{proof}

\subsection*{The Vergleichsstellensatz}

We can now formulate and prove the desired result for preordered semirings.
Later in this section, we will rederive the classical Positivstellensatz of Krivine--Kadison--Dubois from it (\Cref{kkd}). Hence a good way to think about this result is that it is a non-Archimedean generalization of Krivine--Kadison--Dubois.

\begin{thm}
	\label{simpler1}
	Let $S$ be a preordered semiring with $1 \ge 0$ and a power universal element $u$, and let $x, y \in S$ nonzero.
	Then the following are equivalent:
	\begin{enumerate}
		\item\label{newpos} $\phi(x) \le \phi(y)$ for all monotone homomorphisms $\phi : S \to \R_+$, and all $\phi : S \to \TR_+$ with $\phi(u) > 1$.
		\item\label{newpower} For every $\eps > 0$, we have
			\[
				x^n \le u^{\lfloor \eps n \rfloor} y^n
			\]
			for all $n \gg 1$.
	\end{enumerate}
	Moreover, suppose that $\phi(x) < \phi(y)$ for all such $\phi$. Then also the following hold:
	\begin{enumerate}[resume]
		\item\label{asymp1} There is $k \in \N$ such that
			\[
				u^k x^n \le u^k y^n \qquad \forall n \gg 1.
			\]
		\item\label{asymp2} If $y$ is power universal as well, then
			\[
				x^n \le y^n \qquad \forall n \gg 1.
			\]
		\item\label{catal} There is nonzero $a \in S$ such that
			\[
				a x \le a y.
			\]
			Moreover, there is $k \in \N$ such that $a \coloneqq u^k \sum_{j=0}^n x^j y^{n-j}$ does the job for any $n \gg 1$.
	\end{enumerate}
\end{thm}

\begin{proof}
	We assume $1 > 0$ without loss of generality.
	By \Cref{nonzeros}, we know that such $S$ is a zerosumfree semidomain. Hence $\Frac(S)$ exists and is a preordered semifield with $1 > 0$ and a power universal element, which we also denote by $u$.

	The implication from \ref{newpower} to \ref{newpos} is straightforward based on the fact that every monotone homomorphism $\phi : S \to \R_+$ or $\phi : S \to \TR_+$ has trivial kernel.
	For the remaining claims, we first prove the following weaker version:
	\begin{enumerate}
		\setcounter{enumi}{5}	
		\item\label{newaux} If $\phi(x) < \phi(y)$ for all $\phi \in \Sper{S}$, then there is nonzero $a \in S$ with $a x \le a y$.
	\end{enumerate}
	By \Cref{frac_order_embed}, it is enough to prove that $x \le y$ holds in $\Frac(S)$. By \Cref{abstract_semifield_pss}, it is moreover enough to consider the case that $\Frac(S)$ is totally preordered (as a semifield). Note that $u$ is still power universal in any such total extension; this is where the assumption $1 > 0$ enters via the universality criterion of \Cref{univ_old}.
	But then \Cref{arch_truncate} produces a monotone homomorphism $\phi : \Frac(S) \to \mathbb{K}$ with $\mathbb{K} \in \{\R_+, \TR_+\}$ such that $\phi(u) > 1$.
	In the tropical case, $\phi(u) > 1$ and replacing $\phi$ by a suitable power of itself achieves the normalization $\phi(u) = e$, and hence $\phi(u) \in \Sper{S}$.
	Now if we had $x > y$, then we would obtain $\phi(x) \ge \phi(y)$, contradicting the assumed $\phi(x) < \phi(y)$.
	This establishes \ref{newaux}.

	Now assuming \ref{newpos} and fixing $\eps > 0$ as in \ref{newpower}, we choose a positive rational $\frac{\ell}{m} < \frac{\eps}{3}$ and consider $\tilde{x} \coloneqq x^m$ and $\tilde{y} \coloneqq u^\ell y^m$. Then $\phi(\tilde{x}) < \phi(\tilde{y})$ for all $\phi \in \Sper{S}$, and therefore \ref{newaux} gives us nonzero $a \in S$ such that $a \tilde{x} \le a \tilde{y}$.
	Upon chaining inequalities, this implies
	\[
		a \tilde{x}^j \le a \tilde{y}^j
	\]
	for all $j \in \Nplus$. Choosing $k \in \N$ such that $a \le u^k$ and $1 \le a u^k$, we obtain upon also plugging in the definitions of $\tilde{x}$ and $\tilde{y}$,
	\[
		x^{mj} \le a u^k x^{mj} \le a u^{k + \ell j} y^{mj} \le u^{2 k + \ell j} y^{mj}.
	\]
	Since $\frac{\ell}{m} < \frac{\eps}{3}$, this results in
	\[
		x^{mj} \le u^{\lfloor \frac{\eps m j}{2} \rfloor} y^{mj}
	\]
	for all $j \gg 1$, which is an inequality of the desired form. Choosing another $k$ such that $x \le y u^k$, we obtain further
	\[
		x^{mj+1} \le u^{\lfloor \frac{\eps m j}{2} \rfloor + k} y^{mj+1} \le u^{\lfloor \eps m j \rfloor} y^{mj+1},
	\]
	where the second inequality is for sufficiently large $j$. Fixing such $j$ and weakening a bit further, we therefore have
	\[
		x^{mj} \le u^{\lfloor \eps m j \rfloor} y^{mj}, \qquad x^{mj+1} \le u^{\lfloor \eps (m j + 1) \rfloor} y^{mj+1}.
	\]
	By multiplying powers of these two inequalities, and using that every sufficiently large natural number is a sum of positive integer multiples of $mj$ and $mj+1$, we therefore obtain $x^n \le u^{\lfloor \eps n \rfloor} y^n$ for all $n \gg 1$, as was to be shown.

	We now assume $\phi(x) < \phi(y)$ for all $\phi \in \Sper{S}$ and prove the remaining items \ref{asymp1}--\ref{catal}. 
	For \ref{asymp1}, we use that $\lev_x, \lev_y : \Sper{S} \to \R_+$ are continuous real-valued functions on a compact Hausdorff space by \Cref{chaus}. Since the function $\phi \longmapsto \lev_y(\phi) - \lev_x(\phi)$ is strictly positive by assumption, the compactness implies that it is even bounded away from zero by some $\eps > 0$. Plugging in the definition of $\lev_x$ and $\lev_y$ lets us write this in the form
	\[
		\frac{\phi(x)}{\phi(y)} < \phi(u)^{-\eps} \qquad \forall \phi \in \Sper{S}.
	\]
	Upon choosing positive rational $\frac{\ell}{m} < \eps$, we therefore obtain
	\[
		\phi(x) < \phi(u)^{-\frac{\ell}{m}} \phi(y)
	\]
	for all $\phi$, or equivalently $\phi(\tilde{x}) < \phi(\tilde{y})$ with $\tilde{x} \coloneqq u^\ell x^m$ and $\tilde{y} \coloneqq y^m$. Applying \ref{newpower} with $\eps \coloneqq \ell$, we can find $k,n \in \Nplus$ with $k < \ell n$ and such that $\tilde{x}^n \le u^k \tilde{y}^n$. Equivalently,
	\[
		u^{\ell n} x^{mn} \le u^k y^{mn}.
	\]
	To summarize and reinitialize symbols, we therefore can find large enough $k$ and $m$ such that
	\beq
		\label{asymp_strong}
		u^k x^m \le u^{k-1} y^m.
	\eeq
	This can be weakened to $u^k x^m \le u^k y^m$, showing that the desired inequality holds for \emph{some} exponent. Continuing, let $n$ be such that $x \le y u^n$. Then we obviously have $u^{kn} x^{mn} \le u^{kn} y^{mn}$ by the previous, but also
	\[
		u^{kn} x^{mn + 1} \le u^{(k+1)n} x^{mn} y \le u^{kn} y^{mn + 1},
	\]
	which differs from $u^{kn} x^{mn} \le u^{kn} y^{mn}$ only in having one larger exponent of $x$ and $y$, respectively. Now since every sufficiently large natural number is a sum of positive integer multiples of $mn$ and $mn + 1$, we finally obtain the claim, since the set of all $j \in \N$ for which 
	\[
		u^{kn} x^j \le u^{kn} y^j
	\]
	holds is closed under addition.

	To prove \Cref{asymp2}, we choose $u \coloneqq y$ and use \eqref{asymp_strong} in the form
	\[
		y^k x^m \le y^k y^{m - 1}
	\]
	for suitably large $k$ and $m$. To bring this into a simpler form, note that $k$ can always be increased by multiplying by a power of $y$, while $m$ can be replaced by a multiple of it by chaining and using $y \ge 1$. We can therefore assume $m = k$ for simplicity, giving
	\[
		y^k x^k \le y^{2k - 1},
	\]
	which we know to hold for infinitely many $k$. Using chaining by induction on $j$, we get
	\[
		y^k x^{jk} \le y^{2k + (j-1)(k-1)}
	\]
	for all $j\in\Nplus$. Since $y \ge 1$, we can weaken this to
	\[
		x^{jk} \le y^{2k + (j-1)(k-1)} \le y^{jk},
	\]
	where the second inequality holds for $j \gg 1$. This proves the desired inequality for some exponent. Choosing $j$ large enough so that the difference of exponents $\ell \coloneqq jk - [2k + (j-1)(k-1)]$ is large enough for $x \le y^\ell$ to be the case, we moreover obtain
	\[
		x^{jk + 1} \le x^{jk} y^\ell \le y^{2k + (j-1)(k-1) + \ell} \le y^{jk + 1}.
	\]
	The claim that the desired inequality holds for all sufficiently large exponents now follows by the same reasoning as for \ref{asymp1} in the previous paragraph.

	Finally, we derive the particular form of $a$ in \ref{catal} from \ref{asymp1}. The following argument is essentially~\cite[Lemma~5.4]{ocm} and generalizes \Cref{geom_fourier}. The assumed inequality in the form $u^k x^{n+1} \le u^k y^{n+1}$ implies
	\begin{align*}
		\left( u^k \sum_{\ell=0}^n x^\ell y^{n-\ell} \right) x	& = u^k \left( \sum_{\ell=0}^{n-1} x^{\ell+1} y^{n-\ell} + x^{n+1} \right) \\
									& \le u^k \left( y^{n+1} + \sum_{\ell=1}^n x^\ell y^{n+1-\ell} \right) \\
									& = \left( u^k \sum_{\ell=0}^n x^\ell y^{n-\ell} \right) y,
	\end{align*}
	as was to be shown.
\end{proof}

We record the application of this result to the semiring of Laurent polynomials.
Other applications, such as the one to representation theory sketched in the introduction, are going to be considered elsewhere~\cite{rep_app}.

\begin{ex}
	\label{ex_laurent}
	Suppose that $p, q \in \R_+[\underline{X}, \underline{X}^{-1}]$ are nonzero Laurent polynomials such that:
	\begin{itemize}
		\item $p(r) < q(r)$ for all $r \in \R_{> 0}^d$.
		\item The Newton polytope of $p$ is contained in the interior of the Newton polytope of $q$. 
	\end{itemize}
	Then, using $u \coloneqq 1 + \sum_i (X_i + X_i^{-1})$ as in \Cref{laurent}, we have:
	\begin{itemize}
		\item There is another nonzero Laurent polynomial $a \in \R_+[\underline{X}, \underline{X}^{-1}]$ such that
			\[
				a p \le a q
			\]
			holds coefficientwise. Moreover, there is $k \in \N$ such that $a \coloneqq u^k \sum_{j=0}^n p^j q^{n-j}$ does the job for every $n \gg 1$.
		\item There is $k \in \N$ such that
			\[
				u^k p^n \le u^k q^n
			\]
			holds coefficientwise for all $n \gg 1$.
	\end{itemize}
	It is worth noting that for Laurent polynomials and the coefficientwise preorder in particular, necessary and sufficient conditions for the existence of nonzero $a$ with $a p \le a q$ have been derived by Handelman, who uses the traditional formulation in terms of ordered rings~\cite[Theorem~V.6.C]{handelman}. Of course, our result applies much more generally, and it is not surprising that more specific information can be obtained in particular cases like this one. (Which is not to say that doing so is easy.)
\end{ex}

\subsection{An extension theorem for monotone homomorphisms}

We use \Cref{simpler1} in order to prove a result on extending monotone homomorphisms to $\R_+$ and $\TR_+$, taking the form of an injectivity property for these two preordered semifields.

\begin{thm}
	\label{extensionthm}
	Let $S$ be a preordered semiring with $1 \ge 0$ and a power universal element $u$, and let $T \subseteq S$ be a subsemiring with $u \in T$ and carrying the induced preorder.
	Then every monotone homomorphism of the form
	\[
		\phi : T \to \R_+, \qquad \textrm{or} \qquad \phi : T \to \TR_+
	\]
	can be extended to a monotone homomorphism on $S$ (with the same codomain).
\end{thm}

\begin{proof}
	This amounts to proving that the continuous map $\Sper{S} \to \Sper{T}$ is surjective. Since $\Sper{S}$ is compact, its image in $\Sper{T}$ is a closed subspace. Suppose that there is $\psi \in \Sper{T}$ not in the image. We can then apply \Cref{swapprox} to obtain nonzero $x,y \in T$ such that
	\begin{align*}
		\lev_x(\phi) & \le \lev_y(\phi) - 1 \qquad \forall \phi \in \Sper{S}, \\[2pt]
		\lev_x(\psi) & \ge \lev_y(\psi) + 1.
	\end{align*}
	Applying \Cref{simpler1} together with the first inequality shows in particular that there is nonzero $a \in S$ with $a x \le a y$; the particular form of $a$ given in \ref{catal} shows that we can even assume $a \in T$. But then by monotonicity and preservation of multiplication by the logarithmic evaluation maps, we also get
	\[
		\lev_x(\psi) \le \lev_y(\psi),
	\]
	in contradiction with the above.
\end{proof}

We now present a first theoretical application of this extension theorem towards a better understanding of test spectra. With $u \in S$ a power universal element, we write $\N[u]$ for the polynomial semiring $\N[X]$ equipped with the semiring preorder in which
\[
	p \le q \qquad \Longleftrightarrow \qquad p(u) \le q(u).
\]
\Cref{univ_old} shows that $u$ is still a power universal element in $\N[u]$. Moreover, $\Sper{\N[u]}$ is a closed subset of $[0,\infty]$, where $\infty$ corresponds to the (not necessarily monotone) degree homomorphism $\N[u] \to \TR_+$.

We now present three lemmas which concern the interaction between the real part and the tropical part of $\Sper{S}$, and on how this interaction relates to the structure of $\N[u]$.
The first lemma is concerned with when the real part and the tropical part of $\Sper{S}$ are disconnected.

\begin{lem}
	\label{sper_disconnect}
	The following are equivalent:
	\begin{enumerate}
		\item\label{disconnect} The real and tropical part of $\Sper{S}$ are disconnected.
		\item\label{realclosed} The real part of $\Sper{S}$ is closed.
		\item\label{uboundvalue} The values $\phi(u)$ are bounded as $\phi : S \to \R_+$ varies.
		\item\label{realcompact} The set of monotone homomorphisms $\phi : S \to \R_+$ is compact in the weak topology induced by the evaluation maps $x \mapsto \phi(x)$.
		\item\label{uboundsquare} There is $n \in \N$ such that for every $\ell \in \N$ and $\phi \in \Sper{S}$,
			\[
				\phi(\ell u) \le \phi(\ell n + u^2).
			\]
		\item\label{Nualternative} The degree homomorphism $\N[u] \to \TR_+$ is either not monotone or an isolated point of $\Sper{\N[u]}$.
	\end{enumerate}
\end{lem}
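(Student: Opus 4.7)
The plan is to organize the proof as the straightforward equivalence (a) $\iff$ (b), followed by a cycle among the remaining conditions (b) $\iff$ (c) $\iff$ (d), then (c) $\iff$ (e), and finally (c) $\iff$ (f). The unifying idea is that all of these conditions express, in different languages (topological, analytic, algebraic, spectral), the single phenomenon that the value $\phi(u)$ cannot escape to $+\infty$ along a sequence of real homomorphisms. For (a) $\iff$ (b): since the tropical part of $\Sper{1}{S}$ is cut out by the closed condition $\lev_2 = 0$ (noted after \Cref{1test_topology}) and $\Sper{1}{S}$ is compact Hausdorff (\Cref{chaus}), the partition into real and tropical parts is a disjoint union in which the tropical part is always closed; disconnectedness of the two is then equivalent to the real part also being closed.

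For (b) $\iff$ (c) $\iff$ (d), the key tool is Maslov dequantization (\Cref{maslov}): a net of real $\phi_\alpha$ whose $\phi_\alpha(u)$ tends to $\infty$ while the normalized $\lev_x(\phi_\alpha)$ stabilize has a tropical limit in $\Sper{1}{S}$, and conversely every approach to the tropical part forces $\phi_\alpha(u) \to \infty$. Hence (b) $\Rightarrow$ (c) by contrapositive, while for (c) $\Rightarrow$ (b) a net of real $\phi_\alpha$ converging in $\Sper{1}{S}$ with $\phi_\alpha(u) \le M$ has uniformly bounded values $\phi_\alpha(x) \le M^{k(x)}$, so after passing to a subnet the limit is again a real homomorphism. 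The same bound shows (c) $\Rightarrow$ (d) by embedding the real part into $\prod_x [0, M^{k(x)}]$, and (d) $\Rightarrow$ (c) is simply continuity of evaluation at $u$ on a weakly compact set.

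For (c) $\iff$ (e), I would just compute: under any real $\phi$ the inequality becomes $\ell(\phi(u) - n) \le \phi(u)^2$, which holds for all $\ell \in \N$ exactly when $\phi(u) \le n$; under any tropical $\phi$ it reads $e \le e^2$ using $\phi(\ell) = 1$ for $\ell \ge 1$ in $\TR_+$, so it is automatic. Hence (e) forces the bound $\phi(u) \le n$ on the real part, and conversely (c) with $n := \lceil M \rceil$ implies (e). For (c) $\iff$ (f), by \Cref{extensionthm} the restriction map $\Sper{1}{S} \to \Sper{1}{\N[u]}$ is surjective, and by \Cref{polyspectrum} the only candidate tropical point in $\Sper{1}{\N[u]}$ is the degree homomorphism $d$. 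Using $\lev_{1+X}(\phi) = \log(1 + \phi(X))/\log \phi(X)$, a real $\phi \in \Sper{1}{\N[u]}$ approaches $d$ iff $\phi(X) \to \infty$. Assuming (c), if $d$ is monotone, any real net converging to $d$ in $\Sper{1}{\N[u]}$ would lift by \Cref{extensionthm} to real $\psi_\alpha$ on $S$ with $\psi_\alpha(u) \to \infty$, contradicting (c); so $d$ is isolated. Conversely, failure of (c) yields $\psi_n : S \to \R_+$ with $\psi_n(u) \to \infty$, whose restrictions to $\N[u]$ converge to $d$, placing $d$ in $\Sper{1}{\N[u]}$ as a non-isolated point and so violating (f).

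The main obstacle I anticipate is the careful topological argument for (b) $\iff$ (c), specifically arranging the Maslov limit so that it produces a genuine tropical spectral point in $\Sper{1}{S}$ and not merely a limit in some auxiliary product space; the normalization $\phi(u) = e$ on the tropical side combined with the logarithmic normalization of $\lev_x$ is designed precisely to make this work, and one has to track this carefully. A secondary subtlety is verifying in (c) $\iff$ (f) that $d$ really is the unique candidate tropical point in $\Sper{1}{\N[u]}$ rather than in the finer $\Sper{1}{\N_{(+)}[X]}$, which is where the surjectivity furnished by \Cref{extensionthm} plays an essential role.
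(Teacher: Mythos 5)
Your proposal is correct and uses essentially the same ingredients as the paper's proof: compactness of $\Sper{1}{S}$ together with the behaviour of $\lev_2$ (equivalently $\phi(u)$) to relate \ref{realclosed}, \ref{uboundvalue}, \ref{realcompact}, the direct evaluation computation for \ref{uboundsquare}, and the reduction to $\N[u]$ via \Cref{polyspectrum} and \Cref{extensionthm} for \ref{Nualternative}. The only difference is organizational — you prove each condition equivalent to the boundedness condition \ref{uboundvalue} as a hub, whereas the paper runs a single cycle \ref{disconnect}$\Leftrightarrow$\ref{realclosed}$\Rightarrow$\ref{uboundvalue}$\Rightarrow$\ref{realcompact}$\Rightarrow$\ref{uboundsquare}$\Rightarrow$\ref{Nualternative}$\Rightarrow$\ref{disconnect} — and this does not change the substance of the argument.
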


\begin{proof}
	Since the tropical part of $\Sper{S}$ is closed, the equivalence of \ref{disconnect} and \ref{realclosed} is by definition.

	Condition \ref{realclosed} is equivalent to the statement that the set of all $\phi \in \Sper{S}$ which satisfy $\lev_2(\phi) \neq 0$ is closed. Since closedness implies compactness, \ref{realclosed} implies that there is $\eps > 0$ such that $\lev_2(\phi) > \eps$ for all $\phi : S \to \R_+$. Using this together with the definition $\lev_2(\phi) = \frac{\log 2}{\log \phi(u)}$ produces a bound of the desired form.

	From \ref{uboundvalue} to \ref{realcompact}, it is enough to show that the values $\phi(x)$ are bounded as $\phi : S \to \R_+$ varies for every fixed $x \in S$. Since $x \le u^k$ for sufficiently large $k \in \N$, this follows from the assumption together with the properties of monotone homomorphisms. The converse from \ref{realcompact} to \ref{uboundvalue} is clear.

	From \ref{realcompact} to \ref{uboundsquare}, we can thus assume \ref{uboundvalue}. But then there already is $n \in \N$ such that $\phi(u) \le \phi(n)$ for all $\phi : S \to \R_+$, and we anyway have $\phi(\ell u) = \phi(u) \le \phi(u^2)$ for $\phi : S \to \TR_+$. Thus the desired inequality follows in both cases.

	By \Cref{polyspectrum} and $u \ge 1$, the test spectrum $\Sper{\N[u]}$ is a closed subset of $[1,\infty]$. Thus upon assuming \ref{uboundsquare}, the only way in which \ref{Nualternative} can fail is if the evaluation map
	\[
		\N[u] \longrightarrow \R_+, \qquad p \longmapsto p(r)
	\]
	is monotone for all sufficiently large $r \in (0,\infty)$. But then choosing any such $r > n$, we get
	\[
		\ell r > \ell n + r^2
	\]
	for sufficiently large $\ell$, contradicting the assumption.

	Finally assuming \ref{Nualternative}, the disconnectedness property of \ref{disconnect} clearly holds for $\N[u]$. It then follows for $S$ itself from \Cref{extensionthm}.
\end{proof}

The second lemma is concerned with when $\Sper{S}$ does not have a tropical part at all.

\begin{lem}
	\label{notroplem}
	The following are equivalent:
	\begin{enumerate}
		\item\label{notropical} There is no monotone homomorphism $S \to \TR_+$ other than the trivial one given by $S \to \B \to \TR_+$.
		\item\label{ubound} There is $n \in \N$ such that $\phi(u) \le \phi(n)$ for all $\phi \in \Sper{S}$.
		\item\label{nodegree} The degree homomorphism $\N[u] \to \TR_+$ is not monotone.
		\item\label{decreasedegree} There is $p \in \N[u]$ such that
			\[
				u^{\deg(p) + 1} \le p(u)
			\]
			in $S$.
	\end{enumerate}
\end{lem}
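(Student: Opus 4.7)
The plan is to prove the chain of implications $(1) \Rightarrow (2) \Rightarrow (3) \Rightarrow (4) \Rightarrow (1)$.

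For $(1) \Rightarrow (2)$: Since $1 \ge 0$ rules out homomorphisms to $\R_+^\op$ and $\TR_+^\op$, the hypothesis says $\Sper{1}{S}$ consists only of homomorphisms $\phi : S \to \R_+$. For each such $\phi$ we have $\phi(u) > 1$ by \Cref{phiu}, hence $\lev_2(\phi) = \log 2 / \log \phi(u)$ is a strictly positive real. Since $\lev_2$ is continuous on the compact space $\Sper{1}{S}$ (\Cref{chaus}), it is bounded below by some $\eps > 0$, giving $\log \phi(u) \le \log 2 / \eps$ uniformly. Taking any $n \ge 2^{1/\eps}$ then yields $\phi(u) \le n = \phi(n)$ for every $\phi \in \Sper{1}{S}$.

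For $(2) \Rightarrow (3)$: I argue contrapositively. If the degree map $d : \N[u] \to \TR_+$ were a monotone homomorphism, then $d(u) = e$ in the multiplicative picture, which matches our chosen normalization. Since $u \in \N[u] \subseteq S$ (in the sense of \Cref{strictify}), \Cref{extensionthm} extends $d$ to a monotone homomorphism $\phi : S \to \TR_+$ with $\phi(u) = e$, i.e., an element of the tropical part of $\Sper{1}{S}$. But $\phi(n) = 1$ in $\TR_+$ for every $n \in \N$ (since addition is $\max$), so $\phi(u) = e > 1 = \phi(n)$ for all $n$, contradicting (2).

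For $(3) \Rightarrow (4)$: If $d$ is not monotone, there exist $p, q \in \N[u]$ with $p \le q$ in $\N[u]$ (i.e., $p(u) \le q(u)$ in $S$) but $\deg(p) > \deg(q)$. Set $\ell := \deg(q)$, so $\deg(p) \ge \ell+1$. Write $p = \sum_i p_i X^i$ with some $p_k \ge 1$ for $k \ge \ell+1$. Since $u \ge 1$ in $S$ (power universal) and all $p_i \ge 0$, evaluating gives $p(u) \ge u^k \ge u^{\ell+1}$ in $S$. Chaining with $p(u) \le q(u)$ yields $u^{\ell+1} \le q(u)$ with $\deg(q) \le \ell$, which is exactly (4) with $q$ playing the role of $p$.

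For $(4) \Rightarrow (1)$: Suppose for contradiction that $\phi : S \to \TR_+$ is a monotone homomorphism; after rescaling the $\TR_+$-exponent we may assume $\phi(u) = e$ (the trivial map factoring through $\B$ is excluded by any normalization $\phi(u) > 1$, as is implicit in $\Sper{1}{S}$). Applying $\phi$ to $u^{\ell+1} \le p(u)$ yields, in the multiplicative picture,
\[
e^{\ell+1} \; = \; \phi(u)^{\ell+1} \; \le \; \phi(p(u)) \; = \; \max_{i \,:\, p_i \ne 0} \phi(p_i) \, \phi(u)^i \; = \; \max_{i \,:\, p_i \ne 0} e^i \; \le \; e^\ell,
\]
using that $\phi(p_i) = 1$ in $\TR_+$ for every positive integer $p_i$. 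This contradiction establishes (1). No step here should be the main obstacle, since the hard compactness work is packaged in \Cref{chaus,extensionthm}; the only subtlety is being careful about the trivial $\B$-valued homomorphism in the final step, which the normalization convention for the tropical part of $\Sper{1}{S}$ cleanly sidesteps.
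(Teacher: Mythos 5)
Your proof is correct and rests on the same essential ingredients as the paper's --- the compactness of $\Sper{1}{S}$ and the extension theorem (\Cref{extensionthm}) --- but it organizes them a bit differently. The paper uses condition (a) as a hub: (a)$\Leftrightarrow$(b) is obtained by citing \Cref{sper_disconnect}, whose proof contains exactly the compactness/boundedness argument you inline for your (a)$\Rightarrow$(b) step; (a)$\Rightarrow$(c) uses \Cref{extensionthm} just as you do (you place it in the contrapositive of (b)$\Rightarrow$(c), which is the same idea); (c)$\Rightarrow$(a) is handled by restriction together with \Cref{polyspectrum}; and (c)$\Leftrightarrow$(d) is the same degree manipulation you give. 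Your single cycle (a)$\Rightarrow$(b)$\Rightarrow$(c)$\Rightarrow$(d)$\Rightarrow$(a) avoids both \Cref{sper_disconnect} and \Cref{polyspectrum}: the former is replaced by the direct compactness argument, the latter by the explicit tropical evaluation of $u^{\ell+1} \le p(u)$ in your last step, which is arguably more self-contained since it needs no classification of monotone homomorphisms on polynomial semirings. Two minor points, neither a gap by the paper's own standard: when applying \Cref{extensionthm} you tacitly pass from $\N[u]$ (only \emph{equivalent} to the subsemiring of $S$ generated by $u$) to that actual subsemiring --- harmless, since a monotone homomorphism on $\N[u]$ identifies $\approx$-equivalent elements and hence descends to the image, and the paper glosses over the same point; and your explicit exclusion of the trivial homomorphism $S \to \B \to \TR_+$ in the final step is the correct reading of condition (a), which always admits that trivial map when $1 \ge 0$ and which the paper's own proof of (b)$\Rightarrow$(a) and (c)$\Rightarrow$(a) also implicitly sets aside.
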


\begin{proof}
	From \ref{notropical} to \ref{ubound}, we know that the values $\phi(u)$ are bounded by \Cref{sper_disconnect}\ref{uboundvalue}, which is enough by $\phi(n) = n$. The converse is clear since $\phi(n) = \phi(1) < \phi(u)$ for tropical $\phi$.

	Assuming \ref{notropical}, we obtain \ref{nodegree} by \Cref{extensionthm}: if the degree homomorphism was monotone, then it would have to extend to a tropical point $\phi \in \Sper{S}$. The converse is clear by restriction to $\N[u]$ and \Cref{polyspectrum}.
	
	Finally, \ref{decreasedegree} clearly implies \ref{nodegree}. Conversely if the degree homomorphism $\N[u] \to \TR_+$ is not monotone, then there must be $p, q \in \N[X]$ with $q(u) \le p(u)$ and $\deg(q) > \deg(p)$. But since $u^{\deg(q)} \le q(u)$, because in particular the leading coefficient is in $\Nplus$, property~\ref{decreasedegree} now follows as $u^{\deg(p) + 1} \le u^{\deg(q)} \le q(u) \le p(u)$.
\end{proof}

The third lemma considers the question of when the real part of $\Sper{S}$ is dense. For any $x, y \in S$, we consider the preordered semiring $\N[u,x,y]$ defined in the same way as $\N[u]$.

\begin{lem}
	\label{denselem}
	The following are equivalent:
	\begin{enumerate}
		\item\label{realdenseS} The real part of $\Sper{S}$ is dense.
		\item\label{realdenseNux} For every $x, y \in S$, the real part of $\Sper{\N[u,x,y]}$ is dense.
	\end{enumerate}
\end{lem}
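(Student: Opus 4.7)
The plan is to deduce both implications from functoriality of $\Sper{1}{-}$ applied to the inclusion $\N[u,x,y] \hookrightarrow S$. By \Cref{tsper_functorial} (using that $u$ remains power universal in $\N[u,x,y]$ since the preorder there is inherited from $S$), this yields a continuous restriction map
\[
r : \Sper{1}{S} \longrightarrow \Sper{1}{\N[u,x,y]},
\]
which is surjective by the extension theorem \Cref{extensionthm}. Note that $r$ sends real points to real points and tropical points to tropical points: restricting a homomorphism $S \to \R_+$ (resp.\ $S \to \TR_+$) to a subsemiring containing $u$ yields a homomorphism with the same codomain and, because $u$ is mapped to the normalizing value, one still lands in the respective half of the $1$-test spectrum.

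For the forward direction, I would assume that the real part $R_S \subseteq \Sper{1}{S}$ is dense and fix arbitrary $x, y \in S$. Since $\Sper{1}{S}$ and $\Sper{1}{\N[u,x,y]}$ are compact Hausdorff by \Cref{chaus}, the continuous surjection $r$ is a quotient map; in particular, for any nonempty open $U \subseteq \Sper{1}{\N[u,x,y]}$ the preimage $r^{-1}(U)$ is nonempty and open in $\Sper{1}{S}$, hence meets $R_S$. Applying $r$ and using $r(R_S) \subseteq R_{\N[u,x,y]}$ shows that $U$ meets $R_{\N[u,x,y]}$, giving density.

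For the reverse direction, I would argue contrapositively: suppose $R_S$ is not dense, and pick $\phi \in \Sper{1}{S} \setminus \overline{R_S}$. Then \Cref{swapprox}, applied to the closed set $\overline{R_S}$, produces nonzero $x, y \in S$ with
\[
\lev_x(\psi) \le \lev_y(\psi) - 1 \text{ for all } \psi \in \overline{R_S}, \qquad \lev_x(\phi) \ge \lev_y(\phi) + 1.
\]
Now consider the restriction $\phi|_{\N[u,x,y]} \in \Sper{1}{\N[u,x,y]}$. For every real point $\tau \in R_{\N[u,x,y]}$, \Cref{extensionthm} extends $\tau$ to some $\tilde{\tau} \in R_S$; applying the first inequality to $\tilde{\tau}$ and using that $\lev_x, \lev_y$ depend only on the values on $\N[u,x,y]$ gives $\lev_x(\tau) \le \lev_y(\tau) - 1$. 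Since $\lev_x - \lev_y \le -1$ is a closed condition on $\Sper{1}{\N[u,x,y]}$, it persists on $\overline{R_{\N[u,x,y]}}$, while $\phi|_{\N[u,x,y]}$ strictly violates it. Hence $\phi|_{\N[u,x,y]} \notin \overline{R_{\N[u,x,y]}}$, and the real part of $\Sper{1}{\N[u,x,y]}$ fails to be dense, negating (b).

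There is no serious obstacle here; the essential content is packaged in the prior separation result \Cref{swapprox} together with the extension theorem \Cref{extensionthm}, and the only point to be careful about is the observation that the real/tropical splitting is preserved by $r$, which is built into the definitions.
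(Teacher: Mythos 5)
Your proposal is correct and follows essentially the same route as the paper: the reverse direction is verbatim the paper's argument (apply \Cref{swapprox} to the closure of the real part and transfer the separation to $\N[u,x,y]$ via \Cref{extensionthm}), and your forward direction is just a topological rephrasing—using surjectivity of the restriction map $r$ from \Cref{extensionthm} plus its continuity from \Cref{tsper_functorial}—of the paper's lift-and-restrict argument for tropical points. No gaps; the remark that the quotient-map property of $r$ is not actually needed (surjectivity suffices) is the only superfluous step.
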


\begin{proof}
	Assuming \ref{realdenseS}, condition \ref{realdenseNux} follows by extending any $\psi : \N[u,x,y] \to \TR_+$ to $S \to \TR_+$ via \Cref{extensionthm}, approximating that extension by a real point, and restricting the latter back to $\N[u,x,y]$.

	Conversely, we assume \ref{realdenseNux} and derive \ref{realdenseS} using proof by contradiction. If $\psi : S \to \TR_+$ is not in the closure of the real part, then by \Cref{swapprox} we have nonzero $x,y \in S$ such that
	\begin{align*}
		\lev_x(\phi) & \le \lev_y(\phi) - 1 \qquad \forall \phi : S \to \R_+, \\[2pt]
		\lev_x(\psi) & \ge \lev_y(\psi) + 1.
	\end{align*}
	But then again since every monotone homomorphism $\N[u,x,y] \to \R_+$ can be extended to $S$, it follows that the same separation holds with respect to $\N[u,x,y]$ as well, implying that the restriction of $\psi$ to $\N[u,x,y]$ is not contained in the closure of the real part of $\Sper{\N[u,x,y]}$.
\end{proof}

\begin{ex}
	\label{polydense}
	By \Cref{laurent}, the real part of $\Sper{\N[\underline{X}, \underline{X}^{-1}]}$ is dense, and similarly for $\R_+[\underline{X}, \underline{X}^{-1}]$.
\end{ex}

\section{Rederiving existing Vergleichsstellens\"atze and Positivstellens\"atze}

\subsection*{Strassen's Vergleichsstellensatz and its generalizations}

We now show how Strassen's Vergleichsstellensatz~\cite[Corollary~2.6]{strassen}, in the generalized form given by Zuiddam~\cite[Theorem~2.2]{zuiddam}, is a consequence of \Cref{simpler1}.
We do so by restricting to the case where $u \coloneqq 2$ is a power universal element.

\begin{thm}
	\label{sps}
	Let $S$ be a preordered semiring with
	$1 \ge 0$, and such that for every nonzero $x \in S$ there is $\ell \in \Nplus$ such that
	\[
		x \le \ell, \qquad 1 \le \ell x.
	\]
	Then for nonzero $x, y \in S$, the following are equivalent:
	\begin{enumerate}
		\item\label{spos} $\phi(x) \le \phi(y)$ for all monotone homomorphisms $\phi : S \to \R_+$.
		\item\label{spower} For every $\eps > 0$, we have
			\[
				x^n \le 2^{\lfloor \eps n \rfloor} y^n
			\]
			for all $n \gg 1$.
	\end{enumerate}
	Moreover, suppose that $\phi(x) < \phi(y)$ for all such $\phi$. Then also the following hold:
	\begin{enumerate}[resume]
		\item\label{sasymp1} There is $k \in \N$ such that
			\[
				2^k x^n \le 2^k y^n \qquad \forall n \gg 1.
			\]
		\item\label{sasymp2} If $y \ge 1$ is such that $y^\ell \ge 2$ for some $\ell$, then also
			\[
				x^n \le y^n \qquad \forall n \gg 1.
			\]
		\item\label{scatal} There is nonzero $a \in S$ such that
			\[
				a x \le a y.
			\]
			Moreover, there is $k \in \N$ such that $a \coloneqq 2^k \sum_{j=0}^n x^j y^{n-j}$ does the job for any $n \gg 1$.
	\end{enumerate}
\end{thm}

Modulo inconsequentially weaker assumptions\footnote{Strassen and Zuiddam make the slightly stronger assumption that the homomorphism $\N \hookrightarrow S$ must be an order embedding rather than merely $1 \ge 0$. This is not necessary, since the statement trivially holds without it: if $m + 1 \le m$ in $S$ for some $m \in \N$, then $\Sper{S} = \emptyset$, so that the assumption of the theorem is satisfied for any nonzero $x, y \in S$. And indeed the conclusion always holds too: we have $x \le \ell y$ for sufficiently large $\ell \in \N$, which implies $x^n \le m x^n$ for all $n$ because of $\ell^n \le m$.}, the equivalence of \ref{spos} and \ref{spower} is exactly Zuiddam's version of Strassen's Vergleichsstellensatz. In the next and final subsection below, we will show how this result can also be used to rederive the classical Positivstellensatz of Krivine--Kadison--Dubois.

As another indication of the strength of Strassen's Vergleichsstellensatz, it is worth noting that our additional statements \ref{sasymp1}--\ref{scatal} follow directly from the equivalence of \ref{spos} and \ref{spower} without any further algebraic considerations. It is enough to specialize the relevant parts of our proof of \Cref{simpler1} to $u = 2$.

We now turn to our generalization of Strassen's Vergleichsstellensatz developed in~\cite{our_spss}, stating it in a slightly simpler but manifestly equivalent form which allows for scalar multiplication by positive rationals\footnote{In order to recover the original version, replace $S$ by $S \otimes \Q_+$, as in the proof of \Cref{quasicompl_pss}.}, and then reproving it from \Cref{simpler1}.

\begin{thm}[{\cite[Theorem~2.12]{our_spss}}]
	\label{spss}
	Let $S$ be a preordered semiring with $\Q_+ \subseteq S$ and $1 \ge 0$ and having a power universal element $u \in S$. Then for nonzero $x,y \in S$, the following are equivalent:
	\begin{enumerate}
		\item\label{oldpos} $\phi(x) \le \phi(y)$ for all monotone homomorphisms $\phi : S \to \R_+$.
		\item\label{oldadd} For every $r \in \R_+$ and $\eps > 0$, there exist a polynomial $p \in \Q_+[X]$ and nonzero $a \in S$ such that $p(r) \le \eps$ and
			\[
				a x \le a(y + p(u)).
			\]
		\item\label{oldcat} For every $r \in \R_+$ and $\eps > 0$, there exist a polynomial $p \in \Q_+[X]$ and nonzero $a \in S$ such that $p(r) \le 1 + \eps$ and
			\[
				a x \le p(u)\, a y.
			\]
		\item\label{oldpower} For every $r \in \R_+$ and $\eps > 0$, there exist a polynomial $p \in \Q_+[X]$ and $n \in \Nplus$ such that $p(r) \le (1 + \eps)^n$ and
			\[
				x^n \le p(u)\, y^n.
			\]
	\end{enumerate}
\end{thm}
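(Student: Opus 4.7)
The plan is to separate the equivalence into the easy direction, where each of \ref{oldadd}, \ref{oldcat}, \ref{oldpower} implies \ref{oldpos}, and the substantive direction, where \ref{oldpos} implies each of the other three. The substantive direction reduces in every case to \Cref{simpler1} after a mild perturbation of $y$ by a small multiple of a large power of the power universal element $u$, so that the strict inequalities $\phi(x) < \phi(\tilde{y})$ required by \Cref{simpler1} hold for every $\phi \in \Sper{1}{S}$.

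For \ref{oldpos} $\Rightarrow$ \ref{oldcat}, given $r \in \R_+$ and $\eps > 0$, I would take $p(X) := 1 + c X^d$ for suitable $c \in \Qplus$ and $d \in \Nplus$, with $c r^d \le \eps$ to guarantee $p(r) \le 1 + \eps$, and set $\tilde{y} := p(u) y$. For every monotone $\phi : S \to \R_+$, the assumption $\phi(x) \le \phi(y)$ together with strict positivity of $c \phi(u)^d$ yields $\phi(x) < \phi(\tilde{y})$. For every monotone $\phi : S \to \TR_+$ one has $\phi(c) = 1$, hence $\phi(\tilde{y}) = \phi(u)^d \phi(y) = e^d \phi(y)$; the ratio $\phi(x)/\phi(y)$ is uniformly bounded by compactness of the $1$-test spectrum (\Cref{chaus}), so $d$ can be chosen large enough independently of $\phi$ to secure the strict inequality. \Cref{simpler1}\ref{catal} then produces a nonzero $a \in S$ with $a x \le a\, p(u) y = p(u)\, a y$, which is \ref{oldcat}. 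The implication \ref{oldpos} $\Rightarrow$ \ref{oldadd} proceeds analogously with $p(X) := c X^d$ and $\tilde{y} := y + p(u)$, once again invoking \Cref{simpler1}\ref{catal}. For \ref{oldpos} $\Rightarrow$ \ref{oldpower}, I would use the same perturbation $\tilde{y} := (1 + c u^d) y$ as for \ref{oldcat} but apply instead \Cref{simpler1}\ref{asymp1}, which yields $k \in \N$ with $u^k x^n \le u^k (1 + c u^d)^n y^n$ for all $n \gg 1$. Using $u \ge 1$ to bound $x^n \le u^k x^n$ and setting $q_n(X) := X^k (1 + c X^d)^n \in \Q_+[X]$ gives $x^n \le q_n(u) y^n$; the bound $q_n(r) \le (1+\eps)^n$ then follows for $n$ sufficiently large by first choosing $c$ so that $1 + c r^d \le 1 + \eps/2$ and then letting the exponential factor $((1+\eps)/(1+\eps/2))^n$ dominate the constant $r^k$.

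The reverse implications are immediate: given any monotone $\phi : S \to \R_+$, applying $\phi$ to the respective algebraic inequality and using that $\phi(a) > 0$ (which follows from $a \ne 0$ and power universality of $u$) lets one cancel $\phi(a)$ and set $r := \phi(u)$; this yields $\phi(x) \le \phi(y) + \eps$, $\phi(x) \le (1+\eps)\phi(y)$, or $\phi(x) \le (1+\eps)\phi(y)$ respectively in the three cases, and sending $\eps \to 0$ yields \ref{oldpos}. The main obstacle throughout is the uniform treatment of tropical homomorphisms in the forward direction: tropical $\phi$ collapse positive rational scalars to $1$ (so additive perturbations by $c u^d$ appear multiplicative in the tropical picture) and a single exponent $d$ must dominate the tropical excess $\lev_x - \lev_y$ across all $\phi$ simultaneously. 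The compactness of $\Sper{1}{S}$ furnished by \Cref{chaus} is exactly what supplies this uniformity.
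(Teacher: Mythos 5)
Your proposal is correct and takes essentially the same route as the paper, whose entire proof consists of noting that the easy implications are straightforward and that the hard direction follows by applying \Cref{simpler1}. Your write-up simply supplies the details the paper leaves implicit: perturbing $y$ to $(1+c u^d)y$ or $y + c u^d$ so that the inequalities become strict on all of $\Sper{1}{S}$ (with $d$ chosen uniformly over the tropical part via \Cref{chaus}, or directly from power universality of $u$, and $c$ chosen afterwards to control $p(r)$), and then invoking \Cref{simpler1}\ref{catal} respectively \ref{asymp1}.
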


The polynomials $p$ that appear in the following proof have a very specific form which is considerably simpler than the one that can be extracted from our earlier proof in~\cite{our_spss}.

\begin{proof}
	It is again straightforward to derive \ref{oldpos} from either of the other assumptions. So we assume \ref{oldpos}, and also assume without loss of generality that the $r$ and $\eps$ appearing in each goal are positive rational. We also fix $k \in \Nplus$ with $x \le u^{k-1}$ and $y u^k \ge 1$, so that in particular $x \le y u^{2k}$.

	We then obtain \ref{oldadd} upon applying \Cref{simpler1} to $\tilde{x} \coloneqq x$ and $\tilde{y} \coloneqq y + p(u)$ with $p \coloneqq \eps r^{-k} X^k$, since for any tropical $\phi$ we have
	\[
		\phi(\tilde{x}) < \phi(u) \phi(u^{k-1}) \le \phi(y) + \phi(u^k) = \phi(y + \eps r^{-k} u^k) = \phi(\tilde{y}),
	\]
	and clearly $\phi(\tilde{x}) < \phi(\tilde{y})$ by the assumption~\ref{oldadd}. Hence \Cref{simpler1} implies the desired $a \tilde{x} \le a \tilde{y}$.
	For \ref{oldcat}, we similarly use $\tilde{y} \coloneqq p(u) y$ with $p \coloneqq 1 + \eps r^{-2k} X^{2k}$.
	For \ref{oldpower}, we use the same $\tilde{y}$ and $\tilde{x}$ and apply \Cref{simpler1}\ref{newpower}, which implies \ref{oldpower} with $2\eps$ in place of $\eps$.
\end{proof}

There is also a generalization of Strassen's Vergleichsstellensatz due to Vrana~\cite[Theorem~2]{vrana}, but we have so far not been able to obtain it directly\footnote{i.e.~without proceeding through the route of Vrana's own proof.} from \Cref{simpler1}.

\subsection*{A Vergleichsstellensatz with quasi-complements}

We end the paper with a few observations on the implications of Strassen's Vergleichsstellensatz in the form of \Cref{sps}, noting that it implies the classical Positivstellensatz of Krivine--Kadison--Dubois.
Recall from \Cref{quasicompl_defn} that a semiring $S$ has quasi-complements if for every $a \in S$ there are $b \in S$ and $n \in \N$ with $a + b = n$. 

\begin{thm}
	\label{quasicompl_pss}
	Let $S$ be a preordered semiring with quasi-complements and $1 \ge 0$, and $x, y \in S$ nonzero.
	Then the following are equivalent:
	\begin{enumerate}
		\item\label{qc_geomnonstrict} $\phi(x) \le \phi(y)$ for all monotone homomorphisms $\phi : S \to \R_+$.
		\item\label{qc_algnonstrict} For every $n \in \Nplus$ there is $\ell \in \Nplus$ such that
			\[
				\ell n x \le \ell ( n y + 1 ).
			\]
	\end{enumerate}
	Moreover, suppose that $\phi(x) < \phi(y)$ for all such $\phi$. Then also:
	\begin{enumerate}[resume]
		\item\label{qc_algstrict} There is $\ell \in \Nplus$ such that
				\beq
					\label{qc_ineq}
					\ell (x + 1) \le \ell (y + 1).
				\eeq
	\end{enumerate}
\end{thm}

By chaining inequalities, \eqref{qc_ineq} automatically implies $\ell (n x + 1) \le \ell (n y + 1)$ for all $n \in \N$.

\begin{proof}
	The implication from \ref{qc_algnonstrict} to \ref{qc_geomnonstrict} is straightforward as usual, so we focus on the other claims.

	Assuming $1 > 0$ without loss of generality, it is convenient to replace $S$ by $S \otimes \Q_+$, by which we mean the preordered semiring whose elements are equivalence classes of formal fractions $\frac{x}{m}$ with $m \in \Nplus$ and $x \in S$, where the equivalence relation is $\frac{x}{m} \sim \frac{y}{n}$ if and only if there is $\ell \in \Nplus$ with $\ell n x = \ell m y$ in $S$. This becomes a semiring with respect to the obvious addition and multiplication induced from $S$, and a preordered semiring with respect to
	\[
		\frac{x}{m} \le \frac{y}{n} \quad :\Longleftrightarrow \quad \exists \ell \in \Nplus : \: \ell n x \le \ell m y.
	\]
	It is then straightforward to see that $S$ has the desired properties if and only if $S \otimes \Q_+$ does, since in particular $S \otimes \Q_+$ still has quasi-complements.
	We therefore can assume without loss of generality that we have a homomorphism $\Q_+ \to S$, or in other words that scalar multiplication by arbitrary positive rationals is available.

	Moreover, we can replace the given preorder on $S$ by
	\[
		a \preceq b \quad :\Longleftrightarrow \quad a + 1 \le b + 1.
	\]
	This is easily seen to be a semiring preorder again, where the monotonicity of multiplication by some other element $c \in S$ relies on the existence of a quasi-complement for $c$.
	Since the claim holds for $(S,\le)$ if and only if it holds for $(S,\preceq)$ and the latter semiring is order cancellative (by quasi-complements again), we can assume without loss of generality that $S$ is order cancellative.

	Under these two extra assumption, we now prove that $\phi(x) < \phi(y)$ for all $\phi$ implies that $x \le y + \eps$ for all $\eps \in \Qplus$, which we will then sharpen at the end.
	Consider the subsemiring
	\[
		S_+ \coloneqq \{ r + a  \mid r \in \Qplus, \, a \in S \} \cup \{0\},
	\]
	equipped with the induced preorder. Then $S_+$ is a preordered semiring with $1 > 0$ and the power universal element $u \coloneqq 2$, since $a + b = n$ shows in particular that $a \le n$, and every nonzero $a \in S_+$ is already lower bounded by a positive scalar by construction. Upon adding $1$ to the given elements $x$ and $y$ if necessary, we can assume $x,y \in S_+$.

	Thus Strassen's Vergleichsstellensatz in the form of \Cref{spss} applies, and $\phi(x) < \phi(y)$ for all $\phi$ implies that we have nonzero $a \in S_+$ with $a x \le a y$. By choosing a quasi-complement and rescaling, we have $a + b = 1$ for suitable $b \in S_+$. Suppose $k \in \Nplus$ is such that $a,b \ge k^{-1}$. Then by $a + b = 1$ and order cancellativity, we also get $a,b \le \frac{k-1}{k}$. The standard telescoping argument for the geometric series shows that for every $m \in \N$,
	\[
		a \sum_{i=0}^m b^i + b^{m+1} = 1.
	\]
	Thus with $c \coloneqq a \sum_{i=0}^m b^i$, we get $c x \le c y$ and
	\[
		\left( 1 - \left(\frac{k-1}{k}\right)^{m+1} \right) \le c \le 1.
	\]
	Therefore also
	\[
		\left( 1 - \left(\frac{k-1}{k}\right)^{m+1} \right) x \le c x \le c y \le y
	\]
	for every $m$. Since the fraction is $< 1$, taking $m \gg 1$ proves that $(1 - \eps) x \le y$ for every $\eps \in \Qplus$, or equivalently $x \le (1 + \eps) y$. Since $y$ is upper bounded by a constant, this implies the desired $x \le y + \eps$ after a suitable adjustment to $\eps$.
	
	Thus assuming merely $\phi(x) \le \phi(y)$ for all $\phi$, we can apply the above statement with $y + \eps$ in place of $y$ and conclude $x \le y + 2 \eps$, which proves \ref{qc_algnonstrict}.

	Finally if $\phi(x) < \phi(y)$ for all $\phi$, then we can appeal to compactness of $\Sper{S}$ again in order to find $\eps \in \Qplus$ satisfying also $\phi(x + \eps) < \phi(y)$, so that we get $x + \eps \le y + \eps$. The assumed order cancellativity then produces the desired $x \le y$.
\end{proof}

This implies in particular the following version of the Positivstellensatz of Krivine--Kadison--Dubois in the form given by Becker and Schwartz~\cite{BS}.

\begin{cor}
	\label{kkd}
	Let $R$ be a ring together with a subsemiring $R_+ \subseteq R$ such that
	\[
		R = \N - R_+,
	\]
	and a subset $M \subseteq R$ such that
	\[
		1 \in M, \qquad M + M \subseteq M, \qquad R_+ M \subseteq M.
	\]
	Then the following are equivalent for every $x \in R$:
	\begin{enumerate}
		\item $\phi(x) \ge 0$ for all homomorphisms $\phi : R \to \R$ satisfying $\phi(M) \subseteq \R_+$.
		\item For all $n \in \N$ there is $\ell \in \Nplus$ such that $\ell(1 + n x) \in M$.
	\end{enumerate}
\end{cor}

The assumption $R = \N - R_+$ means exactly that the positive cone $R_+ \subseteq R$ (sometimes also called a \emph{preprime}) is Archimedean in the usual sense of real algebra.

\begin{proof}
	This follows from an application of \Cref{quasicompl_pss} with $S \coloneqq R_+$, equipped with the semiring preorder in which $a \le b$ if and only if $b - a \in M$. Then $S$ has quasi-complements due to the Archimedeanicity assumption $R = \N - R_+$.
\end{proof}

Although a bit convoluted, our arguments thus establish that the Krivine--Kadison--Dubois theorem can be derived in an elementary way from Strassen's Vergleichsstellensatz. Note that Strassen's original proof of the latter had made use of the former~\cite{strassen}. In any case, all of this underlines that our \Cref{simpler1} can be thought of as a non-Archimedean generalization of these results.

\newpage
\bibliographystyle{plain}
\bibliography{local_global_principle}

\end{document}